\def\margin_comment#1{\marginpar{\sffamily{\tiny #1\par}\normalfont}}
\tikzset{
  LabelStyle/.style = {minimum width = 2em, 
                        text = red, font = \bfseries },
  VertexStyle/.append style = { inner sep=2pt,
                                font = \Large\bfseries, fill},
  EdgeStyle/.append style = {->, bend left} }
\newcommand{\boldm}[1] {\mathversion{bold}#1\mathversion{normal}}
\tikzset{join/.code=\tikzset{after node path={%
\ifx\tikzchainprevious\pgfutil@empty\else(\tikzchainprevious)%
edge[every join]#1(\tikzchaincurrent)\fi}}}
\tikzset{>=stealth',every on chain/.append style={join},
         every join/.style={->}}
\tikzstyle{labeled}=[execute at begin node=$\scriptstyle,
\newtheorem{thm}{Theorem}[section]
\numberwithin{equation}{section} 
\numberwithin{figure}{section} 
\theoremstyle{plain}
\newtheorem*{thm*}{Theorem}
\theoremstyle{definition}
\theoremstyle{plain}
\newtheorem{thm_A}{Theorem}
\newtheorem*{defn*}{Definition}
\theoremstyle{plain}
\theoremstyle{plain} 
\theoremstyle{plain}
\newtheorem{prop}[thm]{Proposition} 
\theoremstyle{remark}
\newtheorem{ex}[thm]{Example}
\theoremstyle{remark}
\newtheorem{rem}[thm]{Remark}
\theoremstyle{plain}
\theoremstyle{plain}
\theoremstyle{plain}
\newtheorem{lem}[thm]{Lemma} 
\theoremstyle{definition}
\newtheorem{defn}[thm]{Definition}
\newtheorem*{acknowledgment}{Acknowledgment}
\newtheorem*{acknowledgment*}{Addentum}
\theoremstyle{plain}
\newtheorem*{ex*}{Example}
\theoremstyle{plain}
\begin{document}
\title[The Yang-Baxter equation and Thompson's group $F$ ]{The Yang-Baxter equation  and Thompson's group $F$ }
\author{Fabienne Chouraqui}
\begin{abstract}
	We define non-degenerate involutive partial solutions as an extension of  non-degenerate involutive  set-theoretical solutions of 	the quantum Yang-Baxter equation (QYBE).     The induced  operator is  not a classical solution of the 	QYBE,  but either a braiding operator as in  conformal field theory. We define the  structure  inverse monoid of  a  non-degenerate involutive  partial solution and prove that if  the partial solution  is square-free, then it embeds into the restricted product of a commutative inverse monoid and an inverse symmetric monoid. Furthermore, we show that there is a  connection  between partial solutions  and the  Thompson's group $F$. This raises the question of  whether  there are further connections between partial solutions and Thompson's groups in general.
\end{abstract}
\maketitle

\section*{Introduction}
The quantum Yang-Baxter equation is an equation in mathematical physics and it lies in the  foundation of  the theory of quantum groups. One of the fundamental problems is to find all the solutions of this equation. In \cite{drinf}, Drinfeld suggested the study of a particular class of solutions,  derived from the so-called set-theoretic solutions.  A  set-theoretic solution of the Yang-Baxter equation is a pair $(X,r)$, where $X$ is a set and 
\[r: X \times X \rightarrow X \times X\,,\;\;\; r(x,y)=(\sigma_x(y),\gamma_y(x))\]
is a bijective map satisfying $r^{12}r^{23}r^{12}=r^{23}r^{12}r^{23}$, where $r^{12}=r \times Id_X$ and  $r^{23}=Id_X\times r$. A set-theoretic solution  $(X,r)$ is said to be non-degenerate if, for every $x \in X$, the maps $\sigma_x,\gamma_x$ are bijections of $X$ and it  is said to be involutive if $r^2=Id_{X\times X}$.   Non-degenerate and involutive set-theoretic solutions give rise to solutions of the quantum Yang Baxter.  Indeed, by defining  $V$ to be  the  real vector space spanned by $X$,  and $R:V \otimes V \rightarrow V \otimes V$  to be a linear operator induced by  $\tau  \circ r $, where $\tau$ is the flip map $\tau(x,y)=(y,x)$,  $R$ is  a linear operator satisfying the equality  $R^{12}R^{13}R^{23}=R^{23}R^{13}R^{12}$ in $V \otimes V \otimes V$, that is $R$  is a solution of the quantum Yang-Baxter equation. 

Non-degenerate and involutive set-theoretic solutions of the quantum Yang-Baxter equation are  intensively investigated and they give rise to several algebraic structures associated to them. One of these  is the structure group of a solution, $G(X,r)$, which is defined by $G(X,r)=\operatorname{Gp}\langle X\mid x_ix_j=x_kx_l\,\,; r(x_i,x_j)=(x_k,x_l)\rangle$ in \cite{etingof}.  The authors prove that, $G(X,r)$,  the structure group  of a non-degenerate, involutive set-theoretic solution  $(X,S)$ embeds into the semidirect product $\mathbb{Z}^X\rtimes \operatorname{Sym}_X$, where $\operatorname{Sym}_X$ denotes the symmetric group of $X$ and  $\mathbb{Z}^X$  is the free abelian group generated by $X$.   Moreover they prove that if $X$ is finite, then $G(X,r)$ is a solvable group \cite{etingof}. \\

   \setlength\parindent{10pt}	  Another algebraic structure associated to a set-theoretic solution is a  monoid of left $I$-type,  or of  right $I$-type, defined in \cite{gateva_van}. T. Gateva-Ivanova and M. Van den Bergh prove that there is a correspondence between monoids of left $I$-type and non-degenerate,  involutive  set-theoretic solutions. Indeed, they show that a monoid $M$ is of left $I$-type if and only if there exists a non-degenerate,  involutive  set-theoretic solution $(X,r)$, with $X$ finite, such that $M\simeq \operatorname{Mon}\langle X\mid x_ix_j=x_kx_l\,\,; r(x_i,x_j)=(x_k,x_l)\rangle$.  Furthermore, they prove that in this case,  the structure group of $(X,r)$,  $G(X,r)$,  is the group of fractions of $M$ and it is a Bieberbach group \cite[Theorem 1.6]{gateva_van}. In \cite{jespers_i-type}, E. Jespers and J. Okninski prove that a monoid $M$ is of left $I$-type if and only if  $M$ is of  right $I$-type. In \cite{cedo}, the authors initiate the study of  IYB-groups, a special class of finite solvable groups. For each  non-degenerate,  involutive set-theoretic solution $(X,r)$, with $X$ finite, there is a IYB-group, the group generated by the set $\{\sigma_x\mid x\in X\}$. Furthermore, they raise the question whether every finite solvable group is IYB.\\

    T. Gateva-Ivanova conjectured that every square-free set-theoretic solution is decomposable \cite{gateva_conj}. In \cite{rump},  W.Rump proves  the conjecture is true for   square-free finite solutions and that  an extension to infinite solutions is false. He defines the structure of   cycle sets, in correspondence with non-degenerate,  involutive  set-theoretic solutions, and uses it to prove the conjecture.  Cycle sets have been studied also in the context of another conjecture of Gateva-Ivanova   in \cite{vendra-cycle} and \cite{catino-cycle, catino-cycle2}.\\

In \cite{chou_art}, we show there is a one-to-one correspondence between non-degenerate,  involutive  set-theoretic solutions and a particular class of groups, the so-called Garside groups, with a particular presentation. Garside groups  have been defined by P. Dehornoy and L.Paris as a generalization in some sense of the braid groups, and the finite-type Artin groups \cite{DePa}.  In \cite{chou_godel1,chou_godel2}, with E. Godelle, we deepen our understanding of the connection between these structures (see also   \cite{chou_aut, chou_left_garside}, \cite{gateva}, \cite{deh_coxeterlike}).\\

In \cite{rump_braces}, W. Rump introduced braces as a generalization of radical rings related with solutions of the Yang-Baxter equation. He proves there is some correspondence between non-degenerate,  involutive  set-theoretic solutions and left braces.  In his subsequent papers  \cite{rump_braces2, rump-braces3}, he deepened the study of this new structure. In \cite{brace}, the authors give an equivalent definition of brace, and they prove some propeties. In particular,  they use  braces  to solve  a problem araised by T. Gateva-Ivanova and P. Cameron in \cite{gateva-phys2}. Braces are intensively studied and the following list of references on the topic is certainly not exhaustive \cite{bachiller0,bachiller1,bachiller2}, \cite{brace-ag}, \cite{gateva_new},  \cite{smot}.\\

 Roughly, a brace is a triple $(\mathcal{B},+,\cdot)$, where  $(\mathcal{B},+)$ is an abelian group, 
 $(\mathcal{B},\cdot)$ is a group, and there is a  left-distributivity-like  axiom that relates between the two operations in $\mathcal{B}$.  For a left brace,  this  is  the following relation:  $a \cdot(b+c)=a\cdot b+a\cdot c -a$, for every $a,b,c \in \mathcal{B}$.  Several extensions of the structure of left brace have been defined. L. Guarnieri and L. Vendramin define a skew left brace to be a triple $(\mathcal{B},+,\cdot)$, with both  $(\mathcal{B},+)$  and  $(\mathcal{B},\cdot)$  groups, and  a  left-distributivity-like  axiom,  and they prove that there is some  correspondence between skew left braces and non-degenerate set-theoretic solutions that are not necessarily involutive \cite{g-vendra1}. We refer to  \cite{s-vendra2}, \cite{catino1} and others for more results on skew braces.\\

 In \cite{catino3}, the authors define a left semi-brace to be a triple $(\mathcal{B},+,\cdot)$, with $(\mathcal{B},+)$  a left-cancellative semigroup and  $(\mathcal{B},\cdot)$  a group,  and  a  left-distributivity-like  axiom,  and they prove that there is some  correspondence between  left semi-braces and left non-degenerate (non-involutive)  set-theoretic solutions. In \cite {catino4}, the authors define a left inverse semi-brace to be a triple $(\mathcal{B},+,\cdot)$, with $(\mathcal{B},+)$  a semigroup,  $(\mathcal{B},\cdot)$  an inverse semigroup, a  left-distributivity-like  axiom, and they prove that there is some  correspondence between  left  inverse semi-braces and  (degenerate and non-involutive)  set-theoretic solutions.\\
 
 In this paper, we define a partial left brace,   to be  a triple $(\mathcal{B},\oplus,\cdot)$, where $(\mathcal{B},\oplus)$  is a commutative partial monoid in the sense of \cite{partial-s},   $(\mathcal{B},\cdot)$  is an inverse monoid,  and the  axiom is  left-distributivity (whenever defined). This  structure is very reminiscent of  a left inverse semi-brace,  but its motivation is completely different, as it does not correspond to  a set-theoretical solution, but to  an extension of  a set-theoretic solution.  Indeed, we consider a pair $(X,r)$, with $X$ a set and $r:  \mathcal{D}  \rightarrow X \times X$, $r(x,y)=(\sigma_x(y),\gamma_x(y))$,  $\mathcal{D} \subseteq X \times X$, a partial bijection, where  $\sigma_x:\mathcal{D}_{\sigma_x}\rightarrow\mathcal{R}_{\sigma_x}$, 
 $\gamma_y:\mathcal{D}_{\gamma_y}\rightarrow\mathcal{R}_{\gamma_y}$ are maps, and $\mathcal{D}_{\sigma_x},\mathcal{R}_{\sigma_x},\mathcal{D}_{\gamma_y},\mathcal{R}_{\gamma_y}\subseteq X$. We define \emph{a partial set-theoretic solution}  to be such a pair   $(X,r)$  that satisfies an extension of the definition of  braided, and show that a partial brace is the natural corresponding structure (Theorem \ref{theo-intro-partial-brace}). 
 
 The linear operator induced by a partial set-theoretic solution,  $R$,  is defined as $R: W \rightarrow V\otimes V$, where  $V$ is the  real vector space spanned by $X$,  $W$ is a subspace of   $ V\otimes V$ and $R$  satisfies the quantum Yang-Baxter equation in some subspace of  $ V\otimes V\otimes V$. Such a kind of  operators occur in the context of rational conformal field theory, and string theory.  Indeed, if $V$ is a vector space with a spanning vector for each coupling, the constraints on the allowed interactions or couplings  take the form of relations satisfied by  two linear operators,  $B$ and $F$,  called the braiding and the fusion operators respectively.  Both  operators are defined from subspaces of   $ V\otimes V$ to  $ V\otimes V$,  and $B$ satisfies the Yang-Baxter equation in a subspace of   $ V\otimes V \otimes V$. We refer to \cite{moore} for more details. 
 
  We  extend the definitions of non-degenerate and involutive to partial set-theoretic solutions. We define \emph{the structure group} of a non-degenerate, involutive partial set theoretic solution to be $G(X,r)=\operatorname{Gp} \langle X\mid\ xy =\sigma_x(y)\gamma_y(x)\ ;\ (x,y)\in \mathcal{D} \rangle$.  
 The \emph{structure inverse monoid} of $(X,r)$ is 	$\operatorname{IM}(X,r)=\operatorname{Inv} \langle X\mid\ xy =\sigma_x(y)\gamma_y(x)\ ;\ (x,y)\in \mathcal{D} \rangle$.  We prove the following:
 \begin{thm_A}
 	Let $(X,r)$ be a square-free, non-degenerate, involutive partial set-theoretic solution, with structure inverse monoid $\operatorname{IM}(X,r)$. Let $\operatorname{I}_X$ denote the symmetric inverse monoid, that is the set of partial bijections of $X$ (with respect to composition whenever it is defined). Let $A$ denote a commutative inverse monoid generated by a set in bijection with $X$.
 	Then the restricted product $A \Join  \operatorname{I}_X$ exists and is an inverse semigroup. Moreover, $\operatorname{IM}(X,r)$ embeds in  $A \Join  \operatorname{I}_X$.
 \end{thm_A}
We present the relation between partial set-theoretic solutions and partial braces.

\begin{thm_A}\label{theo-intro-partial-brace}
Let $(X,r)$ be a non-degenerate involutive partial solution with structure inverse monoid  $\operatorname{IM}(X,r)$.  Then  there exists a partial left brace $(\mathcal{B},\oplus, \cdot)$  and  a congruence $\rho$ on  $\operatorname{IM}(X,r)$,    such that $\operatorname{IM}(X,r)/\rho$  is isomorphic $(\mathcal{B},\cdot)$.
\end{thm_A}
	\margin_comment{\textcolor{red}{THM 2 CHANGED!!!}} 
 
 In 1965, R. Thompson defined three groups, $F$, $T$ and $V$,  that are nowadays called the  Thompson groups. They were used to construct finitely-presented groups with unsolvable word problem \cite{thompson}. Thompson proved that $T$ and $V$ are finitely-presented, infinite simple groups. The group $F$ is $FP_\infty$ and it is the first example of a torsion-free infinite dimensional  $FP_\infty$ group \cite{brown-geo}.  The present  work arose from an observation of  an   infinite presentation of  the  Thompson group $F$, $F\simeq\operatorname{Gp}\langle x_0,x_1,...\mid x_nx_k=x_kx_{n+1},\, k<n\rangle$ and we found a surprising connection between this and partial solutions. This raises the question of  whether  there are further connections between partial solutions and Thompson's groups in general, and in Section $6$, we present some  points in this direction.
\begin{thm_A}
	There exists a   square-free, non-degenerate, involutive partial set-theoretic solution,  $(X,r)$ such that  its structure group $G(X,r)$ is isomorphic to the Thompson group $F$. Furthermore, $(X,r)$ is irretractable, decomposable and it induces  a non-degenerate and square-free cycle set $(X,\star)$.
 \end{thm_A}

 The paper is organized as follows. In Section $1$, we give some preliminaries on set-theoretic solutions of the Yang-Baxter equation, cycle sets and braces. In Section $2$, we give some preliminaries on inverse semigroups and monoids. In Section $3$, we give some preliminaries on the Thompson group $F$, and some of its properties. In Section $4$, we define partial set-theoretic solutions, we extend the usual  definitions  of non-degenerate, braided and involutive to this context. We prove  some properties of the square-free partial set-theoretic solutions and Theorem $1$. In Section $5$, we define partial braces and introduce the method of right reversing. This process was developed in the context of Garside monoids and groups and it is an important tool in the  proof of  Theorem $2$. In Section $6$, we prove Theorem $3$, and at last we make an attempt to compare between the properties of  set-theoretic solutions and those of partial set-theoretic solutions. Section $7$ is an appendix. 
 
 \begin{acknowledgment}
 	I am very grateful to Mark  Lawson  for his great  help in learning the domain of inverse semigroups, via his book and via the numerous questions I asked him.  	I am also  grateful to Ben Steinberg  for suggesting me the study of the restricted product of a commutative inverse monoid and the symmetric inverse monoid.
 \end{acknowledgment}
\section{Preliminaries on set-theoretic solutions of the  quantum Yang-Baxter equation  (QYBE) }
\subsection{Definition and properties of set-theoretic solutions of the QYBE}
\label{subsec_qybe_Backgd}We  refer to \cite{etingof}, \cite{gateva_van,gateva-phys1,gateva_new,gateva-phys2},  \cite{jespers_i-type,jespers_book,jespers-seul}.\\
Let $X$ be a non-empty set. Let $r: X \times X \rightarrow X \times X$  be a map and write $r(x,y)=(\sigma_{x}(y),\gamma_{y}(x))$,  where $\sigma_x, \gamma_x:X\to X$ are functions  for all  $x,y \in X$.   The pair $(X,r)$ is  \emph{braided} if $r^{12}r^{23}r^{12}=r^{23}r^{12}r^{23}$, where the map $r^{ii+1}$ means $r$ acting on the $i$-th and $(i+1)$-th components of $X^3$.  In this case, we  call  $(X,r)$  \emph{a set-theoretic solution of the quantum Yang-Baxter equation}, and whenever $X$ is finite, we  call  $(X,r)$  \emph{a finite set-theoretic solution of the quantum Yang-Baxter equation}.  The pair $(X,r)$ is \emph{non-degenerate} if for every  $x\in X$,  $\sigma_{x}$ and $\gamma_{x}$  are bijective and it   is  \emph{involutive} if $r\circ r = Id_{X^2}$. If $(X,r)$ is a non-degenerate involutive set-theoretic solution, then $r(x,y)$ can be described as  $r(x,y)=(\sigma_{x}(y),\gamma_{y}(x))=(\sigma_{x}(y),\,\sigma^{-1}_{\sigma_{x}(y)}(x))$.  A set-theoretic solution  $(X,r)$ is \emph{square-free},  if  for every $x \in X$, $r(x,x)=(x,x)$. A set-theoretic  solution $(X,r)$ is \emph{trivial} if $\sigma_{x}=\gamma_{x}=Id_X$, for every  $x \in X$. 
 \begin{lem}\cite{etingof} \label{lem-formules-invol+braided}
 	\begin{enumerate}[(i)]
 	\item  $(X,r)$ is involutive  if  and only if   $\sigma_{\sigma_x(y)}\gamma_{y}(x)=x$ and $\gamma_{\gamma_y(x)}\sigma_x(y)=y$,  $x,y \in X$.
\item   $(X,r)$ is  braided if  and only if, 	for  every $x,y,z \in X$, the following  holds:
 \begin{align*}
& \sigma_x\sigma_y=\sigma_{\sigma_x(y)}\sigma_{\gamma_y(x)}\\ 
& \gamma_y\gamma_x=\gamma_{\gamma_y(x)}\gamma_{\sigma_x(y)}\\
&\gamma_{\sigma_{\gamma_y(x)}(z)}(\sigma_x(y))=\sigma_{\gamma_{\sigma_y(z)}(x)}(\gamma_z(y))
\end{align*}
 \end{enumerate}
\end{lem}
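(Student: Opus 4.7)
The plan is to verify both characterizations by direct computation, using only the definition $r(x,y)=(\sigma_{x}(y),\gamma_{y}(x))$.

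For part (i), I would simply iterate $r$. Starting from $(x,y)$, one has
\[r^{2}(x,y)=r(\sigma_{x}(y),\gamma_{y}(x))=\bigl(\sigma_{\sigma_{x}(y)}(\gamma_{y}(x)),\,\gamma_{\gamma_{y}(x)}(\sigma_{x}(y))\bigr).\]
Setting $r^{2}(x,y)=(x,y)$ and equating components yields the two identities claimed, and conversely these two identities together with the formula for $r$ force $r^{2}=Id_{X^{2}}$. So (i) is essentially definitional.

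For part (ii), I would compute both sides of $r^{12}r^{23}r^{12}(x,y,z)=r^{23}r^{12}r^{23}(x,y,z)$ step by step. On the left one finds successively
\[(x,y,z)\mapsto(\sigma_{x}(y),\gamma_{y}(x),z)\mapsto(\sigma_{x}(y),\sigma_{\gamma_{y}(x)}(z),\gamma_{z}(\gamma_{y}(x)))\]
\[\mapsto\bigl(\sigma_{\sigma_{x}(y)}\sigma_{\gamma_{y}(x)}(z),\,\gamma_{\sigma_{\gamma_{y}(x)}(z)}(\sigma_{x}(y)),\,\gamma_{z}\gamma_{y}(x)\bigr),\]
and on the right, analogously,
\[(x,y,z)\mapsto(x,\sigma_{y}(z),\gamma_{z}(y))\mapsto(\sigma_{x}\sigma_{y}(z),\gamma_{\sigma_{y}(z)}(x),\gamma_{z}(y))\]
\[\mapsto\bigl(\sigma_{x}\sigma_{y}(z),\,\sigma_{\gamma_{\sigma_{y}(z)}(x)}(\gamma_{z}(y)),\,\gamma_{\gamma_{z}(y)}\gamma_{\sigma_{y}(z)}(x)\bigr).\]
Equating first components for all $z$ gives $\sigma_{x}\sigma_{y}=\sigma_{\sigma_{x}(y)}\sigma_{\gamma_{y}(x)}$; equating third components for all $x$ (after relabelling $z,y\mapsto y,x$) gives $\gamma_{y}\gamma_{x}=\gamma_{\gamma_{y}(x)}\gamma_{\sigma_{x}(y)}$; and the second components give exactly the third mixed identity in the statement. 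Conversely, if these three identities hold pointwise on $X$, the three component-wise equalities above hold, so $r^{12}r^{23}r^{12}=r^{23}r^{12}r^{23}$.

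This argument is entirely mechanical; there is no real obstacle beyond keeping the subscripts straight. The only care needed is in the relabelling when reading off the $\gamma$-identity from the third-component equation, and in noticing that the second-component equation is already in the correct form and requires no simplification via (i) (so the braided characterization holds whether or not $(X,r)$ is involutive).
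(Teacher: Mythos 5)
Your proof is correct and is the standard direct verification: the paper itself gives no proof (the lemma is quoted from the Etingof--Schedler--Soloviev reference), and the argument there is exactly this component-wise computation of $r^{2}$ and of $r^{12}r^{23}r^{12}$ versus $r^{23}r^{12}r^{23}$, including the relabelling needed to read off the $\gamma$-identity. Nothing is missing; your remark that the braided characterization does not use involutivity is also accurate.
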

\begin{defn}
Let  $(X,r)$ be a non-degenerate involutive set-theoretic solution of the QYBE. 
\begin{enumerate}[(i)]
	\item  A set $Y \subset X$ is  \emph{invariant} if  $r(Y \times Y)\subset Y \times Y$.
	\item An invariant  subset $Y \subset X$  is  \emph{non-degenerate} if  $(Y,r\mid_{Y^2})$ is non-degenerate involutive set-theoretic solution of the QYBE. 
	\item  $(X,r)$  is \emph{decomposable} if it is a union of two non-empty disjoint non-degenerate invariant subsets. Otherwise, it is called \emph{indecomposable}.
\end{enumerate}

\end{defn}
A very simple class of  non-degenerate involutive set-theoretic solutions of the QYBE is the class of  \emph{permutation solutions}. These solutions have the form $r(x,y)=(\sigma(y),\sigma^{-1}(x))$,  where the bijections  $\sigma_x: X\to X$  are  all equal and equal to $\sigma$, the   bijections  $\gamma_x: X\to X$  are  all equal and equal to $\sigma^{-1}$. If $\sigma$ is a cyclic permutation, $(X,r)$ is a \emph{cyclic permutation solution}. A permutation solution is indecomposable if and only if it is cyclic \cite[p.184]{etingof}.
\begin{defn}
	Let  $(X,r)$   be a set-theoretic solution of the QYBE. The \emph{structure group} of $(X,r)$ is  defined by $G(X,r)=\operatorname{Gp} \langle X\mid\ xy =\sigma_x(y)\gamma_y(x)\ ;\ x,y\in X \rangle$.  
\end{defn}
The  structure group of the trivial solution is $\mathbb{Z}^{X }$.  Two  set-theoretic solutions $(X,r)$ and $(X',r')$ are \emph{isomorphic} if there is a bijection $\alpha:X \rightarrow X'$ such that $(\alpha \times \alpha) \circ r=r'\circ (\alpha \times \alpha)$  \cite{etingof}. If  $(X,r)$ and $(X',r')$ are isomorphic, then $G(X,r) \simeq G(X',r')$, with  $G(X,r)$ and $G(X',r')$ their respective structure groups. 
An important characterisation of  non-degenerate involutive set-theoretic solutions of the QYBE is  presented in the following proposition.
\begin{prop}\cite[p.176-180]{etingof}\label{prop-etingof}
Let  $(X,r)$ be a non-degenerate involutive set-theoretic solution of the QYBE, defined by  $r(x,y)=(\sigma_{x}(y),\gamma_{y}(x))$,   $x,y \in X$,  with structure group $G(X,r)$. Let $\mathbb{Z}^{X}$ denote the free abelian group with basis $\{t_x \mid x \in X\}$, and  $\operatorname{Sym}_X$ denote the symmetric group of $X$. Then 
\begin{enumerate}[(i)]
		\item The map $\varphi: G(X,r) \rightarrow \operatorname{Sym}_X$, defined by $x \mapsto \sigma_{x}$,  is a homomorphism of groups.
		\item  The group $\operatorname{Sym}_X$  acts on $\mathbb{Z}^{X}$.
		\item The group $G(X,r) $  acts on $\mathbb{Z}^{X}$:  if  $g \in G$, then $g \bullet t_x=t_{\alpha(x)}$, with $\alpha=\varphi(g)$. 
	\item The map 	$\pi:G(X,r)\rightarrow \mathbb{Z}^{X}$ is a bijective $1$-cocycle, where  $\pi(x)=t_x$,  for $x \in X$, and  $\pi(gh) =\pi(g)+g \bullet \pi(h)$,  for $g,h \in G(X,r)$.

 \item There  is a monomorphism of groups $\psi: G(X,r) \rightarrow \mathbb{Z}^{X} \rtimes \operatorname{Sym}_X$:  $\psi(x)=(t_x,\sigma_x) $, $\psi(g)=(\pi(g), \varphi(g))$.
 
 \item  The group $G(X,r) $ is isomorphic to  a subgroup of  $\mathbb{Z}^{X} \rtimes \operatorname{Sym}_X$ of the form $H=\{(a,\phi(a))\mid a \in \mathbb{Z}^{X} \}$, where $\phi: \mathbb{Z}^{X} \rightarrow \operatorname{Sym}_X$, is defined by $\phi(a)=\varphi(g)$,  whenever  $\pi(g)=a$.
 \end{enumerate}
\end{prop}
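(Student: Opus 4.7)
The plan is to treat parts (i)--(iii) as essentially formal, part (v) as a direct computation once (iv) is in hand, and to concentrate the real work on proving that the $1$-cocycle $\pi$ in (iv) is bijective.

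For (i), I would send each generator $x\in X$ to $\sigma_x\in\operatorname{Sym}_X$ and check that this respects every defining relation $xy=\sigma_x(y)\gamma_y(x)$. That check reduces exactly to the first identity of Lemma~\ref{lem-formules-invol+braided}(ii): $\sigma_x\sigma_y=\sigma_{\sigma_x(y)}\sigma_{\gamma_y(x)}$. For (ii), $\operatorname{Sym}_X$ acts on $\mathbb{Z}^X$ by permuting the basis, $\alpha\cdot t_x=t_{\alpha(x)}$; this is a well-defined group action since $\mathbb{Z}^X$ is free on $\{t_x\}$. Part (iii) is then the composition of $\varphi$ from (i) with the action from (ii).

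For (iv), I would define $\pi$ on generators by $\pi(x)=t_x$ and extend it using the cocycle formula $\pi(gh)=\pi(g)+g\bullet\pi(h)$, first on the free group $F(X)$ and then show it descends to $G(X,r)$. To see the descent, I would verify that on the two sides of each defining relation $xy=\sigma_x(y)\gamma_y(x)$ the cocycle values agree: the left gives $t_x+\sigma_x\!\cdot\! t_y=t_x+t_{\sigma_x(y)}$, while the right gives $t_{\sigma_x(y)}+t_{\sigma_{\sigma_x(y)}(\gamma_y(x))}$, which by the involutivity identity of Lemma~\ref{lem-formules-invol+braided}(i) equals $t_{\sigma_x(y)}+t_x$. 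Since $\mathbb{Z}^X$ is abelian the two coincide. Inverses are handled using $\pi(g^{-1})=-g^{-1}\bullet\pi(g)$. For (v), the required homomorphism identity $\psi(gh)=\psi(g)\psi(h)$ reads $(\pi(gh),\varphi(gh))=(\pi(g)+g\bullet\pi(h),\,\varphi(g)\varphi(h))$, which is immediate from (i) and the cocycle property established in (iv).

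The main obstacle is showing that $\pi$ is bijective, equivalently that $\psi$ is injective. My plan is to work through the structure monoid $M(X,r)=\operatorname{Mon}\langle X\mid xy=\sigma_x(y)\gamma_y(x)\rangle$. Using the braided identities in Lemma~\ref{lem-formules-invol+braided}(ii) I would show that the defining rewriting is length-preserving and confluent, so that $M(X,r)$ is left and right cancellative. Cancellativity plus the fact that every element of $M(X,r)$ has a unique normal form of the form $(m,\alpha)\in\mathbb{N}^X\times\operatorname{Sym}_X$ (what is sometimes called left $I$-type) identifies the monoid-level analogue of $(\pi,\varphi)$ as a bijection $M(X,r)\to\mathbb{N}^X$. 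Concretely, I would prove by induction on word length that if $x_{i_1}\cdots x_{i_n}$ and $x_{j_1}\cdots x_{j_n}$ represent the same element of $M(X,r)$, then the iterated evaluations $t_{x_{i_1}}+t_{\sigma_{x_{i_1}}(x_{i_2})}+\cdots$ on both sides coincide in $\mathbb{Z}^X$, which gives injectivity of $\pi$ on $M(X,r)$; cancellativity then lifts $M(X,r)$ into its group of fractions, which coincides with $G(X,r)$, and injectivity transfers. Surjectivity is clear because each basis element $t_x$ is already in the image. Part (vi) is then a formal consequence of (iv) and (v): because $\pi$ is bijective, the assignment $\phi(a):=\varphi(g)$ for the unique $g$ with $\pi(g)=a$ is a well-defined map $\mathbb{Z}^X\to\operatorname{Sym}_X$, and $\psi(G(X,r))=\{(a,\phi(a)):a\in\mathbb{Z}^X\}$.
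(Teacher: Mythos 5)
Your handling of (i)--(iii), of the descent of the cocycle in (iv) (the check that $t_x+t_{\sigma_x(y)}$ equals $t_{\sigma_x(y)}+t_{\sigma_{\sigma_x(y)}(\gamma_y(x))}$ via Lemma \ref{lem-formules-invol+braided}(i)), and of (v) and (vi) as formal consequences of bijectivity is correct, and these are indeed the routine parts. Note that the paper itself gives no proof of this proposition --- it is quoted from \cite{etingof} --- so the relevant comparison is with the cited source and with the paper's own analogue for partial solutions (Theorem \ref{theo-pi-injective}).

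The genuine gap is in the one non-formal step, the injectivity of $\pi$. The induction you describe --- ``if $x_{i_1}\cdots x_{i_n}$ and $x_{j_1}\cdots x_{j_n}$ represent the same element of $M(X,r)$, then the iterated evaluations coincide'' --- is the implication in the wrong direction: it re-establishes that $\pi$ is well defined (which you already have from the relation check), whereas injectivity is the converse, that equal evaluations force equal elements, and no argument for that is given. The scaffolding around it is also asserted rather than proved: length-preservation of the relations is clear, but confluence, cancellativity of $M(X,r)$, and the existence and uniqueness of $I$-type normal forms are precisely the content of the Gateva-Ivanova--Van den Bergh theorem \cite{gateva_van}, not consequences one can wave at; and cancellativity alone does not embed $M(X,r)$ into a group of fractions isomorphic to $G(X,r)$ --- one needs the Ore condition (or the Garside structure of \cite{chou_art}), and then a further step to transfer injectivity from $M$ to $G$, since a group element is only a fraction $mn^{-1}$ and $\pi(mn^{-1})$ does not reduce immediately to the monoid case. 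Two standard repairs: either invoke the $I$-type/Garside machinery honestly, or, as the author does for the partial analogue in Lemma \ref{lem-pi-inj}, construct an explicit map $\omega:\mathbb{Z}^{X}\to G(X,r)$ with $\omega\circ\pi=\operatorname{Id}$ by induction on word length, which yields injectivity directly without any appeal to cancellativity or normal forms.
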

The subgroup of $\operatorname{Sym}_X$  generated by $\{\sigma_x\mid x\in X\}$ is denoted by $\mathcal{G}(X,r)$ and is called \emph{a IYB group} \cite{cedo}, \cite{gateva-phys2}.

\begin{defn}
	The \emph{retract} relation $\sim$ on the set $X$ is defined by $x \sim y$ if $\sigma_x=\sigma_y$. There is a natural induced solution $Ret(X,r)=(X/\sim,r)$, called the \emph{the retraction of $(X,r)$}, defined by $r'([x],[y])=([\sigma_{x}(y)],[\gamma_y(x)])$.
	A non-degenerate involutive set-theoretic solution  $(X,r)$   is called \emph{a multipermutation solution of level $m$} if $m$ is the smallest natural number such that the solution $\mid Ret^m(X,r)\mid=1$, where 
	$Ret^k(X,r)=Ret(Ret^{k-1}(X,r))$, for $k>1$. If such an $m$ exists, $(X,r)$   is also called \emph{retractable}, otherwise it is called \emph{irretractable}.
\end{defn}

\begin{ex} \label{exemple:exesolu_et_gars} Let $X = \{x_1,x_2,x_3,x_4\}$, and ~$r: X\times X\to X\times X$ be defined by $r(x_i,x_j)=(x_{g_{i}(j)},x_{f_{j}(i)})$ where~$g_i$ and $f_j$ are permutations on~$\{1,2,3,4\}$ as follows: $g_{1}=(2,3)$, $g_2=(1,4)$, $g_{3}=(1,2,4,3)$, $g_{4}=(1,3,4,2)$; $f_{1}=(2,4)$, $f_2=(1,3)$, $f_{3}=(1,4,3,2)$, $f_{4}=(1,2,3,4)$.
	Then  $(X,r)$ is  an indecomposable, and irretractable  solution, with   structure group $G=\operatorname{Gp} \langle X\mid  x_{1}x_{2}=x^{2}_{3}; x_{1}x_{3}=x_{2}x_{4};
	x_{2}x_{1}=x^{2}_{4}; x_{2}x_{3}=x_{3}x_{1};
	x_{1}x_{4}=x_{4}x_{2};x_{3}x_{2}=x_{4}x_{1} \rangle$. 
\end{ex}    
Structure monoids of non-degenerate, involutive set-theoretic solutions of the QYBE are Garside monoids,  that satisfy  interesting properties \cite{chou_art,chou_godel1,chou_godel2}, \cite{gateva}. 
\subsection{The Yang-Baxter equation, Cycle sets, and Braces}
Gateva-Ivanova conjectured that square-free, non-degenerate involutive set-theoretic solutions of the QYBE  are decomposable \cite{gateva_conj}.  In \cite{rump}, Rump proved that Gateva-Ivanova's conjecture is true for finite square-free  solutions and not necessarily true whenever the finiteness assumption is removed.  His proof is based on cycle sets, a new structure  introduced in \cite{rump}. 
\begin{defn}\label{defn-cycle-set}
 A \emph{cycle set} is a   a non-empty set  $X$ with a binary operation $\star$ that satisfies:
 \begin{enumerate}[(i)]
 	\item The map $\tau(x)$,   defined by $\tau(x)(y)= x \star y$ is invertible, for every $x\in X$.
 	\item   $(x \star y)\star (x \star z)=(y\star x)\star (y \star z)$,  for every $x,y,z \in X$.
 \end{enumerate} 

A cycle set is \emph{non-degenerate} if  the map $x \mapsto x \star x$ is bijective,  for all $x \in X$.

A cycle set is \emph{square-free} if  $x \star x=x$, for all $x \in X$.

\end{defn}

\begin{thm}\cite[Prop. 1]{rump}
There is a bijective correspondence between non-degenerate cycle sets and non-degenerate involutive  set-theoretic solutions of the QYBE.
\end{thm}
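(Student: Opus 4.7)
The plan is to construct explicit maps in both directions between non-degenerate involutive solutions and non-degenerate cycle sets, and to verify they are mutually inverse. The starting observation is that for a non-degenerate involutive $(X,r)$ with $r(x,y)=(\sigma_x(y),\gamma_y(x))$, the data is entirely determined by the family $\{\sigma_x\}_{x\in X}$, because the involutive identity $\sigma_{\sigma_x(y)}\gamma_y(x)=x$ of Lemma \ref{lem-formules-invol+braided}(i) forces $\gamma_y(x)=\sigma_{\sigma_x(y)}^{-1}(x)$. The idea is then to encode the bijections $\sigma_x$ as a single binary operation on $X$.

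From a solution to a cycle set: given $(X,r)$, I set $x\star y=\sigma_x^{-1}(y)$, so that $\tau(x)=\sigma_x^{-1}$ is a bijection by non-degeneracy, giving the first cycle set axiom. The cycle set identity $\tau(x\star y)\,\tau(x)=\tau(y\star x)\,\tau(y)$, applied to $z$, is equivalent to $\sigma_x\,\sigma_{\sigma_x^{-1}(y)}=\sigma_y\,\sigma_{\sigma_y^{-1}(x)}$. I derive this from the first braided identity $\sigma_x\sigma_y=\sigma_{\sigma_x(y)}\sigma_{\gamma_y(x)}$ by substituting $y\mapsto\sigma_x^{-1}(y)$ and eliminating $\gamma$ via the involutive formula. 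Non-degeneracy of the resulting cycle set, which asks that $x\mapsto\sigma_x^{-1}(x)$ be bijective, I deduce from the classical fact that for a non-degenerate involutive solution the diagonal map $x\mapsto\gamma_x(x)$ is bijective, translated through the involutive formula.

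From a cycle set to a solution: given $(X,\star)$ non-degenerate, I set $\sigma_x=\tau(x)^{-1}$, define $\gamma_y(x)=\sigma_{\sigma_x(y)}^{-1}(x)$ (the only choice compatible with involutivity), and put $r(x,y)=(\sigma_x(y),\gamma_y(x))$. Involutivity holds by construction. The first braided identity is exactly the cycle set axiom rewritten in terms of $\sigma$, via the chain of equivalences above. The remaining two braided identities of Lemma \ref{lem-formules-invol+braided}(ii) will follow from the first one together with involutivity by a standard formal manipulation: since an involutive $r$ satisfies $r^{-1}=r$, the symmetry interchanging the roles of $\sigma$ and $\gamma$ transforms the $\sigma$-identity into the $\gamma$-identity, and combining them yields the mixed $\sigma$–$\gamma$ identity. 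Mutual inversion of the two constructions is then immediate: $\tau(x)^{-1}$ inverts $\sigma_x^{-1}$, and the $\gamma$-component is recovered in each direction by the forced involutive formula.

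The main obstacle I anticipate is showing that each $\gamma_y$ defined by $\gamma_y(x)=\sigma_{\sigma_x(y)}^{-1}(x)$ is a bijection of $X$: this is not visibly implied by the cycle set axioms because the index $\sigma_x(y)$ itself depends on $x$. The non-degeneracy hypothesis on $(X,\star)$ handles only the diagonal $y=x$. To leverage it for arbitrary $y$, my plan is to build an explicit inverse of $\gamma_y$ by parameterizing the unknown $u=\sigma_x(y)$, expressing the condition $\gamma_y(x)=z$ as a system in $(x,u)$, and using the cycle set identity to transport this system to the diagonal case, where the bijectivity of $x\mapsto x\star x$ yields the unique solution.
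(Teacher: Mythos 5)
Your constructions coincide exactly with the correspondence the paper records immediately after the theorem statement, namely $x\star y=\sigma_x^{-1}(y)$ in one direction and $r(x,y)=\bigl(\tau(x)^{-1}(y),\,\tau(x)^{-1}(y)\star x\bigr)$ in the other; the paper itself supplies no proof beyond these formulas, as the result is cited from Rump. Your verification plan is sound and is essentially Rump's argument — in particular, your reduction of the bijectivity of each $\gamma_y$ to the bijectivity of the squaring map (setting $u=\sigma_x(y)$, so that $\gamma_y(x)=z$ becomes the system $x\star u=y$, $u\star x=z$, and then specializing the cycle-set identity at $w=u$ to get $y\star y=z\star(u\star u)$) does work and correctly isolates where the non-degeneracy of the cycle set is used.
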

Given $X$  a non-degenerate cycle set. The  pair $(X,r)$ is a non-degenerate involutive  set-theoretic solution of the YBE, with $r(x,y)=((\tau(x))^{-1}(y), (\tau(x))^{-1}(y) \star x)$.
Given a non-degenerate involutive  set-theoretic solution of the YBE,  $(X,r)$,   with $r(x,y)=(\sigma_x(y),\gamma_y(x))$, $X$  is  a non-degenerate cycle set with $\tau(x)(y)=x \star y=\sigma_x^{-1}(y)$.\\

In \cite{rump_braces}, Rump introduced braces as a generalization of radical rings related with non-degenerate 
involutive set-theoretic solutions of the QYBE. In subsequent papers, he developed the theory of this new algebraic structure. In \cite{brace}, the authors give another equivalent definition of a brace and study its structure. We follow the terminology from  \cite{brace}.
\begin{defn}
	A \emph{left brace}  is a set  $G$ with two operations, $+$ and $\cdot$, such that $(G+)$ is an abelian group, $(G,\cdot)$ is a group and for every $a,b,c \in G$:
\begin{equation}\label{eqn-brace}
	a \cdot (b+c) = a \cdot b+a \cdot c -a
\end{equation}
	The groups  $(G,+)$  and $(G\cdot)$ are called \emph{the additive group} and \emph{the multiplicative group of the brace},  respectively.\\
	A right brace is defined similarly, by replacing Equation \ref{eqn-brace} by 
	\begin{equation}\label{eqn-brace-right}
(a+b) \cdot c +c = a \cdot c+b \cdot c
	\end{equation} 
\end{defn}
A \emph{two-sided brace} is a left and right brace, that is both Equations \ref{eqn-brace} and  \ref{eqn-brace-right} are satisfied. From the definition of a left brace $G$, it  follows  that the multiplicative identity of the multiplicative group of $G$ is equal to the neutral element of the additive group of $G$. Additionnally,  for every $a,b,c \in G$, $a\cdot (b-c)=a\cdot b-a \cdot c +a$.

\section{Preliminaries on Inverse semigroups and monoids}

\subsection{Definition and properties of Inverse semigroups and monoids} We refer to \cite{lawson}, \cite{clifford}, \cite{wagner}.
A \emph{regular  semigroup} is a semigroup $S$  such that for every element $s\in S$ there exists at least one   element $s^{*} \in S$ such that $ss^{*}s=s$ and $s^{*} ss^{*} =s^{*} $. The element $s^{*}$ is  called \emph{the inverse of $s$}.
An \emph{inverse semigroup} is a semigroup $S$  such that for every element $s\in S$ there exists a unique element $s^{*} \in S$ such that $ss^{*}s=s$ and $s^{*} ss^{*} =s^{*} $, that is $S$ is a regular semigroup such that every element in $S$ has a unique inverse. An \emph{inverse monoid}  $S$ is an inverse semigroup with $1$, and  if  additionally,  for every $s\in S$,  $s^{*} s=s^{*}s=1$, then $S$ is a group.  Another equivalent definition of an inverse semigroup is a regular semigroup in which all the idempotents commute, that is the set $E(S)$ of idempotents of  an inverse semigroup $S$  is a commutative subsemigroup; it is ordered by $e\leq f$ if and only if $ef=e=fe$. The order on $E(S)$ extends to $S$ as the so-called natural partial order by putting $s\leq t$  if $s=et$ for some idempotent $e$ (or equivalently $s=tf$ for some idempotent $f$). This is equivalent to $s=ts^*s$ or $s=ss^*t$. 
For an idempotent $e$, the set $G_e=\{s\in S \mid ss^*=s^*s=e\}$ is a group (called the maximal subgroup of $S$ at $e$). Idempotents $e$ and $f$ are said to be $\mathscr{D}$-equivalent, written $e\mathscr{D}f$, if there exists $s\in S$ so that $e=s^*s$ and $f=ss^*$. If $S$ is an inverse semigroup and $\varphi:S\rightarrow T$ is a homomorphism  of semigroups, then the homomorphic image of $S$ is an inverse semigroup and the property $\varphi(s^*)=(\varphi(s))^*$  holds for every $s \in S$. \\

An   important class of inverse monoids  is the class of commutative inverse monoids (or semigroups).  Each  element in a commutative inverse monoid  $A$ generated by a set $X$  is  in one-to-one correspondence with  a partial function with finite support  $f: X\rightarrow \mathbb{Z}$, $x_{i_j}\mapsto m_{i_j}$, $1 \leq j \leq k$ and $\mathcal{D}_f$, the  domain of $f$ is  $ \{x_{i_1},...,x_{i_k}\}$.  The operation in $A$ is  defined  pointwise, with  $(f+g)(x)=f(x)+g(x)$, where  $\mathcal{D}_{f+g}=\mathcal{D}_f\cap \mathcal{D}_g$. The identity is $0_X$, the zero function on $X$.

Another    important class of inverse monoids  is the class of symmetric inverse monoids. A \emph{partial bijection} of  a set $X$ is a bijection $f$ between two (non-necessarily proper) subsets of $X$, the domain and the range   of $f$  are  denoted by $\mathcal{D}_f$, and  $\mathcal{R}_f$ respectively. If  $f \neq Id$,  where $Id$ denotes the identity function on $X$, the domain of $f$ is allowed to be the empty set.
The set of all partial bijections of a set $X$, equipped with the operation of  composition of  functions $\circ$, is an inverse monoid with zero, the zero element being the vacuous map $\emptyset \rightarrow\emptyset$. It is  called the \emph{symmetric inverse monoid} and it is denoted by $\operatorname{I}_X$.  If $f,g \in \operatorname{I}_X$, then $f \circ g$ is the composition of partial maps in the largest domain where it makes sense, that is
$\mathcal{D}_{f \circ g}=g^{-1}(\mathcal{D}_{f }\cap \mathcal{R}_{g})$, and  $\mathcal{R}_{f \circ g}=f(\mathcal{D}_{f }\cap \mathcal{R}_{g})$. The idempotents of $I_X$  are precisely the partial identities on $X$ \cite[p.6]{lawson}. From the Wagner-Preston Theorem, the analogue of Cayley's Theorem from  group theory,  any inverse semigroup $S$ can be embedded into  some  symmetric inverse semigroup, that is  $S$  is isomorphic to the subsemigroup of  a  symmetric inverse semigroup \cite{preston}, \cite{wagner}.

\begin{defn}\label{defn-partial-action}
	Let $M$  be an inverse   monoid. Let $X$ be a set.  Let $S$ be an inverse semigroup.  
	We say that  $M$ \emph{acts  on  $X$   by partial permutations} if there exists a homomorphism of monoids  $ M \rightarrow I_X$.
\end{defn}

\begin{defn} \cite{lawson}\label{defn-action}
Let $S$ be a semigroup. The monoid $M$ is said   \emph{to act  on  $S$ (on the left) (by endomorphisms)} if there exists a map $M\times S \rightarrow S$, denoted by $(a,s) \mapsto a\bullet s$ satisfying the following conditions:
\begin{enumerate}[(i)]

	\item  for any $a,b\in M$,  $s\in S$,   $(ab)\bullet s= a\bullet  (b\bullet s)$.
		\item  for any $a\in M$,  $s,s'\in S$,   $a\bullet  (ss')= (a\bullet s)(a\bullet s')$.  
\item  for every $s\in S$, $1 \bullet s=s$.\\
		
		\noindent If $S$ is a semigroup with zero $0$, the additional following property is required:
		\item  for any $a\in M$,   $ a \bullet  0=0$.
\end{enumerate}
\end{defn}
This definition is equivalent to the existence of a homomorphism of monoids from $M$ to the monoid of endomorphisms of $S$. This homomorphism maps inverses  to inverses and idempotents to idempotents. Let $M$ act on $S$ by endomorphisms. The (classical) semidirect product of  $S$ by $M$ is the set $S\times M$ equipped with the product $(s,a)(s',b)=(s(a\bullet  s'),ab)$. If $M$ and $S$ are inverse semigroups, their semidirect product is not necessarily an inverse semigroup. This leads to the definition of the $\lambda$-semidirect product and the restricted product of inverse semigroups or monoids.
\begin{defn}\cite[p.147]{lawson}\label{defn-lamb}
Let $M$, $S$ be inverse semigroups. Assume $M$ acts  on  $S$ by endomorphisms. Let $S \rtimes^{\lambda}M$ be the following set with the binary operation defined below:
\[S \rtimes^{\lambda}M=\{(s,m)\in S \times M \mid  r(m)\bullet s=s\}\]
\[(s,m)(s',m')=\,(\,(r(mm')\bullet s)(m\bullet s'),\,mm' )\]
$S \rtimes^{\lambda}M$ is an inverse semigroup called \emph{the $\lambda$-semidirect product of $S$ by $M$}. If both $S$ and $M$ are inverse monoids, then the  $\lambda$-semidirect product of $S$ by $M$, $S \rtimes^{\lambda}M$,  is an inverse monoid.
\end{defn}

\begin{defn}\cite[p.155]{lawson}\label{defn-restricted}
	Let $M$, $S$ be inverse semigroups. Let $E(M)$ denote the set of idempotents of $M$ (ordered by $e\leq f$ if and only if $ef=e=fe$).  Assume the following assumptions:
	\begin{enumerate}[(i)]
		\item 	 $M$ acts  on  $S$ by endomorphisms. 
		\item   There exists  a surjective homomorphism $\epsilon: S \rightarrow E(M)$.
		\item For each $s \in S$, there exists $\epsilon(s) \in E(M)$ such that  \[\epsilon(s)\leq e \Longleftrightarrow e \bullet s=s, \,\forall  e \in E(M)\]
			\end{enumerate}
Let $S \Join M$ be the following set with the binary operation defined below:
	\[S \Join M=\{(s,m)\in S \times M \mid  r(m)=\epsilon(s) \}\]
	\[(s,m)(s',m')=\,(\,s(m\bullet s'), \,mm' )\]
$S \Join M$ is an inverse semigroup. Furthermore, it is  an inverse subsemigroup  of 	$S \rtimes^{\lambda}M$.
\end{defn}

\subsection{The free inverse monoid, Inverse monoid presentations}
We refer the reader to \cite[Chapter 6]{lawson} for more details.
\begin{defn}\cite[p.171]{lawson}
	Let $X$ be a non-empty set. An inverse (semigroup)  monoid $\operatorname{FIM}(X)$, equipped with a function $i:X\rightarrow \operatorname{FIM}(X)$, is said to be a \emph{free inverse (semigroup) monoid  on $X$} if for every inverse (semigroup) monoid $M$ and function $\kappa: X\rightarrow M$ there exists a unique homomorphism $\theta: \operatorname{FIM}(X) \rightarrow M$ such that $\theta\circ i=\kappa$.
\end{defn}
Free inverse (semigroup) monoids exist, and $\operatorname{FIM}(X)$ is  constructed as follows. Let $X^*=\{x^*\mid x\in X\}$, a set in bijection with $X$ and disjoint from $X$. Let $FM$ be the free monoid generated by $X\cup X^*$, and define  the following unary operation: first, if $y=x\in X$, then $y^*=x^*$ and if $y=x^* \in X^*$,  $y^*=x$; next, if $y_1...y_k\in FM$, then  $(y_1...y_k)^*=y_k^*...y_1^*$. This unary operation turns $FM$ into a free monoid with involution. The  free inverse  monoid  $\operatorname{FIM}(X)$ is  then obtained by factoring  $FM$ by $\rho'$, the congruence generated by the following binary relation $\rho=\{(u,uu^*u), \,(uu^*vv^*,vv^*uu^*)\mid u,v \in FM\}$. If $u$ and $v$ belong to the same $\rho'$-class, then we shall  say that  \emph{$u$ and $v$ are \emph{equivalent} or $u$ and $v$  represent the same element in $\operatorname{FIM}(X)$}. \\

The word problem is solvable in  free inverse monoids. There exists an approach to the solution of the word problem in  free inverse monoids  which is very similar to the one in free groups, that is based on the existence of a unique normal (reduced) form.  The existence of free inverse monoids gives the possibility to define the notion of an inverse monoid presentation.
\begin{defn}
	An \emph{inverse monoid presentation} is a pair $(X,\rho)$, where $X$ is a set and $\varrho$ is a relation on $\operatorname{FIM}(X)$. The inverse monoid $\operatorname{FIM}(X)/\varrho'$ , where $\varrho'$ is the congruence generated by $\varrho$, is said to be \emph{presented} by the \emph{generators} $X$ and the \emph{relations} $\varrho$ and is denoted by $IM=\operatorname{Inv}\langle X \mid \varrho \rangle$. If both $X$ and $\varrho$ are finite, we say that $IM$ is finitely generated.
\end{defn}

\section{Preliminaries on Thomson group $F$}
In 1965, R. Thompson defined three groups, $F$, $T$ and $V$,  that are nowadays called the  Thompson groups. They were used to construct finitely-presented groups with unsolvable word problem \cite{thompson}. Thompson proved that $T$ and $V$ are finitely-presented, infinite simple groups. The group $F$ is $FP_\infty$ and it is the first example of a torsion-free infinite dimensional  $FP_\infty$ group \cite{brown-geo}. In this paper, we consider the Thompson group $F$. There are several descriptions of the Thompson group $F$, and we present two of them: as the group of dyadic rearrangements (or piecewise-linear homeomorphisms with additional properties), and as the group of tree diagrams with a certain product.  We refer to \cite{thompson}, \cite{canon}, \cite{brin}, \cite{stein}, \cite{belk} and many others for more details.
\subsection{$F$ as the group of dyadic rearrangements, and  tree diagrams}
Any subdivision of  the interval $[0,1]$ obtained by repeatedly cutting intervals in half is called \emph{a dyadic subdivision}. The intervals of a dyadic subdivision are all of the form : $[\frac{k}{2^n}, \frac{k+1}{2^n}]$, $k,n \in \mathbb{N}$. The set of all dyadic rearrangements forms a group under composition, the  Thompson group $F$.  
\begin {ex}
To illustrate, take  the interval $[0,1]$  and cut in half, like this:\\
\begin{figure}[H]
	\begin{tikzpicture}[y=.0cm, x=10cm,font=\sffamily]
	\draw (0,0) -- coordinate (x axis mid) (1,0);
	
	\foreach \x in {0,1/2,1}
	\draw (\x,1pt) -- (\x,-3pt)
	node[anchor=north] {\x};
	

	\draw plot[mark=*, mark options={fill=black}] 
	file {div_soft.data};
	
	\end{tikzpicture}
\end{figure} 
We then cut each of the resulting intervals in half:\\

\begin{figure}[H]
	
	\begin{tikzpicture}[y=.0cm, x=10cm,font=\sffamily]
	\draw (0,0) -- coordinate (x axis mid) (1,0);
	
	\foreach \x in {0,1/4,1/2,3/4,1}
	\draw (\x,1pt) -- (\x,-3pt)
	node[anchor=north] {\x};
	

	\draw plot[mark=*, mark options={fill=black}] 
	file {div_soft.data};
	
	\end{tikzpicture}
\end{figure}
and then cut some  of the new  intervals in half to get a certain subdivision of $[0,1]$:\\

\begin{figure}[H]\label{fig-dyadic-interval}
	
	\begin{tikzpicture}[y=.0cm, x=10cm,font=\sffamily]
	\draw (0,0) -- coordinate (x axis mid) (1,0);
	
	\foreach \x in {0,1/8,1/4,1/2,5/8,3/4,1}
	\draw (\x,1pt) -- (\x,-3pt)
	node[anchor=north] {\x};
	

	\draw plot[mark=*, mark options={fill=black}] 
	file {div_soft.data};
	
	\end{tikzpicture}
	\caption{A standard dyadic interval}
\end{figure}
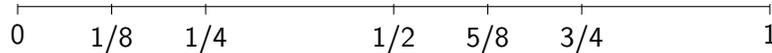
Given a pair of dyadic subdivisions, $\mathcal{D} $, $\mathcal{R}$, with the same number of cuts, a \emph{dyadic rearrangement of  $[0,1]$} is a picewise-linear homomorphism $f: [0,1] \rightarrow[0,1]$ that sends each interval of $\mathcal{D} $ linearly onto the corresponding interval of $\mathcal{R}$.

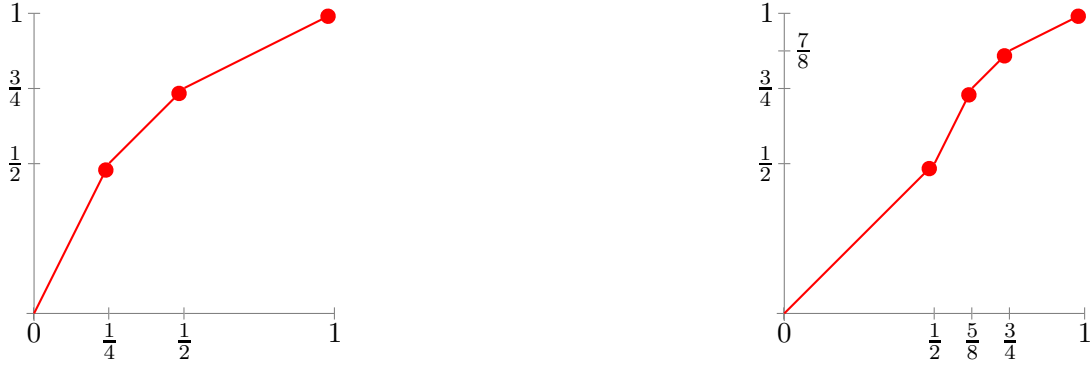
\begin{figure}[H]\label{fig-maps}
	\begin{tikzpicture}
	\draw [thin, gray, -|] (-0.1,0) -- (0,0)
	node [below, black] {$0$};    
	\draw [thin, gray, -|] (0,0) -- (1,0)
	node [below, black] {$\frac{1}{4}$};   
	\draw [thin, gray, -|] (1,0) -- (2,0)
	node [below, black] {$\frac{1}{2}$};   
	\draw [thin, gray, -|] (2,0) -- (4,0)
	node [below, black] {$1$};

	\draw [thin, gray, -|] (0,-0.1) -- (0,2)      
	node [left, black] {$\frac{1}{2}$};              
	\draw [thin, gray, -|] (0,2) -- (0,3)      
	node [left, black] {$\frac{3}{4}$};                   
	\draw [thin, gray, -|] (0,3) -- (0,4)      
	node [left, black] {$1$};                            
	
	\draw [draw=red, thick,-*] (0,0) -- (1,2);
	\draw [draw=red, thick,-*] (1,2) -- (2,3);
	\draw [draw=red, thick,-*] (2,3) -- (4,4);
	\end{tikzpicture}
	\hspace{5cm}
	\begin{tikzpicture}
	\draw [thin, gray, -|] (-0.1,0) -- (0,0)
	node [below, black] {$0$};    
	\draw [thin, gray, -|] (0,0) -- (2,0)
	node [below, black] {$\frac{1}{2}$};   
	\draw [thin, gray, -|] (2,0) -- (2.5,0)
	node [below, black] {$\frac{5}{8}$};   
	\draw [thin, gray, -|] (2.5,0) -- (3,0)
	node [below, black] {$\frac{3}{4}$};   
	\draw [thin, gray, -|] (3,0) -- (4,0)
	node [below, black] {$1$};

	\draw [thin, gray, -|] (0,-0.1) -- (0,2)      
	node [left, black] {$\frac{1}{2}$};              
	\draw [thin, gray, -|] (0,2) -- (0,3)      
	node [left, black] {$\frac{3}{4}$};     
	\draw [thin, gray, -|] (0,3) -- (0,3.5)      
	node [right, black] {$\frac{7}{8}$};                   
	\draw [thin, gray, -|] (0,3.5) -- (0,4)      
	node [left, black] {$1$};

	\draw [draw=red, thick,-*] (0,0) -- (2,2);
	\draw [draw=red,thick,-*] (2,2) -- (2.5,3);
	\draw [draw=red, thick,-*] (2.5,3) -- (3,3.5);
	\draw [draw=red, thick,-*] (3,3.5) -- (4,4);
	\end{tikzpicture}
	\caption{Dyadic rearrangements corresponding to  $x_0$ at left and $x_1$ at right}
\end{figure}
\end{ex}
To each standard dyadic interval there corresponds  a binary tree. The binary tree corresponding to the dyadic interval from Figure \ref{fig-dyadic-interval} is described in Figure \ref{fig-tree}:
\begin{figure}[H]\label{fig-tree}
\scalebox{0.9}[0.9]{	\begin{tikzpicture}[level distance=1.5cm,
	level 1/.style={sibling distance=4cm},
	level 2/.style={sibling distance=2cm},
	level 3/.style={sibling distance=1.5cm}]
	\node {[0,1]}
	child {node {[0,$\frac{1}{2}$]}
	            	child { node {[0,$\frac{1}{4}$]} child {node {[0,$\frac{1}{8}$]}} child {node {[$\frac{1}{8},\frac{1}{4}$]}}
	                }
					child {node {[$\frac{1}{4}$,$\frac{1}{2}$]}}
		}
child {node {[$\frac{1}{2}$,1]}
	   child {node {[$\frac{1}{2}$,$\frac{3}{4}$]} child {node {[$\frac{1}{2}$,$\frac{5}{8}$]}} child {node {[$\frac{5}{8}$,$\frac{3}{4}$]}}
	   }
	   child {node {[$\frac{3}{4}$,1]}}
	};
	\end{tikzpicture}}\hspace{0.2cm}\scalebox{0.5}[0.5]{	\begin{tikzpicture} 
\draw [draw=black, thick,-*] (0,0) -- (-7,-9);
 \draw [draw=black, thick,-*] (-4.7,-6) -- (-2,-9);
  \draw [draw=black, thick,-*] (-6.2,-8) -- (-5,-9);

 \draw [draw=black, thick,-*] (0,0) -- (7,-9);
 \draw [draw=black,thick,-*] (4.7,-6) -- (2,-9);
\draw [draw=black,thick,-*] (2.9,-8) -- (4,-9);

	\end{tikzpicture}}
\caption{The binary tree  for the  dyadic interval from Fig. \ref{fig-dyadic-interval}  (at left the detailed tree and  at right the schematic one).}
\end{figure}
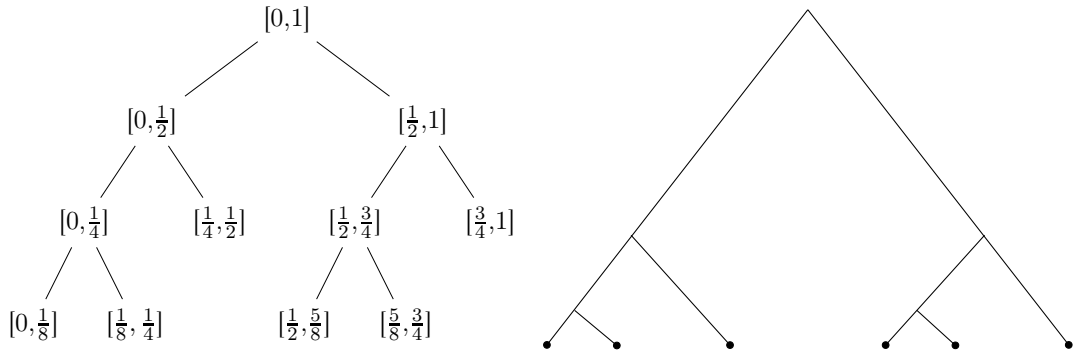
Any element of the group $F$ can be described by a pair of finite binary trees, called \emph{a tree diagram}. The two trees are aligned vertically  so that corresponding leaves match up. The domain tree appears on the top and the range tree appears on the bottom. It is illustrated in the following figure.
	\begin{figure}[H]
	\scalebox{0.8}[0.8]{	\begin{tikzpicture}
		\draw [draw=black, thick,-*] (0,0) -- (-2,-2);
		\draw [draw=black, thick,-*] (-1,-1) -- (-0.5,-2);
	\draw [draw=black, thick,-*] (0,0) -- (2,-2);

		\draw [draw=black,thick,-*] (0,-5) -- (-2,-3);
		\draw [draw=black,thick,-*] (0,-5) -- (2,-3);
			\draw [draw=black, thick,-*] (1,-4) -- (-0.2,-3);
	\end{tikzpicture}} \hspace{1cm}\scalebox{0.7}[0.7]{\begin{tikzpicture}
		\draw [draw=black, thick,-*] (0,0) -- (-3,-3);
		\draw [draw=black, thick,-*] (0,0) -- (3,-3);
		\draw [draw=black, thick,-*] (1.2,-1.2) -- (-0.5,-3);
		\draw [draw=black, thick,-*] (0,-2.4) -- (0.7,-3);

		\draw [draw=black,thick,-*] (0,-7) -- (-3,-4);
		\draw [draw=black,thick,-*] (0,-7) -- (3,-4);
		\draw [draw=black, thick,-*] (1.2,-5.8) -- (-0.5,-4);
		\draw [draw=black, thick,-*] (2.4,-4.7) -- (1.6,-4);
		\end{tikzpicture}}\hspace{1cm}\scalebox{0.6}[0.6]{\begin{tikzpicture}
		\draw [draw=black, thick,-*] (0,0) -- (-4,-4);
		\draw [draw=black, thick,-*] (0,0) -- (4,-4);
		\draw [draw=black, thick,-*] (1,-1) -- (-2.1,-4);
		\draw [draw=black, thick,-*] (2,-2) -- (0.1,-4);
		\draw [draw=black, thick,-*] (0.7,-3.3) -- (1.5,-4);

		\draw [draw=black,thick,-*] (0,-9) -- (-4,-5);
		\draw [draw=black,thick,-*] (0,-9) -- (4,-5);
		\draw [draw=black, thick,-*] (1,-8) -- (-2,-5);
		\draw [draw=black, thick,-*] (2,-7) -- (0,-5);
		\draw [draw=black, thick,-*] (2.9,-6.15) -- (1.8,-5);
		\end{tikzpicture}}
	\caption{The tree diagrams corresponding to $x_0$ and $x_1$ (as  described in Fig. \ref{fig-maps}) at left and the tree diagram of  $x_2=x_0x_1x_0^{-1}$ at right.}
	\end{figure}
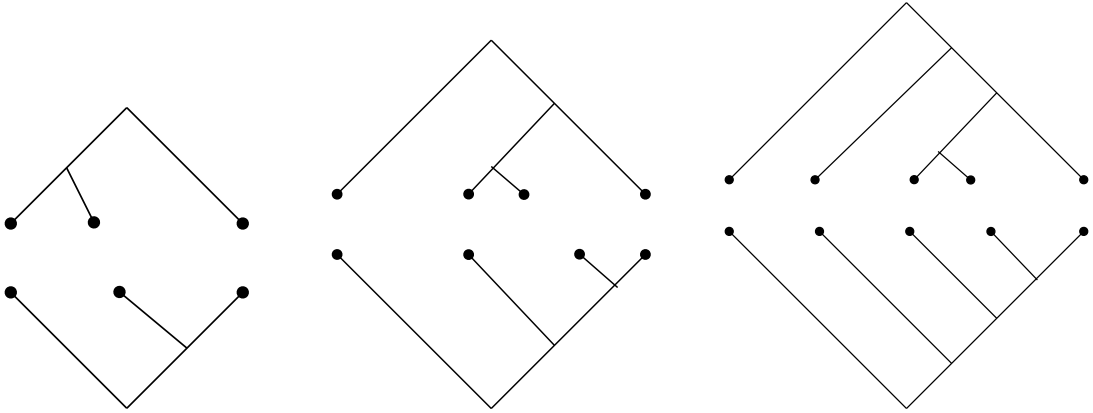

We do not get into  details  on the existence of a unique  reduced tree diagram for any element of $F$, on the product of tree diagrams and  the determination of the presentation of $F$. We refer to the litterature on the topic, and in particular to \cite{belk} for a very detailed introduction to the topic.
\subsection{Presentations of  the group $F$ and some of its  properties}

In the following theorem, we describe two finite presentations of $F$.
\begin{thm}
	\begin{enumerate}[(i)]
		\item  The elements $x_0$ and $x_1$ generate Thompson's group $F$ with  presentation of the form $\langle x_0,x_1\mid x_2x_1=x_1x_3,x_3x_1=x_1x_4 \rangle$, where $x_2=x_0x_1x_0^{-1}$ and $x_3=x_0^2x_1x_0^{-2}$, $x_4=x_0^3x_1x_0^{-3}$ and more generally $x_n=x_0x_{n-1}x_0^{-1}=x_0^{n-1}x_1x_0^{-(n-1)}$,  for every $n \geq 2$.
		\item  The elements $x_0$ and $x_1$ generate Thompson's group $F$ with  presentation of the form $\langle x_0,x_1\mid x_2x_0=x_0x_3,x_3x_0=x_0x_4\rangle $, where $x_2=x_0^{-1}x_1x_0$ and more generally $x_{n+1}=x_{n-1}^{-1}x_nx_{n-1}$, for every $n \geq 2$.
		\end{enumerate}
	\end{thm}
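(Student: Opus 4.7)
The plan is to reduce both finite presentations to the infinite presentation $F \simeq \operatorname{Gp}\langle x_0, x_1, \ldots \mid x_n x_k = x_k x_{n+1}, k < n \rangle$ cited in the Introduction. I describe the argument for part (i); part (ii) is analogous. Let $G = \operatorname{Gp}\langle x_0, x_1 \mid x_2 x_1 = x_1 x_3, \, x_3 x_1 = x_1 x_4 \rangle$, with $x_n := x_0^{n-1} x_1 x_0^{-(n-1)}$. A direct calculation in the infinite presentation verifies that the two given relations hold in $F$, yielding a homomorphism $\psi : G \to F$ which is surjective since $x_0, x_1$ generate $F$. The content of the theorem is that $\psi$ is injective, i.e., all the relations $x_n x_k = x_k x_{n+1}$ hold in $G$.

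My plan proceeds in two stages. \textbf{Stage 1:} establish the consecutive-index relations $x_m x_{m-1} = x_{m-1} x_{m+1}$ for all $m \geq 2$ in $G$. After substituting the defining formulas, this identity reduces, independently of $m$, to the single free-group identity $x_0 x_1 x_0^{-1} x_1 x_0^2 = x_1 x_0^2 x_1$, which is a rewriting of the first given relation $x_2 x_1 = x_1 x_3$. \textbf{Stage 2:} prove by induction on $n$ that $x_n x_1 = x_1 x_{n+1}$ holds in $G$ for all $n \geq 2$. The base cases $n = 2, 3$ are the two given relations. For the inductive step, using the consecutive relation $x_{n+2} = x_n^{-1} x_{n+1} x_n$ from Stage 1 together with the inductive hypotheses $x_n = x_1^{-1} x_{n-1} x_1$ and $x_{n+1} = x_1^{-1} x_n x_1$, one computes
\begin{align*}
x_{n+2} &= x_n^{-1} x_{n+1} x_n = (x_1^{-1} x_{n-1}^{-1} x_1) \, x_{n+1} \, (x_1^{-1} x_{n-1} x_1) \\
&= x_1^{-1} x_{n-1}^{-1} (x_1 x_{n+1} x_1^{-1}) x_{n-1} x_1 = x_1^{-1} x_{n-1}^{-1} x_n x_{n-1} x_1 \\
&= x_1^{-1} x_{n+1} x_1,
\end{align*}
the penultimate equality using $x_1 x_{n+1} x_1^{-1} = x_n$ (from the inductive hypothesis on $x_{n+1}$) and the final one using the consecutive relation $x_{n-1}^{-1} x_n x_{n-1} = x_{n+1}$.

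Once $x_n x_1 = x_1 x_{n+1}$ is established for all $n \geq 2$, the remaining relations $x_n x_k = x_k x_{n+1}$ with $2 \leq k < n$ follow by conjugating $x_{n-k+1} x_1 = x_1 x_{n-k+2}$ by $x_0^{k-1}$ and using the identity $x_0^{k-1} x_m x_0^{-(k-1)} = x_{m+k-1}$. The argument for part (ii) has the same structure, but the consecutive formula $x_{n+1} = x_{n-1}^{-1} x_n x_{n-1}$ is postulated as part of the presentation rather than derived from the relations; the induction then establishes $x_n x_0 = x_0 x_{n+1}$ by the analogous computation, substituting $x_{n+1} = x_0^{-1} x_n x_0$ (inductive hypothesis) into $x_{n+2} = x_n^{-1} x_{n+1} x_n$ and collapsing the conjugations. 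The main obstacle in both parts is identifying the algebraic step that closes the induction; once the consecutive relation (respectively the recursive formula) is available, the computation goes through cleanly.
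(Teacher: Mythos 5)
The paper does not actually prove this theorem: it is quoted as standard background on Thompson's group $F$, with the proof deferred to the references (Cannon--Floyd--Parry, Belk). So there is no in-paper argument to compare against; your proposal follows the classical route of those references, and its core manipulations are sound. Stage 1 is correct: expanding $x_mx_{m-1}=x_{m-1}x_{m+1}$ via the formulas for $x_n$ and cancelling powers of $x_0$ yields an identity independent of $m$ that is exactly the expansion of $x_2x_1=x_1x_3$, so every consecutive relation is a conjugate of the first defining relation and holds in $G$. Stage 2's two-step induction and the final conjugation by a power of $x_0$ are also correct formal consequences; together they give $x_nx_k=x_kx_{n+1}$ in $G$ for all $1\le k<n$.

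The genuine gap is the case $k=0$, which you never address and which is exactly where the exponent convention bites. With $x_n:=x_0^{n-1}x_1x_0^{-(n-1)}$ as in the statement (and as you adopt), the relation $x_nx_0=x_0x_{n+1}$ expands to $x_0^{n-1}x_1x_0^{-(n-2)}=x_0^{n+1}x_1x_0^{-n}$, i.e.\ $x_1x_0^2=x_0^2x_1$; this fails in $F$, since in the infinite presentation $x_0^{-2}x_1x_0^2=x_3\neq x_1$ by the normal form of Theorem 3.2(ii). Hence your stated goal --- ``all the relations $x_nx_k=x_kx_{n+1}$ hold in $G$'' --- cannot be achieved: if the $k=0$ relations held in $G$, surjectivity of $\psi$ would force $[x_1,x_0^2]=1$ in $F$. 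Relatedly, the ``direct calculation that the two given relations hold in $F$'' is asserted but not performed, and it is precisely there that the mismatch between the stated formula for $x_n$ and the infinite presentation $x_nx_k=x_kx_{n+1}$ would surface. The repair is to work with $x_n=x_0^{-(n-1)}x_1x_0^{n-1}$ (equivalently, precompose with the free-group automorphism $x_0\mapsto x_0^{-1}$, $x_1\mapsto x_1$, which interchanges the two relator families); then the $k=0$ relations become identities in the free group on $x_0,x_1$, and every computation in your Stages 1, 2 and the final conjugation step goes through verbatim with the exponent signs reversed. You should say this explicitly, because injectivity of $\psi$ requires exhibiting an inverse $F\to G$ respecting \emph{all} relations of the infinite presentation, including those with $k=0$. (The same convention clash is present in the paper between part (i) of this theorem and Theorem 3.2; part (ii), with $x_2=x_0^{-1}x_1x_0$, uses the consistent convention.)
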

 Surprisingly, although it is usually far more convenient to work with a finite presentation,  in this paper  we work with the infinite presentation of $F$ presented in the following theorem.
\begin{thm}\label{theo-F-infinite-pres}
	\begin{enumerate}[(i)]
		\item  The elements $\{x_0,x_1,x_2,...\}$ generate Thompson's group $F$ and $F$ has an infinite presentation of the form $\langle x_0,x_1,x_2,... \mid x_nx_k=x_kx_{n+1},\, k<n\rangle$.
\item Every element of $F$ can be expressed uniquely in the form $x_0^{a_0}...x_n^{a_n}x_n^{-b_n}...x_0^{-b_0}$, where $a_0,...,a_n,b_0,...,b_n \in \mathbb{N}$, exactly one of $a_n,b_n$ is non-zero and if  both $a_i$ and $b_i$ are not equal $0$, then either $a_{i+1}\neq 0$ or 
$b_{i+1}\neq 0$  for all $i$.
\item Every proper quotient of $F$ is abelian.
	\end{enumerate}
\end{thm}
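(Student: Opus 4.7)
The plan is to establish the three parts in order, leveraging the finite presentations of $F$ recalled just above and classical facts about $F$.

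For part~(i), I would prove the equivalence of the infinite presentation $\operatorname{Gp}\langle x_0, x_1, x_2, \ldots \mid x_n x_k = x_k x_{n+1},\ k < n\rangle$ with the finite presentation $\operatorname{Gp}\langle x_0, x_1\mid x_2 x_0 = x_0 x_3,\ x_3 x_0 = x_0 x_4\rangle$ via Tietze transformations, adjoining each $x_n$ ($n\geq 2$) as a formal generator. With the inductive notation $x_{n+1} := x_{n-1}^{-1} x_n x_{n-1}$ recalled above, the $k = n-1$ case of the target family, namely $x_n x_{n-1} = x_{n-1} x_{n+1}$, is immediate. The $k = 0$ subfamily $x_n x_0 = x_0 x_{n+1}$ is proved by induction on $n$: the cases $n = 2, 3$ are the two Thompson relations, and the step uses $x_{n+1} = x_{n-1}^{-1} x_n x_{n-1}$ together with the inductive identity $x_{n-1} x_0 = x_0 x_n$ (giving $x_{n-1}^{-1} x_0 = x_0 x_n^{-1}$) to rewrite $x_{n+1} x_0$ and then apply the definition $x_{n+2} = x_n^{-1} x_{n+1} x_n$. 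The remaining cases $0 < k < n-1$ follow by an induction on $n - k$ combining these two boundary families. Conversely, every relation of the finite presentation is an instance or an immediate consequence of the infinite family, so the two presentations define isomorphic groups.

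For part~(ii), existence of the normal form uses the oriented rewriting system $x_n x_k \to x_k x_{n+1}$ for $k < n$ (moving small-index positive letters to the left); dually, $x_k^{-1} x_n^{-1} \to x_{n+1}^{-1} x_k^{-1}$ moves small-index negative letters to the right. An induction on word length and on the maximum index present brings any word to the shape $x_0^{a_0} \cdots x_n^{a_n} x_n^{-b_n} \cdots x_0^{-b_0}$. The carry condition (if $a_i, b_i > 0$ then $a_{i+1} \neq 0$ or $b_{i+1} \neq 0$) is enforced by noticing that if it fails one may cancel a pair $x_i x_i^{-1}$ after shifting higher-index letters back through the relations, strictly decreasing length. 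Uniqueness is proved by interpreting each normal form as a specific piecewise-linear dyadic rearrangement of $[0,1]$ via the action of $x_0, x_1$ recalled in the preceding subsection: distinct normal forms give rise to distinct sets of breakpoints and slopes, hence to distinct elements of $F$. An alternative route is a direct confluence (diamond-lemma) analysis of the rewriting system.

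For part~(iii), I would show that every nontrivial normal subgroup $N \trianglelefteq F$ contains the commutator subgroup $F' = [F, F]$, which immediately yields that $F/N$ is a quotient of $F/F' \cong \mathbb{Z}^2$ and hence abelian. This relies on two classical facts about $F$: $F'$ is a simple group, and $Z(F) = \{1\}$. Given nontrivial $N$, the intersection $N \cap F'$ is normal in $F'$, so by simplicity equals either $F'$ or $\{1\}$. In the former case $F' \subseteq N$ as desired. In the latter, $[F, N] \subseteq F' \cap N = \{1\}$ forces $N$ to be central, hence trivial, contradicting the hypothesis. The main obstacle in the whole theorem is the uniqueness statement in~(ii): a careful confluence argument or an explicit computation in the dyadic-rearrangement model is required to rule out nontrivial equivalences between distinct normal forms. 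By contrast, (i) is a well-organized Tietze computation and (iii) reduces to invoking the classical simplicity of $F'$ together with centerlessness of $F$.
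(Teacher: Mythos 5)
The paper does not actually prove this theorem: it is recalled in Section~3 as classical background on Thompson's group $F$, with the proofs delegated to the cited literature (Thompson, Cannon--Floyd--Parry, Belk), and the paper explicitly declines to go into the determination of the presentation or the normal form. So there is no in-paper argument to compare yours against; what can be said is that your outline reproduces the standard proofs from those references, and it is essentially correct. In~(i) you rightly work with the second finite presentation, whose convention $x_{n+1}=x_{n-1}^{-1}x_nx_{n-1}$ is the one compatible with the relations $x_nx_k=x_kx_{n+1}$ (the first presentation as printed in the paper has the conjugation going the wrong way); your induction for the $k=0$ family checks out ($x_{n+1}x_0=x_{n-1}^{-1}x_nx_{n-1}x_0=x_{n-1}^{-1}x_nx_0x_n=x_{n-1}^{-1}x_0x_{n+1}x_n=x_0x_n^{-1}x_{n+1}x_n=x_0x_{n+2}$), and the general case does follow by a double induction, though your phrase ``induction on $n-k$'' glosses over the fact that the step for $x_nx_{k+1}=x_{k+1}x_{n+1}$ uses relations both at level $k-1$ and at level $k$ with smaller $n$; this should be set up as an induction on $k$ with an inner induction on $n$. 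In~(ii) the existence of the seminormal form and the length-decreasing cancellation enforcing the carry condition are fine, and you correctly identify uniqueness as the genuine burden: ``distinct normal forms give distinct breakpoints and slopes'' is the conclusion of the Cannon--Floyd--Parry/Belk analysis of tree diagrams or leaf exponents, not a one-line observation, so that step would need to be either carried out or cited. Part~(iii) is a complete and correct argument granting the two classical inputs (simplicity of $F'$ and triviality of $Z(F)$), which is exactly how the literature derives the statement.
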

There  are several questions  about Thompson's group $F$  that are still  open. It is still not known wether $F$ is an automatic  group. It is known that $F$ is not elementary amenable and that it does not conbtain the free group of rank $2$, but  it is still unknown wether $F$ is an amenable  group.

\section{Definition of partial set-theoretic solutions and proof of Theorem $1$ }

\subsection{Definition of partial set-theoretic solutions  and their properties}
 \begin{defn} \label{defn-partial-solution}
 	Let $X$ be a non-empty set. Let $r: \mathcal{D}\rightarrow \mathcal{R}$  be a map, where $ \mathcal{D}, \mathcal{R} \subseteq X \times X$.  We write $r(x,y)=(\sigma_{x}(y),\gamma_{y}(x))$,  where   $\sigma_{x}:\mathcal{D}_{\sigma_{x}}\rightarrow \mathcal{R}_{\sigma_{x}}$ and $\gamma_{y}:\mathcal{D}_{\gamma_{y}}\rightarrow \mathcal{R}_{\gamma_{y}}$, with  $\mathcal{D}_{\sigma_{x}},\mathcal{R}_{\sigma_{x}}, \mathcal{D}_{\gamma_{y}}, \mathcal{R}_{\gamma_{y}} \subseteq X$.     With this notation,  $(x,y)\in \mathcal{D}$  if and only if $y \in  \mathcal{D}_{\sigma_x}$ and $x \in \mathcal{D}_{\gamma_y}$.  Let 
 	$r^{i,i+1}: X^{i-1}\times \mathcal{D}\times X^{k-i-1}\rightarrow X^{i-1}\times \mathcal{R}\times \times X^{k-i-1}$ be  the maps defined by $r^{i,i+1}=Id_{X^{i-1}}\circ r \circ Id_{X^{k-i-1}}$.
 	\begin{enumerate}[(i)]
 			\item The pair $(X,r)$ is \emph{non-degenerate}, if     for every  $x,y \in X$, $\sigma_{x}:\mathcal{D}_{\sigma_{x}}\rightarrow \mathcal{R}_{\sigma_{x}}$ and $\gamma_{y}:\mathcal{D}_{\gamma_{y}}\rightarrow \mathcal{R}_{\gamma_{y}}$ are bijective,  (i.e. $\sigma_{x}$ and $\gamma_{y}$ are partial bijections of $X$).
 			
 				\item 	The pair $(X,r)$ is  \emph{involutive} if   for all pairs  $(x,y)\in X^2$,  $x \in \mathcal{D}_{\gamma_{y}}$	if and only if   $y \in  \mathcal{D}_{\sigma_{x}}$, and additionally  if  $r(x,y)$  is defined,  then $r^{2}(x,y)$  is also defined and satisfies $r\circ r = Id_{X^2}$, that is  $r^{2}(x,y)=(x,y)$. 
 				
 			\item  The pair $(X,r)$ is  \emph{braided} if for all 	 $x,y,z\in X$, 	$r^{12}r^{23}r^{12}(x,y,z)=r^{23}r^{12}r^{23}(x,y,z)$, whenever    $r^{12}r^{23}r^{12}(x,y,z)$ and	$r^{23}r^{12}r^{23}(x,y,z)$ are  defined.
 		\item  The pair $(X,r)$ is    \emph{square-free},  if  for every $x\in X$, $(x,x) \in \mathcal{D}$ and $r(x,x)=(x,x)$. 
 	\end{enumerate}
 If $(X,r)$ is  braided,   we  call  $(X,r)$  \emph{a partial set-theoretic  solution}.  If $(X,r)$ is  a non-degenerate, involutive  partial set-theoretic  solution,  we call it \emph{a partial solution}.
\end{defn}

 \begin{ex}\label{ex-square-free-partial-sol}
 	Let $X=\{x_0,x_1,x_2\}$.   Let $r: \mathcal{D} \rightarrow \mathcal{R}$, be defined by $r(x,y)=(\sigma_{x}(y),\gamma_{y}(x))$, with  $\mathcal{D}=\mathcal{R}=\{(x_0,x_2), (x_1,x_2),  (x_2,x_0), (x_2,x_1),(x_0,x_0),(x_1,x_1),(x_2,x_2)\}$.  The functions $\sigma_{x}$ and $\gamma_{x}$ are described  like permutations, with  a specification of their domain of definition. Let  $\sigma_0=\gamma_0=(0)(2)$, $\mathcal{D}_{\sigma_0}=\mathcal{D}_{\gamma_0}=\{0,2\}$; 
 	$\sigma_1=\gamma_1=(1)(2)$, $\mathcal{D}_{\sigma_1}=\mathcal{D}_{\gamma_1}=\{1,2\}$;  $\sigma_2=\gamma_2=(0,1)(2)$, $\mathcal{D}_{\sigma_2}=\mathcal{D}_{\gamma_2}=\{0,1,2\}=X$. 
 The functions	$\sigma_0,\gamma_0,\sigma_1,\gamma_1$ are partial bijections of $X$ and $\sigma_2,\gamma_2$ are bijections of $X$. A technical computation shows that $(X,r)$ is a square-free partial solution, with $r(x_0,x_2) =(x_2,x_1)$, $r(x_2,x_1)=(x_0,x_2)$, $r (x_1,x_2)=(x_2,x_0)$, and $r (x_2,x_0)= (x_1,x_2)$.
 \end{ex}
 Lemma \ref{lem-formules-invol+braided} can be directly extended to partial solutions in the following way:
 \begin{lem}\label{lem-formules-invol+braided-partial}
 	Let $(x,y),(y,z)\in \mathcal{D}$, that is $x \in \mathcal{D}_{\gamma_y}$, $y \in  \mathcal{D}_{\sigma_x}\cap \mathcal{D}_{\gamma_z}$, $z \in  \mathcal{D}_{\sigma_y}$.
 	\begin{enumerate}[(i)]
 	
 			\item  $(X,r)$ is involutive  if  and only if  $\gamma_y (x)\in  \mathcal{D}_{\sigma_{\sigma_x(y)}}$, $\sigma_x(y)\in  \mathcal{D}_{\gamma_{\gamma_y(x)}}$      and  additionally 
 			\begin{gather}
 			 \sigma_{\sigma_x(y)}\gamma_{y}(x)=x  \label{eq-inv1}\\
 			 	\gamma_{\gamma_y(x)}\sigma_x(y)=y  \label{eq-inv2}
 			\end{gather}
 				\item 	If $(X,r)$ is  involutive, then  $(x,y)\in \mathcal{D}$  if and only if  $y \in  \mathcal{D}_{\sigma_x}$ and $x \in \mathcal{D}_{\sigma^{-1}_{\sigma_x(y)}}$, and 	in this case $\sigma_x(y)=\gamma^{-1}_{\gamma_y(x)}$  and $\gamma_y(x)=\sigma^{-1}_{\sigma_x(y)}(x)$.
 		
 	\item   $(X,r)$ is  braided if  and only if  whenever 	$x \in  \mathcal{D}_{\gamma_z\gamma_y}$, 
 		$z \in  \mathcal{D}_{\sigma_x\sigma_y}$,  	$y \in \mathcal{D}_{\sigma_{\gamma_{\sigma_y(z)}(x)}\gamma_z}$,  the following equations hold:\\
 	 \begin{gather}
 	\sigma_x\sigma_y=\sigma_{\sigma_x(y)}\sigma_{\gamma_y(x)}  \label{eqn-sigma-par}\\
 	 \gamma_z\gamma_y=\gamma_{\gamma_z(y)}\gamma_{\sigma_y(z)}   \label{eqn-gamma-par}\\
 	\gamma_{\sigma_{\gamma_y(x)}(z)}(\sigma_x(y))=\sigma_{\gamma_{\sigma_y(z)}(x)}(\gamma_z(y))
 	\end{gather} 
 	\end{enumerate}
 \end{lem}

 \begin{defn}
 	Let  $(X,r)$   be a  partial  set-theoretic  solution. 
 	The \emph{structure group} of $(X,r)$ is 	$G(X,r)=\operatorname{Gp} \langle X\mid\ xy =\sigma_x(y)\gamma_y(x)\ ;\ (x,y)\in \mathcal{D} \rangle$.  
 		The \emph{structure inverse monoid} of $(X,r)$ is 	$\operatorname{IM}(X,r)=\operatorname{Inv} \langle X\mid\ xy =\sigma_x(y)\gamma_y(x)\ ;\ (x,y)\in \mathcal{D} \rangle$.  
 \end{defn}

A partial solution $(X,r)$ is \emph{trivial} if  for every  $x \in X$, $\sigma_{x}=\operatorname{Id}_{\mathcal{D}_{\sigma_x}}$, $\gamma_{x}=\operatorname{Id}_{\mathcal{D}_{\gamma_x}}$. 
 So, for all pairs  $(x,y)\in \mathcal{D}$,   $r(x,y)=(y,x)$, that is 
 the  structure group of  a  trivial partial solution is  a partially commutative group  (or a  right-angled Artin group)  with generating set  $X$, and defining relations  that depend on $\mathcal{D}_{\sigma_x}$ and  $\mathcal{D}_{\gamma_x}$. 
 \begin{ex}\label{ex-trivial-partial-sol}
 	Let $X=\{x_0,x_1,x_2\}$.   Let $r: \mathcal{D} \rightarrow \mathcal{R}$, be defined by $r(x,y)=(\sigma_{x}(y),\gamma_{y}(x))$, with  $\mathcal{D}=\mathcal{R}=\{(x_0,x_2), (x_1,x_2),  (x_2,x_0), (x_2,x_1),(x_0,x_0),(x_1,x_1),(x_2,x_2)\}$.  Let  $\sigma_0=\gamma_0=(0)(2)$, $\mathcal{D}_{\sigma_0}=\mathcal{D}_{\gamma_0}=\{0,2\}$; 
 	$\sigma_1=\gamma_1=(1)(2)$, $\mathcal{D}_{\sigma_1}=\mathcal{D}_{\gamma_1}=\{1,2\}$;  $\sigma_2=\gamma_2=(0)(1)(2)$, $\mathcal{D}_{\sigma_2}=\mathcal{D}_{\gamma_2}=\{0,1,2\}$. So,  $r(x_0,x_2) =(x_2,x_0)$, $r(x_2,x_0)=(x_0,x_2)$, $r (x_1,x_2)=(x_2,x_1)$, $r (x_2,x_1)= (x_1,x_2)$, and $r (x_i,x_i)= (x_i,x_i)$, for $0 \leq i \leq 2$, that is  $(X,r)$ is a trivial  partial solution, with  structure group  the partially commutative group $\operatorname{Gp}\langle x_0,x_1,x_2 \mid x_0x_2 =x_2x_0, x_1x_2=
 	x_2x_1\rangle$.
 \end{ex}
 Two  partial set-theoretic solutions $(X,r)$ and $(X',r')$ are \emph{isomorphic} if there is a partial bijection $\alpha:X \rightarrow X'$ such that   if  $(x,y)\in \mathcal{D}$,  then  $(\alpha(x),\alpha(y))\in \mathcal{D}' $ and  $r(x,y)\in \mathcal{D}_{\alpha^2}$,  and additionally     $(\alpha \times \alpha) \circ r=r'\circ (\alpha \times \alpha)$.  If  $(X,r)$ and $(X',r')$ are isomorphic, then $G(X,r) \simeq G(X',r')$, with  $G(X,r)$ and $G(X',r')$ their respective structure groups. 
\begin{defn}
	Let  $(X,r)$ be a non-degenerate involutive partial set-theoretic solution. 
	\begin{enumerate}
		\item  A set $Y \subset X$ is  \emph{invariant} if   $r(Y \times Y)\subseteq Y \times Y$, whenever $r(Y \times Y)$ is defined.
		\item An invariant  subset $Y \subset X$  is  \emph{non-degenerate} if  $(Y,r\mid_{Y^2})$ is a non-degenerate involutive  partial set-theoretic solution of the YBE. 
		\item  $(X,r)$  is \emph{decomposable} if it is a union of two non-empty disjoint non-degenerate invariant subsets. Otherwise, it is called \emph{indecomposable}.
	\end{enumerate}
	
\end{defn}
	The \emph{retract} relation $\sim$ on the set $X$ is defined  by $x \sim y$ if  $\mathcal{D}_{\sigma_x}= \mathcal{D}_{\sigma_y}$ and  $\sigma_x=\sigma_y$. There is a natural induced solution $Ret(X,r)=(X/\sim,r)$, called the \emph{the retraction of $(X,r)$}, defined by $r'([x],[y])=([\sigma_{x}(y)],[\gamma_y(x)])$.
A non-degenerate involutive partial set-theoretic solution  $(X,r)$   is called \emph{a multipermutation partial solution of level $m$} if $m$ is the smallest natural number such that the solution $\mid Ret^m(X,r)\mid=1$, where 
$Ret^k(X,r)=Ret(Ret^{k-1}(X,r))$, for $k>1$. If such an $m$ exists, $(X,r)$   is also called \emph{retractable}, otherwise it is called \emph{irretractable}.

\subsection{Characterization of square-free  partial solutions and proof of Theorem $1$}
An important characterisation of  non-degenerate involutive set-theoretic solutions  is  presented in Proposition  \ref{prop-etingof} (\cite[p.176-180]{etingof}). We extend some of the properties of the classical set-theoretic solutions to the partial ones.

From now on, we use the following notation.	Let  $(X,r)$ be a non-degenerate involutive partial  set-theoretic solution, defined by  $r(x,y)=(\sigma_{x}(y),\gamma_{y}(x))$,   $(x,y) \in \mathcal{D}$,  with structure inverse monoid $\operatorname{IM}(X,r)$. We assume that for every $x \in X$, $x\in \mathcal{R}_{\sigma_x}$. This property always holds for square-free solutions. Let   $\operatorname{I}_X$ denote the symmetric inverse  monoid of $X$.  \\

Let $A$ denote the commutative inverse monoid, that is $A$ is  the  set of partial functions $f: X \rightarrow  \mathbb{Z}$ with finite support, with the following operation: 
for any two elements $f,f' \in A$, the  domain of the sum $f+f'$ is $\mathcal{D}_{f}\cap \mathcal{D}_{f'}$ and the operation in $A$ is defined pointwise, that is  $(f+f')(x)=f(x)+f'(x)$, for every $x \in \mathcal{D}_{f}\cap \mathcal{D}_{f'}$.

 \begin{lem}\label{lem-IX-acts-A}
	The symmetric inverse monoid   $ \operatorname{I}_X$ acts  (totally) on $A$ by endomorphisms:  \[\tau \bullet  f= f \circ \tau^{-1}\]
	  where $\tau \in \operatorname{I}_X$ is a partial bijection of $X$ and   $f \in A$, 	$f:X\rightarrow\mathbb{Z}$ is a partial function with finite support.	It is described  by the following diagram,
 		\begin{figure}[H] 
 		\begin{tikzpicture}
 			\matrix (m) [matrix of math nodes,row sep=3em,column sep=4em,minimum width=2em]
 			{ X & X \\
 				& \mathbb{Z}\\};
 			\path[-stealth]
 		
 			(m-1-2) edge node [right] {$f$} (m-2-2)
 			(m-1-1) edge node [above] {$\tau^{-1}$} (m-1-2)
 			(m-1-1) edge [double,red]node [left] {} (m-2-2)	;
 			\end{tikzpicture}
 			
 		\end{figure}
 \end{lem}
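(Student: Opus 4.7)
The plan is to verify directly the four axioms of Definition \ref{defn-action}, treating $A$ as an inverse semigroup with zero (the empty partial function $\emptyset$) and identity $0_X$. Throughout, recall that for $\tau\in\operatorname{I}_X$, the element $\tau^{-1}$ is the set-theoretic inverse partial bijection, with $\mathcal{D}_{\tau^{-1}}=\mathcal{R}_\tau$ and $\mathcal{R}_{\tau^{-1}}=\mathcal{D}_\tau$, and composition of partial maps is computed on the largest subset where it makes sense.

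First I would check that $\tau\bullet f:=f\circ\tau^{-1}$ lies in $A$. The composition $f\circ\tau^{-1}$ is the partial function with domain
\[
\mathcal{D}_{\tau\bullet f}\;=\;\{y\in\mathcal{R}_\tau \mid \tau^{-1}(y)\in\mathcal{D}_f\}\;=\;\tau(\mathcal{D}_\tau\cap\mathcal{D}_f),
\]
which is finite (as the image of a finite set under the partial bijection $\tau$); so $\tau\bullet f\in A$. Next, axiom (iii) is immediate, since the identity of $\operatorname{I}_X$ is $\operatorname{Id}_X$ and $\operatorname{Id}_X^{-1}=\operatorname{Id}_X$, so $1\bullet f=f\circ\operatorname{Id}_X=f$. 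Axiom (iv) is also immediate: $\tau\bullet\emptyset=\emptyset\circ\tau^{-1}=\emptyset$.

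For axiom (ii), that each $\tau\bullet(-)$ is a semigroup endomorphism, I would compare the domains and values of $\tau\bullet(f+f')$ and $(\tau\bullet f)+(\tau\bullet f')$. The first has domain $\tau\bigl(\mathcal{D}_\tau\cap\mathcal{D}_f\cap\mathcal{D}_{f'}\bigr)$, while the second has domain $\tau(\mathcal{D}_\tau\cap\mathcal{D}_f)\cap\tau(\mathcal{D}_\tau\cap\mathcal{D}_{f'})$. Since $\tau$ is a bijection on $\mathcal{D}_\tau$, it commutes with intersections of subsets of $\mathcal{D}_\tau$, so the two domains coincide. On this common domain, for $y=\tau(x)$ one has $(f+f')(\tau^{-1}(y))=f(x)+f'(x)=(\tau\bullet f)(y)+(\tau\bullet f')(y)$, giving the pointwise equality. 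For axiom (i), the action property, I would use the fact that inversion in $\operatorname{I}_X$ is an involutive anti-homomorphism, so $(\sigma\tau)^{-1}=\tau^{-1}\sigma^{-1}$, and then associativity of composition of partial functions gives
\[
(\sigma\tau)\bullet f \;=\; f\circ(\sigma\tau)^{-1} \;=\; f\circ\tau^{-1}\circ\sigma^{-1} \;=\; (\tau\bullet f)\circ\sigma^{-1} \;=\; \sigma\bullet(\tau\bullet f).
\]

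The word ``totally'' in the statement refers to the fact that $\tau\bullet f$ is defined for every pair $(\tau,f)\in\operatorname{I}_X\times A$ (as opposed to being itself a partial operation), which is built into the argument above. The only mildly subtle point, and the one I expect to need the most care, is the domain bookkeeping in axiom (ii): one must use the injectivity of $\tau$ on $\mathcal{D}_\tau$ to push the intersection through $\tau$. Everything else reduces to the standard fact that partial-function composition is associative and that inversion in $\operatorname{I}_X$ reverses order, so the verification of the remaining axioms is purely formal.
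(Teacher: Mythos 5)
Your proposal is correct and follows essentially the same route as the paper: both verify the axioms of Definition \ref{defn-action} directly, using that $(\sigma\tau)^{-1}=\tau^{-1}\sigma^{-1}$ and associativity of partial-map composition for the action property, and the pointwise definition of $+$ for the endomorphism property. Your version is in fact slightly more careful than the paper's on two minor points — the domain bookkeeping $\tau(\mathcal{D}_\tau\cap\mathcal{D}_f\cap\mathcal{D}_{f'})=\tau(\mathcal{D}_\tau\cap\mathcal{D}_f)\cap\tau(\mathcal{D}_\tau\cap\mathcal{D}_{f'})$ via injectivity of $\tau$, and identifying the absorbing element of $A$ as the empty partial function when checking axiom (iv) — but these are refinements of the same argument, not a different one.
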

\begin{proof}
Let  $\varphi$ be the map from $\operatorname{I}_X$ to the monoid of endomorphisms of $A$, defined by \\$\varphi(\tau)(f) = f \circ \tau^{-1}$, $f\in A$, $\tau \in \operatorname{I}_X$. First, we show that  $\varphi(\tau)$ is indeed an endomorphism of $A$. Let $f,f'\in A$. Then  $\varphi(\tau)( f+f')= (f +f')\circ \tau^{-1}=f \circ \tau^{-1}+f '\circ \tau^{-1}=\varphi(\tau)(f)+\varphi(\tau)(f')$.  We show that $\varphi$ is a  homomorphism of monoids. Let $\tau,\nu \in \operatorname{I}_X$,  then \\$\varphi(\tau\circ\nu)(f) = f \circ (\tau \circ \nu)^{-1}=  (f \circ \nu^{-1} )\circ \tau^{-1}=\varphi(\tau)(f \circ \nu^{-1}) = \varphi(\tau)\varphi(\nu)(f )$.  Additionally,  $\operatorname{Id}_X$ acts trivially on every $f \in A$, that is $f \circ  \operatorname{Id}_{X}^{-1}=f$, and for every  $\tau \in \operatorname{I}_X$, $\tau \bullet 0_X=0_X$, where $0_X\in A$ is the zero function.  So, from Definition \ref{defn-action}, $\operatorname{I}_X$ acts   on $A$ by endomorphisms.
\end{proof}

	Note that   $\varphi(\tau)$ is not necessarily an automorphism of $A$, since, for any two elements $f,f' \in A$, the equality of the maps $(\varphi(\tau))(f)=(\varphi(\tau))(f')$ does not necessarily imply $f=f'$. Indeed, if  $f \circ \tau^{-1}= f' \circ \tau^{-1}$, then by precomposing each function with  $\tau$, we have $f \circ  \operatorname{Id}_{\mathcal{D}_\tau}=f' \circ  \operatorname{Id}_{\mathcal{D}_\tau}$,  that is these two partial functions have the same domain $\mathcal{D}_\tau \cap\mathcal{D}_f\,=\,\mathcal{D}_\tau \cap \mathcal{D}_{f'} $ and 
		$f(x)=f'(x)$, for every $x \in \mathcal{D}_\tau \cap\mathcal{D}_f$. But this does not imply  necessarily  that  $\mathcal{D}_f=\mathcal{D}_{f'}$, nor $f=f'$.

We show the structure  inverse monoid $\operatorname{IM}(X,r)$ acts on  $X$ by partial permutations and that this action extends to an action of $\operatorname{IM}(X,r)$  on  $A$.
\begin{lem}\label{lem-existence-epsilon}
	\begin{enumerate}[(i)]
		\item  The map $\alpha: \operatorname{IM}(X,r) \rightarrow \operatorname{I}_X$, defined by  $\alpha(x)= \sigma_{x}$,  $x \in X$, is a  homomorphism of monoids. 
		\item There is an action of $\operatorname{IM}(X,r)$ on  $X$ by partial permutations, defined by:
	\begin{align*}
	x_i\bullet x_j= x_{\sigma_i(j)}\\
	g\bullet x_j= x_{\sigma_g(j)}
	\end{align*} 
		 where $x_i,g \in \operatorname{IM}(X,r) $, $x_j \in X$, $\sigma_g=\alpha(g)$.
		 \item There is an action of $\operatorname{IM}(X,r)$ on  itself  by endomorphisms, defined  by:
		 	\begin{gather*}
		 g\bullet x_j= x_{\sigma_g(j)}\\
		 g\bullet h= g\bullet x_{j_1}...x_{j_k}=x_{\sigma_g(j_1)}...x_{\sigma_g(j_k)}
		 \end{gather*} 
		
		 where $g ,h\in \operatorname{IM}(X,r) $.
		\item There is an action of $\operatorname{IM}(X,r)$ on  $A$ by endomorphisms, defined  by:
	\[g\bullet f= \alpha (g) \bullet f=f \circ \sigma_g^{-1}\]
where $g \in \operatorname{IM}(X,r) $, $f\in A$

	\end{enumerate}
	\end{lem}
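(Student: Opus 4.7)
My plan is to establish (i) first via the universal property of the inverse monoid presentation, deduce (ii) and (iv) as near-immediate consequences, and devote the main effort to (iii).

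For part (i), each $\sigma_x$ is a partial bijection of $X$ and hence lies in $\operatorname{I}_X$, so the assignment $x \mapsto \sigma_x$ extends uniquely to a monoid homomorphism $\tilde{\alpha}: \operatorname{FIM}(X) \to \operatorname{I}_X$ by the universal property of the free inverse monoid. To show $\tilde{\alpha}$ descends to a homomorphism $\alpha$ on $\operatorname{IM}(X,r) = \operatorname{Inv}\langle X \mid xy = \sigma_x(y)\gamma_y(x),\ (x,y) \in \mathcal{D}\rangle$, I must verify that for each $(x,y) \in \mathcal{D}$ the equality $\sigma_x \circ \sigma_y = \sigma_{\sigma_x(y)} \circ \sigma_{\gamma_y(x)}$ holds in $\operatorname{I}_X$. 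Equality of values is Equation \ref{eqn-alpha-hom}, the first braided identity of Lemma \ref{lem-formules-invol+braided-partial}(iii); equality of domains follows by rewriting both domains as the same subset of $X$ using involutiveness (Lemma \ref{lem-formules-invol+braided-partial}(i)--(ii)).

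Parts (ii) and (iv) are then nearly immediate. For (ii), Definition \ref{defn-partial-action} applied to the homomorphism $\alpha$ produced in (i) furnishes an action of $\operatorname{IM}(X,r)$ on $X$ by partial permutations, and unwinding the definition recovers the formula $g \bullet x_j = x_{\sigma_g(j)}$ with $\sigma_g = \alpha(g)$. For (iv), I compose $\alpha$ with the homomorphism from $\operatorname{I}_X$ to the monoid of endomorphisms of $A$ supplied by Lemma \ref{lem-IX-acts-A}; this gives a monoid homomorphism from $\operatorname{IM}(X,r)$ into the endomorphism monoid of $A$, with the explicit formula $g \bullet f = f \circ \sigma_g^{-1}$.

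Part (iii) is where the substantive work lies. The rule $g \bullet (x_{j_1}\cdots x_{j_k}) = x_{\sigma_g(j_1)}\cdots x_{\sigma_g(j_k)}$ is well-defined on the free monoid and extends to $X \cup X^*$ by $g \bullet x_j^* = (g \bullet x_j)^*$; the Wagner--Preston congruence is preserved because any semigroup endomorphism sends idempotents to idempotents, which commute in any inverse monoid. The substantive point is compatibility with the quadratic defining relations: for each $(x,y) \in \mathcal{D}$ I must show first that $(\sigma_g(x), \sigma_g(y)) \in \mathcal{D}$, and then that $\sigma_g(x)\sigma_g(y) = \sigma_g(\sigma_x(y))\sigma_g(\gamma_y(x))$ as elements of $\operatorname{IM}(X,r)$. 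I would apply the defining relation of $\operatorname{IM}(X,r)$ at the pair $(\sigma_g(x), \sigma_g(y))$ to rewrite the left-hand side, and then match to the right-hand side by invoking the second and third braided identities of Lemma \ref{lem-formules-invol+braided-partial}(iii) together with involutiveness (Equation \ref{eq-inv}). The main obstacle is the careful domain bookkeeping needed to ensure every composition is defined exactly where required, together with the interlocking use of all three braided identities and involutiveness, considerably more intricate than the single application needed in part (i).
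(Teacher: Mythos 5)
Your handling of parts (i), (ii) and (iv) is correct and follows the paper's own route: $\alpha$ is obtained from the universal property of the presentation once Equation \ref{eqn-alpha-hom} is verified on the defining relations, (ii) is Definition \ref{defn-partial-action} applied to $\alpha$, and (iv) is $\alpha$ composed with the action of Lemma \ref{lem-IX-acts-A}; your extra care with equality of domains is a refinement, not a deviation.

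The gap is in (iii), precisely the part you single out as substantive. The compatibility you plan to establish --- that $(\sigma_g(x),\sigma_g(y))\in\mathcal{D}$ and $x_{\sigma_g(x)}x_{\sigma_g(y)}=x_{\sigma_g(\sigma_x(y))}x_{\sigma_g(\gamma_y(x))}$ in $\operatorname{IM}(X,r)$ --- says exactly that each $\sigma_g$ intertwines with $r$, i.e.\ is a (partial) morphism of the solution. That is not among the identities of Lemma \ref{lem-formules-invol+braided-partial} and cannot be deduced from them together with involutiveness: those identities hold for every non-degenerate involutive partial solution satisfying the standing assumption $x\in\mathcal{R}_{\sigma_x}$, yet the compatibility can fail for such solutions. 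Indeed a classical solution is the special case $\mathcal{D}=X\times X$, and in Example \ref{exemple:exesolu_et_gars} the defining relation $x_1x_2=x_3^2$, relabelled letterwise by $\sigma_{x_1}=g_1=(2,3)$, becomes $x_1x_3=x_2^2$; this is false in the structure monoid and group (the only quadratic relation involving $x_1x_3$ is $x_1x_3=x_2x_4$, and the structure monoid of a solution of $I$-type embeds in its group), hence false in $\operatorname{IM}(X,r)$ via the canonical homomorphism $\operatorname{IM}(X,r)\rightarrow G(X,r)$. So the step ``match to the right-hand side by invoking the second and third braided identities together with involutiveness'' cannot be carried out at this level of generality: one needs the genuinely stronger property that the maps $\sigma_g$ are partial automorphisms of $(X,r)$, which holds for the Thompson solution $\mathcal{F}$ (and can be checked there directly from the definition of the maps $\sigma_n$) but is not a formal consequence of braiding and involutivity. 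To be fair, the paper's own proof of (iii) is only the assertion that the action on $X$ ``extends to an action by endomorphisms on itself'', so you are attempting to fill a real gap; but the route you propose does not close it, and any correct argument must either add the automorphism hypothesis or verify it in the specific case at hand.
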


 \begin{proof}
 	$(i),(ii),(iii)$ From the definition of  the structure  inverse monoid,  the defining relations in $\operatorname{IM}(X,r)$, have the form $xy=\sigma_x(y)\gamma_y(x)$, $(x,y) \in \mathcal{D}$. So, from Equation  \ref{eqn-alpha-hom},  $\alpha: \operatorname{IM}(X,r) \rightarrow \operatorname{I}_X$   is a  homomorphism of monoids. That is, $G(X,r)$ acts on  $X$ by partial permutations, and this action extends to an  action by endomorphisms on itself.\\
$(iv)$	Let $g \in \operatorname{IM}(X,r)$. Then $g$ has a  homomorphic image 	$\alpha(g)=\sigma_g \in \operatorname{I}_{X}$, and there is an action of $\operatorname{I}_X$  on $A$ by endomorphisms,  from Lemma \ref{lem-IX-acts-A}. So,  $\operatorname{IM}(X,r)$ acts  on  $A$ by endomorphisms.
	\end{proof}
Note that, since $\alpha$ is a  homomorphism of monoids,  for every $x \in X$, $\alpha(x_i^{*})= \sigma_{i}^{-1}$,  and $\alpha(x_ix_i^{*})= \sigma_i\sigma_{i}^{-1}= Id_{\mathcal{R}_{\sigma_i}}$,  $\alpha(x_i^{*}x_i)= \sigma_{i}^{-1}\sigma_i=\operatorname{Id}_{\mathcal{D}_{\sigma_i}}$.

\begin{defn}
	For every  $x \in X$, we define a partial function  with finite support, 
$ \delta_x$,   such that 
$ \mathcal{D}_{\delta_x}=\mathcal{R}_{\sigma_x}\subseteq X$  and $\delta_x:\mathcal{D}_{\delta_x}\rightarrow\mathbb{Z} $ is defined by:
\begin{gather*}
\delta_x(y)= \left\{
\begin{array}{ll}
1 & y=x\\
0 & y\in\mathcal{R}_{\sigma_x}, \,\,  y\neq x \\
\end{array} 
\right.
\end{gather*}
Furthermore, $\delta_x(y)$ is not defined for  $y \in X \setminus \mathcal{R}_{\sigma_x}$.
\end{defn}
In what follows, we give a characterization of  square-free partial solutions, in analogy with the characterization of classical solutions  given in Proposition \ref{prop-etingof}. 
\begin{thm}	\label{theo-pi-injective}
	Let  $(X,r)$ be a square-free, non-degenerate involutive partial  set-theoretic solution, defined by  $r(x,y)=(\sigma_{x}(y),\gamma_{y}(x))$,   $(x,y) \in \mathcal{D}$, with structure inverse monoid $\operatorname{IM}(X,r)$.  Let  $\pi$  be the map defined by:
	\begin{gather*}
	\pi:\operatorname{IM}(X,r)\rightarrow    A\\
	\pi(x_i)= \delta_i  \;\;;\;\; \pi(x_i^*)=- \delta_i\circ \sigma_i  \\
\pi(gh)	= \pi(g)+g \bullet \pi(h) \;\;;\pi(g^*)=-\pi(g)\circ \sigma_g
	\end{gather*}
		Then $\pi$  is an injective map.
\end{thm}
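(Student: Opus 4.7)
The plan is to prove the theorem in two steps: first establish that $\pi$ is well-defined on $\operatorname{IM}(X,r)$, then prove injectivity.

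For well-definedness, I would first define $\pi$ on the free inverse monoid $\operatorname{FIM}(X)$ using the generator values $\pi(x_i)=\delta_i$ and $\pi(x_i^*)=-\delta_i\circ\sigma_i$, and extend by the $1$-cocycle formula $\pi(gh)=\pi(g)+g\bullet\pi(h)$, where the action of $\operatorname{IM}(X,r)$ on $A$ is as in Lemma \ref{lem-existence-epsilon}(iv). Associativity of the extension is routine since the action is by endomorphisms. I would then verify that $\pi$ respects the defining relations of $\operatorname{IM}(X,r)$. The Yang-Baxter relations $xy=\sigma_x(y)\gamma_y(x)$ for $(x,y)\in\mathcal{D}$ reduce, via the cocycle formula, to the pointwise identity
\[
\delta_x+\delta_y\circ\sigma_x^{-1}\;=\;\delta_{\sigma_x(y)}+\delta_{\gamma_y(x)}\circ\sigma_{\sigma_x(y)}^{-1},
\]
which I would check case by case using the involutive identity $\sigma_{\sigma_x(y)}\gamma_y(x)=x$ from Lemma \ref{lem-formules-invol+braided-partial} and the square-free hypothesis (which guarantees $x\in\mathcal{R}_{\sigma_x}$, so that $\delta_x$ is defined at $x$ with value $1$). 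The inverse-monoid axioms $gg^*g=g$ and commutation of idempotents $ee'=e'e$ follow from the commutativity of $A$, together with a telescoping computation via the cocycle formula that one can verify already in the case $\pi(x_ix_i^*x_i)=\pi(x_i)$.

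For injectivity, the strategy is to reconstruct $g$ from $\pi(g)$, in the spirit of the bijective $1$-cocycle of Proposition \ref{prop-etingof}. I would proceed by induction on the total weight $\|\pi(g)\|=\sum_{x\in\mathcal{D}_{\pi(g)}}|\pi(g)(x)|$. If the weight is positive, some $x\in X$ with $\pi(g)(x)\neq 0$ can be peeled off: the cocycle formula rearranges to $\pi(x^{\mp 1}g)=x^{\mp 1}\bullet(\pi(g)\mp\delta_x)$, which has strictly smaller weight, so by induction $x^{\mp 1}g$ is determined by its image, hence so is $g$. If the weight is zero, then $\pi(g)$ is the zero partial function on some domain $\mathcal{D}\subseteq X$, and I would argue that $g$ must then be the idempotent of $\operatorname{IM}(X,r)$ corresponding to $\mathcal{D}$, uniquely determined via the homomorphism $\alpha$ of Lemma \ref{lem-existence-epsilon}(i). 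Combining the two cases shows that distinct elements of $\operatorname{IM}(X,r)$ produce distinct images in $A$.

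The main obstacle I expect is the delicate bookkeeping of partial domains. Unlike the classical setting, where $\pi$ targets the full group $\mathbb{Z}^X$, here elements of $A$ have varying finite-support domains and the addition rule intersects them. The key technical point is to verify that the domain of $\pi(g)$ accurately tracks $\mathcal{R}_{\sigma_g}$, so that the peeling-off step in the induction is well-defined, and that two elements of $\operatorname{IM}(X,r)$ which might share pointwise values but differ on an idempotent part are nevertheless separated by the domain data recorded in $\pi$. The square-free hypothesis, ensuring $x\in\mathcal{R}_{\sigma_x}\cap\mathcal{D}_{\sigma_x}$ for every generator, is precisely what makes this coherent domain tracking work throughout the recursion.
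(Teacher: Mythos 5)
Your first half (well-definedness) is essentially the paper's own argument: reducing the Yang--Baxter relations to the pointwise identity $\delta_x+\delta_y\circ\sigma_x^{-1}=\delta_{\sigma_x(y)}+\delta_{\gamma_y(x)}\circ\sigma_{\sigma_x(y)}^{-1}$ (checked via $\sigma_x\sigma_y=\sigma_{\sigma_x(y)}\sigma_{\gamma_y(x)}$, the involutive identity and square-freeness) and verifying the Wagner relations $gg^*g=g$ and commuting idempotents is exactly Lemmas \ref{lem-pi-well-def1}, \ref{lem-pi-well-def2}, \ref{lem-pi-well-def3}. Your injectivity argument, however, departs from the paper --- which constructs an explicit map $\omega:A\rightarrow\operatorname{IM}(X,r)$ and proves $\omega\circ\pi=\operatorname{Id}$ by induction on the length of a word representing $g$ --- and as it stands it has two genuine gaps, both at precisely the points where the inverse-monoid setting differs from the group setting of Proposition \ref{prop-etingof}.

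First, the peeling step does not close. From $\pi(g)=\pi(h)$ you deduce $\pi(x^{\mp1}g)=\pi(x^{\mp1}h)$ of smaller weight, and by induction $x^{\mp1}g=x^{\mp1}h$ in $\operatorname{IM}(X,r)$; but an inverse monoid has no left cancellation, so this only yields $xx^*g=xx^*h$. To conclude $g=h$ you would need $xx^*g=g$ and $xx^*h=h$, i.e.\ $gg^*\leq xx^*$, and nothing in your argument establishes this: the fact that $x\in\mathcal{D}_{\pi(g)}=\mathcal{R}_{\sigma_g}$ only controls the image of $g$ under $\alpha$, and $\alpha$ is not injective, so such inequalities do not lift from $\operatorname{I}_X$ to $\operatorname{IM}(X,r)$. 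Second, the weight-zero base case is unjustified: an element with $\pi(g)=0_{\mathcal{D}}$ is not shown to be an idempotent, and even for idempotents there is no single ``idempotent of $\operatorname{IM}(X,r)$ corresponding to $\mathcal{D}$'' --- for instance $x_ix_i^*$ and $x_ix_i^*x_jx_j^*$ with $\mathcal{R}_{\sigma_i}\subseteq\mathcal{R}_{\sigma_j}$ are in general distinct elements of $\operatorname{IM}(X,r)$, yet both have $\pi$-image $0_{\mathcal{R}_{\sigma_i}}$ and the same $\alpha$-image $\operatorname{Id}_{\mathcal{R}_{\sigma_i}}$, so $\alpha$ cannot ``uniquely determine'' $g$ from this data. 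This base case is where the real content of the statement sits, and your scheme of reconstructing $g$ from the bare element $\pi(g)\in A$ runs straight into it; the paper avoids both issues by never attempting such a reconstruction: it fixes a word $x_{j_1}^{\epsilon_1}\cdots x_{j_k}^{\epsilon_k}$ for $g$, inducts on $k$, and applies $\omega$ to the specific decomposition $\pi(g')+g'\bullet\delta$ arising from that word, producing a left inverse of $\pi$ along words rather than cancelling in the monoid. To salvage your route you would have to prove the missing order relation $gg^*\leq xx^*$ whenever $x\in\mathcal{D}_{\pi(g)}$ and separately classify the $\pi$-fibre over the zero functions, neither of which is routine.
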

For convenience, we divide the proof   of Theorem \ref{theo-pi-injective} into several lemmas. First, we show that $\pi$ is well-defined.
\begin{lem}\label{lem-pi-well-def1}
	Let $g,h \in \operatorname{IM}(X,r)$. Then
	\begin{enumerate}[(i)]
		\item  $\pi(gg^*g)=\pi(g)$  in $A$.
		\item  $\pi(gg^*hh^*)=\pi(hh^*gg^*)$ in $A$.
	\end{enumerate}
\end{lem}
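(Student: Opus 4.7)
The plan is to verify the two identities that witness well-definedness of $\pi$ on the free inverse monoid, namely the relations $u = uu^*u$ and $uu^*vv^* = vv^*uu^*$ generating the congruence $\rho'$, by direct computation using the recursive rules that define $\pi$ together with the fact (Lemma \ref{lem-existence-epsilon}) that $\operatorname{IM}(X,r)$ acts on $A$ by endomorphisms via $g\bullet f = f\circ\sigma_g^{-1}$.

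The crucial preliminary observation that I would establish first is: \emph{for every $g \in \operatorname{IM}(X,r)$, one has $\mathcal{D}_{\pi(g)} \subseteq \mathcal{R}_{\sigma_g}$.} I would prove this by induction on the length of $g$ as a word in $X\cup X^*$. The base cases are immediate from $\mathcal{D}_{\delta_x} = \mathcal{R}_{\sigma_x}$ and $\mathcal{D}_{-\delta_x\circ\sigma_x} = \mathcal{D}_{\sigma_x} = \mathcal{R}_{\sigma_x^{-1}} = \mathcal{R}_{\sigma_{x^*}}$. For the inductive step on $g=uv$, the cocycle formula $\pi(uv) = \pi(u) + \pi(v)\circ\sigma_u^{-1}$ together with the elementary identity $\mathcal{R}_{\sigma_{uv}} = \{x \in \mathcal{R}_{\sigma_u} : \sigma_u^{-1}(x)\in\mathcal{R}_{\sigma_v}\}$ gives $\mathcal{D}_{\pi(uv)} \subseteq \mathcal{R}_{\sigma_u}\cap\mathcal{R}_{\sigma_{uv}}\subseteq \mathcal{R}_{\sigma_{uv}}$.

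For part (i), I would unfold $\pi(gg^*g) = \pi(g) + g\bullet\pi(g^*g)$ and further $\pi(g^*g) = -\pi(g)\circ\sigma_g + \pi(g)\circ\sigma_g$ (using $\sigma_{g^*}=\sigma_g^{-1}$). By the preliminary claim, both summands share domain $\sigma_g^{-1}(\mathcal{D}_{\pi(g)})$, so their sum is the zero function on this set. Applying $g\bullet(-)$, i.e.\ post-composing with $\sigma_g^{-1}$, produces the zero function on $\mathcal{D}_{\pi(g)}$ itself, and adding $\pi(g)$ recovers $\pi(g)$ since its domain is already contained in $\mathcal{D}_{\pi(g)}$. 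For part (ii), I would first compute $\pi(gg^*) = \pi(g) + g\bullet(-\pi(g)\circ\sigma_g) = \pi(g) - \pi(g)|_{\mathcal{R}_{\sigma_g}}$, which collapses to the zero function on $\mathcal{D}_{\pi(g)}$ by the preliminary claim; similarly $\pi(hh^*)$ is the zero function on $\mathcal{D}_{\pi(h)}$. Since $\sigma_{gg^*} = \operatorname{Id}_{\mathcal{R}_{\sigma_g}}$, the action $(gg^*)\bullet(-)$ is just restriction to $\mathcal{R}_{\sigma_g}$, so $\pi(gg^*hh^*)$ is the zero function on $\mathcal{D}_{\pi(g)}\cap\mathcal{R}_{\sigma_g}\cap\mathcal{D}_{\pi(h)} = \mathcal{D}_{\pi(g)}\cap\mathcal{D}_{\pi(h)}$. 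The symmetric calculation with $g$ and $h$ exchanged yields the same domain and hence the same zero function.

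The main obstacle I anticipate is the meticulous tracking of domains of partial functions under the pointwise addition in $A$ (which intersects domains) and under the endomorphism action (which pre-composes with an inverse partial permutation). The preliminary inclusion $\mathcal{D}_{\pi(g)}\subseteq \mathcal{R}_{\sigma_g}$ is precisely the structural fact that makes every intersection of domains collapse to the desired set; without it the identities in (i) and (ii) would hold only after restricting further and $\pi$ would fail to descend to $\operatorname{FIM}(X)$.
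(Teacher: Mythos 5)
Your proof is correct, and it follows the same basic computational strategy as the paper (unfold the cocycle rule $\pi(gh)=\pi(g)+g\bullet\pi(h)$, use $\sigma_{g^*}=\sigma_g^{-1}$ and the action $g\bullet f=f\circ\sigma_g^{-1}$, and identify the resulting zero functions), but it is organized differently in a way that is worth noting. The paper verifies the identities only for generators, computing $\pi(x_ix_i^*x_i)=\delta_i$ and $\pi(x_i^*x_ix_j^*x_j)=\pi(x_j^*x_jx_i^*x_i)=0_{\mathcal{D}_{\sigma_i}\cap\mathcal{D}_{\sigma_j}}$ explicitly, and then asserts that the case of arbitrary $g,h$ goes ``in the same way''; the domain bookkeeping needed for that extension is left implicit there and only surfaces later, in the proof of Theorem \ref{theo_psi-hom}, where $\mathcal{D}_{\pi(g)}=\mathcal{R}_{\sigma_g}$ is established by induction on word length. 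You instead isolate up front the inclusion $\mathcal{D}_{\pi(g)}\subseteq\mathcal{R}_{\sigma_g}$ (a weaker statement than the equality proved later, but exactly what is needed here) and then carry out the computation for arbitrary $g,h$ directly, obtaining $\pi(g^*g)=0_{\sigma_g^{-1}(\mathcal{D}_{\pi(g)})}$, $\pi(gg^*)=0_{\mathcal{D}_{\pi(g)}}$, and hence $\pi(gg^*hh^*)=0_{\mathcal{D}_{\pi(g)}\cap\mathcal{D}_{\pi(h)}}=\pi(hh^*gg^*)$. This buys a genuinely complete argument for the lemma as stated (the relations $u=uu^*u$ and $uu^*vv^*=vv^*uu^*$ must be checked for all words $u,v$, not just letters), at the cost of the extra induction; the paper's version buys brevity by deferring that induction to Theorem \ref{theo_psi-hom}. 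One small caveat: your cocycle formula $\pi(uv)=\pi(u)+\pi(v)\circ\sigma_u^{-1}$ for an arbitrary factorization should strictly be justified from the recursive (letter-by-letter) definition of $\pi$ on words, using that the action is by endomorphisms and associative; this is routine and is also taken for granted in the paper, so it is not a gap relative to the paper's level of rigor.
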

\begin{proof}
$(i),(ii)$ Let $x_i \in X$. We show that $\pi(x_ix_i^*x_i)=\pi(x_i)=\delta_i$. From the definition of $\pi$, 
$\pi(x_i^*x_i)=\pi(x_i^*)+x_i^* \bullet \pi(x_i)=-\delta_i\circ \sigma_i +\delta_i\circ \sigma_i =0_{\mathcal{D}_{\sigma_i}}$, and $\pi(x_ix_i^*x_i)=\pi(x_i)+x_i \bullet \pi(x_i^*x_i)=\delta_i +0_{\mathcal{D}_{\sigma_i}} \circ\sigma_i ^{-1}=\delta_i +0_{\mathcal{R}_{\sigma_i}} =\delta_i$. In the same way, we  show $\pi(x_i^*x_ix_j^*x_j)=\pi(x_j^*x_jx_i^*x_i)=0_{\mathcal{D}_{\sigma_i}\cap \mathcal{D}_{\sigma_j}} $, and for any $g,h \in \operatorname{IM}(X,r)$, $\pi(gg^*g)=\pi(g)$, and $\pi(gg^*hh^*)=\pi(hh^*gg^*)$.
\end{proof}
	Let $(x_i,x_j)\in \mathcal{D}$,  $i\neq j$. So,  $x_ix_j=x_{\sigma_i(j)}x_{\gamma_j(i)}$ is a defining relation in  $\operatorname{IM}(X,r)$.  We  show  that  $\pi(x_ix_j)=\pi(x_{\sigma_i(j)}x_{\gamma_j(i)})$ in $A$.  From the definition of $\pi$:\\
	\begin{center}$	\left\{
	\begin{array}{lll}
	\pi(x_ix_j)&=&\delta_i+\delta_j \circ \sigma_i^{-1}\\
	\pi(x_{\sigma_i(j)}x_{\gamma_j(i)})&=&\delta_{\sigma_i(j)}+\delta_{\gamma_j(i)}\circ \sigma^{-1}_{\sigma_i(j)}\\
	\end{array}
	\right.$\end{center}

\begin{lem}\label{lem-pi-well-def2}
	Let denote   $f'=\delta_{\gamma_j(i)}\circ \sigma^{-1}_{\sigma_i(j)}$ and   $f=\delta_{\sigma_i(j)}+f'$. Then $\mathcal{D}_{\delta_i+\delta_j \circ \sigma_i^{-1}}= \mathcal{D}_{f}$.
\end{lem}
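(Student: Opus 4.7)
The plan is to unpack both sides as explicit subsets of $X$, rewrite each as the range of a composition in the symmetric inverse monoid $\operatorname{I}_X$, and then invoke the defining relation $x_i x_j = x_{\sigma_i(j)} x_{\gamma_j(i)}$ via the homomorphism $\alpha$ of Lemma \ref{lem-existence-epsilon}.

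First I would compute $\mathcal{D}_{\delta_i + \delta_j \circ \sigma_i^{-1}}$. Recall that a sum in the commutative inverse monoid $A$ has domain equal to the intersection of the summands' domains, and that for a composition $h = g \circ \tau$ of partial functions one has $\mathcal{D}_{h} = \tau^{-1}(\mathcal{D}_g \cap \mathcal{R}_\tau)$. Since $\mathcal{D}_{\delta_j} = \mathcal{R}_{\sigma_j}$, the domain of $\delta_j \circ \sigma_i^{-1}$ equals
\[
\{\,x \in \mathcal{R}_{\sigma_i} : \sigma_i^{-1}(x) \in \mathcal{R}_{\sigma_j}\,\} \;=\; \sigma_i(\mathcal{D}_{\sigma_i} \cap \mathcal{R}_{\sigma_j}),
\]
which is already contained in $\mathcal{D}_{\delta_i} = \mathcal{R}_{\sigma_i}$. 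The intersection therefore collapses, giving
\[
\mathcal{D}_{\delta_i + \delta_j \circ \sigma_i^{-1}} \;=\; \sigma_i(\mathcal{D}_{\sigma_i} \cap \mathcal{R}_{\sigma_j}) \;=\; \mathcal{R}_{\sigma_i \sigma_j},
\]
where the last equality uses the formula $\mathcal{R}_{f \circ g} = f(\mathcal{D}_f \cap \mathcal{R}_g)$ recalled in Section $2$ for composition in $\operatorname{I}_X$. Running the identical computation on $f = \delta_{\sigma_i(j)} + \delta_{\gamma_j(i)} \circ \sigma^{-1}_{\sigma_i(j)}$ yields
\[
\mathcal{D}_f \;=\; \sigma_{\sigma_i(j)}\bigl(\mathcal{D}_{\sigma_{\sigma_i(j)}} \cap \mathcal{R}_{\sigma_{\gamma_j(i)}}\bigr) \;=\; \mathcal{R}_{\sigma_{\sigma_i(j)}\sigma_{\gamma_j(i)}}.
\]

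Finally, because $(x_i, x_j) \in \mathcal{D}$, the identity $x_i x_j = x_{\sigma_i(j)} x_{\gamma_j(i)}$ is a defining relation of $\operatorname{IM}(X,r)$; applying the monoid homomorphism $\alpha : \operatorname{IM}(X,r) \to \operatorname{I}_X$ from Lemma \ref{lem-existence-epsilon}, which encodes Equation \eqref{eqn-alpha-hom} of Lemma \ref{lem-formules-invol+braided-partial}, gives the equality $\sigma_i \sigma_j = \sigma_{\sigma_i(j)} \sigma_{\gamma_j(i)}$ of partial bijections in $\operatorname{I}_X$. Equality in $\operatorname{I}_X$ means equality of domains, ranges and values; in particular the two ranges computed above coincide, which is exactly the claim.

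The only real obstacle is the bookkeeping of domains of partial functions under summation and precomposition with $\sigma_i^{-1}$: one must verify that the apparent extra constraint $x \in \mathcal{R}_{\sigma_i}$ coming from $\delta_i$ is automatic, so that the sum's domain really is the single set $\sigma_i(\mathcal{D}_{\sigma_i} \cap \mathcal{R}_{\sigma_j})$ and not something strictly smaller. Once this identification is made and both domains are rewritten as ranges of compositions in $\operatorname{I}_X$, the braided relation finishes the proof immediately.
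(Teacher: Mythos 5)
Your proposal is correct and follows essentially the same route as the paper's proof: both compute $\mathcal{D}_{\delta_i+\delta_j\circ\sigma_i^{-1}}=\mathcal{R}_{\sigma_i\sigma_j}$ and $\mathcal{D}_f=\mathcal{R}_{\sigma_{\sigma_i(j)}\sigma_{\gamma_j(i)}}$ using the domain/range formulas for composition in $\operatorname{I}_X$, and then conclude from $\sigma_i\sigma_j=\sigma_{\sigma_i(j)}\sigma_{\gamma_j(i)}$. The only cosmetic difference is that you invoke this identity through the homomorphism $\alpha$ of Lemma \ref{lem-existence-epsilon}, while the paper cites Equation \eqref{eqn-alpha-hom} of Lemma \ref{lem-formules-invol+braided-partial} directly; these are the same fact.
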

\begin{proof}
We compute both $\mathcal{D}_{\delta_i+\delta_j \circ \sigma_i^{-1}}$ and $\mathcal{D}_{f}$:\\

 $\left\{
\begin{array}{lll}
	\mathcal{D}_{\delta_i+\delta_j \circ \sigma_i^{-1}}=\mathcal{D}_{\delta_i}\cap \mathcal{D}_{\delta_j \circ \sigma_i^{-1}}= \mathcal{R}_{\sigma_i}\cap \sigma_i(\mathcal{D}_{\sigma_i} \cap\mathcal{D}_{\delta_j})=\mathcal{R}_{\sigma_i}\cap \sigma_i(\mathcal{D}_{\delta_j})=\mathcal{R}_{\sigma_i}\cap \sigma_i(\mathcal{R}_{\sigma_j})=\mathcal{R}_{\sigma_i\sigma_j}\\
	\mathcal{D}_{f}= \mathcal{D}_{\delta_{\sigma_i(j)}}\cap \mathcal{D}_{f'}=\mathcal{D}_{\delta_{\sigma_i(j)}}\cap\sigma_{\sigma_i(j)}(\mathcal{D}_{\sigma_{\sigma_i(j)}} \cap\mathcal{D}_{\delta_{\gamma_j(i)}})
	=\mathcal{R}_{\sigma_{\sigma_i(j)}}\cap \sigma_{\sigma_i(j)}(\mathcal{R}_{\sigma_{\gamma_j(i)}})
	= \mathcal{R}_{\sigma_{\sigma_i(j)}\sigma_{\gamma_j(i)}}
	\end{array}
\right.$
From Lemma \ref{lem-formules-invol+braided-partial}$(iii)$,  $\sigma_i\sigma_j=\sigma_{\sigma_i(j)}\sigma_{\gamma_j(i)}$, so $\mathcal{D}_{\delta_i+\delta_j \circ \sigma_i^{-1}}=\mathcal{D}_{f}$.
\end{proof}
	
	\begin{lem}\label{lem-pi-well-def3}
The partial maps $\delta_i+\delta_j \circ \sigma_i^{-1}$ and $f$ are equal. 
	\end{lem}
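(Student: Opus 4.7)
The plan is to verify pointwise equality, since by Lemma~\ref{lem-pi-well-def2} the two partial maps share the common domain $\mathcal{R}_{\sigma_i\sigma_j}$. So the task reduces to checking that for every $y$ in this domain,
\[
\delta_i(y)+\delta_j(\sigma_i^{-1}(y))\;=\;\delta_{\sigma_i(j)}(y)+\delta_{\gamma_j(i)}(\sigma_{\sigma_i(j)}^{-1}(y)).
\]
Each of the four summands is a $\{0,1\}$-valued indicator, so I would proceed by identifying precisely when each one equals $1$ and matching them up.

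Write $k:=\sigma_i(j)$ and $l:=\gamma_j(i)$, so that the involutive identity from Lemma~\ref{lem-formules-invol+braided-partial}$(i)$, namely $\sigma_{\sigma_i(j)}(\gamma_j(i))=i$ and $\gamma_{\gamma_j(i)}(\sigma_i(j))=j$, becomes $\sigma_k(l)=i$ and $\gamma_l(k)=j$. Using these identifications, I would read off:
\begin{itemize}
\item $\delta_i(y)=1$ iff $y=x_i$;
\item $\delta_j(\sigma_i^{-1}(y))=1$ iff $\sigma_i^{-1}(y)=x_j$, i.e.\ $y=\sigma_i(x_j)=x_k$;
\item $\delta_{\sigma_i(j)}(y)=\delta_k(y)=1$ iff $y=x_k$;
\item $\delta_{\gamma_j(i)}(\sigma_{\sigma_i(j)}^{-1}(y))=\delta_l(\sigma_k^{-1}(y))=1$ iff $\sigma_k^{-1}(y)=x_l$, i.e.\ $y=\sigma_k(x_l)=x_i$ by involutivity.
\end{itemize}
Thus both sides evaluate to $[y=x_i]+[y=x_k]$, which finishes the pointwise verification.

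There is essentially no real obstacle; the main point is that once the domains agree, the two expressions are just two different ways of writing the same pair of indicator functions, interchanged by the involutive identity. The only small subtlety is the square-free diagonal case $i=j$: there $r(x_i,x_i)=(x_i,x_i)$ forces $k=l=i$, so the two indicators collapse and both sides equal $2\,[y=x_i]$, which is consistent. The domain condition $y\in\mathcal{R}_{\sigma_i\sigma_j}$ already guarantees that $\sigma_i^{-1}(y)$ and $\sigma_{\sigma_i(j)}^{-1}(y)$ are defined whenever needed, so no extra care with partiality is required beyond what Lemma~\ref{lem-pi-well-def2} provides.
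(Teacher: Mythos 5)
Your proof is correct, and at bottom it is the same pointwise verification the paper performs: both arguments fix the common domain via Lemma \ref{lem-pi-well-def2} and then compare values using the involutive identity $\sigma_{\sigma_i(j)}(\gamma_j(i))=i$ from Lemma \ref{lem-formules-invol+braided-partial}. The difference is organizational, but worth noting: the paper evaluates both sides case by case at the two special points $i$ and $\sigma_i(j)$ (all other points of the common domain giving $0$ on both sides), and in doing so it leans on square-freeness explicitly ($\sigma_i^{-1}(i)=i$, $\sigma_{\sigma_i(j)}^{-1}(\sigma_i(j))=\sigma_i(j)$, together with $i\neq j$, $\sigma_i(j)\neq i$, $\gamma_j(i)\neq\sigma_i(j)$). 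You instead rewrite each of the four summands as an indicator on the common domain, so that both sides become $[y=x_i]+[y=x_{\sigma_i(j)}]$, with involutivity invoked exactly once to identify $\sigma_{\sigma_i(j)}(\gamma_j(i))$ with $i$; this removes the case analysis, is insensitive to possible coincidences among the points involved, and uses square-freeness only through the standing convention $x\in\mathcal{R}_{\sigma_x}$, which is what makes each $\delta_x$ (and the value $\delta_x(x)=1$) well defined. Your claim that no extra care with partiality is needed is justified: the domain computed in Lemma \ref{lem-pi-well-def2} is contained in $\mathcal{R}_{\sigma_i}$ and in $\mathcal{R}_{\sigma_{\sigma_i(j)}}$, and its preimages under $\sigma_i^{-1}$ and $\sigma_{\sigma_i(j)}^{-1}$ land in $\mathcal{D}_{\delta_j}$ and $\mathcal{D}_{\delta_{\gamma_j(i)}}$ respectively, so every summand is defined where you evaluate it. One small caveat on your closing remark: the paper sets up Lemmas \ref{lem-pi-well-def2} and \ref{lem-pi-well-def3} only for $(x_i,x_j)\in\mathcal{D}$ with $i\neq j$ (the diagonal relations $x_ix_i=x_ix_i$ being trivial), so your treatment of the case $i=j$ is harmless but not needed.
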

	\begin{proof}
 For all the values in  $\mathcal{D}_{f}$, except maybe for $i$, $\sigma_i(j)$ and  $\sigma_{\sigma_i(j)}\gamma_j(i)$ (assuming they belong to $\mathcal{D}_{f}$), both functions are zero. As, from Equation \ref{eq-inv1},  $\sigma_{\sigma_i(j)}\gamma_j(i)=i$,  we need to check the value of the partial maps $\delta_i+\delta_j \circ \sigma_i^{-1}$ and $f$  at two  values:  $i$  and $\sigma_i(j)$.\\
   For $i \in 	\mathcal{D}_{\delta_i+\delta_j \circ \sigma_i^{-1}}$.   As the partial solution is square-free,   $\sigma_i^{-1}(i)=i $, $\sigma_i(j)\neq i$,  $i\neq j$:\\
 $\left\{
 \begin{array}{lll}
  (\delta_i+\delta_j \circ \sigma_i^{-1})(i)=\delta_i(i)+\delta_j \circ \sigma_i^{-1}(i)=1+\delta_j (i)=1+0=1\\
 f(i)=(\delta_{\sigma_i(j)}+\delta_{\gamma_j(i)}\circ \sigma^{-1}_{\sigma_i(j)})(i)=\delta_{\sigma_i(j)}(i)+\delta_{\gamma_j(i)}\circ \sigma^{-1}_{\sigma_i(j)}(i)=0+\delta_{\gamma_j(i)}(\sigma^{-1}_{\sigma_i(j)}(i))=1
 \end{array}
 \right.$
 
	The last step comes from Lemma \ref{lem-formules-invol+braided-partial}$(ii)$. Indeed, 
$\sigma^{-1}_{\sigma_i(j)}(i)=\gamma_j(i)$, so  $f(i)= \delta_{\gamma_j(i)}(\gamma_j(i))=1$, that is   $(\delta_i+\delta_j \circ \sigma_i^{-1})(i)=f(i)=1$. \\
For $\sigma_i(j) \in 	\mathcal{D}_{\delta_i+\delta_j \circ \sigma_i^{-1}}$. As the partial solution is square-free,   $\sigma^{-1}_{\sigma_i(j)}(\sigma_i(j))=\sigma_i(j)$,  for $i\neq j$,  $\sigma_i(j)\neq i$, $\gamma_j(i)\neq \sigma_i(j)$, and:\\
 $\left\{
\begin{array}{lll}(\delta_i+\delta_j \circ \sigma_i^{-1})(\sigma_i(j))=
\delta_i(\sigma_i(j))+\delta_j \circ \sigma_i^{-1}(\sigma_i(j) )=0+\delta_j (j)=0+1=1\\
f(\sigma_i(j))=(\delta_{\sigma_i(j)}+\delta_{\gamma_j(i)}\circ \sigma^{-1}_{\sigma_i(j)})(\sigma_i(j))=
\delta_{\sigma_i(j)}(\sigma_i(j))+\delta_{\gamma_j(i)}(\sigma_i(j))=1+0=1\\
\end{array}
\right.$

So,   $f(\sigma_i(j) )=(\delta_i+\delta_j \circ \sigma_i^{-1})(\sigma_i(j))=1$, and  the partial maps $\delta_i+\delta_j \circ \sigma_i^{-1}$ and $f$ are equal, that is  $\pi(x_ix_j)=\pi(x_{\sigma_i(j)}x_{\gamma_j(i)})$ in $A$. 
\end{proof}
\begin{lem}\label{lem-pi-inj}
	 $\pi$ is an injective map. 
\end{lem}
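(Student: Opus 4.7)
The plan is to show injectivity by extracting two pieces of data from $\pi(g)$: a domain-tracking argument that recovers the idempotent structure of $g$ from $\mathcal{D}_{\pi(g)}$, and the classical bijective $1$-cocycle of Proposition \ref{prop-etingof} adapted to the partial setting to recover the signed-word content.

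First, I would establish by induction on word length that $\mathcal{D}_{\pi(g)} = \mathcal{R}_{\sigma_g}$ for every $g \in \operatorname{IM}(X,r)$. The base case holds by the very definition $\pi(x_i) = \delta_i$, since $\mathcal{D}_{\delta_i} = \mathcal{R}_{\sigma_i}$. For a product $gh$, the same domain computation as in the proof of Lemma \ref{lem-pi-well-def2} (now applied inductively rather than only at length two) yields $\mathcal{D}_{\pi(gh)} = \mathcal{R}_{\sigma_g} \cap \sigma_g(\mathcal{D}_{\sigma_g}\cap \mathcal{R}_{\sigma_h}) = \mathcal{R}_{\sigma_g \sigma_h}$, which equals $\mathcal{R}_{\sigma_{gh}}$ by the homomorphism $\alpha$ of Lemma \ref{lem-existence-epsilon}(i); the case of $g^*$ is symmetric, and idempotents are handled by Lemma \ref{lem-pi-well-def1}. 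Consequently $\pi(g)$ determines $\mathcal{R}_{\sigma_g}$, and applying the same principle to $\pi(g^*)$ also determines $\mathcal{D}_{\sigma_g}$.

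Second, pairing $\pi$ with $\alpha$ yields a map $\Psi(g) = (\pi(g), \alpha(g))$ into the restricted product $A \Join \operatorname{I}_X$; the compatibility condition of Definition \ref{defn-restricted} is precisely the equality $\mathcal{D}_{\pi(g)} = \mathcal{R}_{\sigma_g}$ just proved, and the $1$-cocycle identity for $\pi$ matches the multiplication rule in $A \Join \operatorname{I}_X$, so $\Psi$ is a homomorphism of inverse monoids. The key point is that $\alpha(g)$ is already recoverable from $\pi(g)$ alone: its range is $\mathcal{D}_{\pi(g)}$, and its action on this range can be reconstructed from the numerical values of $\pi(g)$ by unfolding the cocycle down to the generators, in the spirit of the bijective $1$-cocycle of Proposition \ref{prop-etingof}(iv). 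Hence $\pi(g) = \pi(h)$ forces $\Psi(g) = \Psi(h)$, and injectivity of $\pi$ reduces to injectivity of $\Psi$.

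The main obstacle — indeed the substantive content of Theorem $1$ — is injectivity of $\Psi$. My plan is a normal-form argument: each element of $\operatorname{IM}(X,r)$ reduces, using the defining relations and the standard inverse-monoid identities, to a canonical form $e\cdot u \cdot v^*$, where $e$ is a product of idempotents $x_{i_k} x_{i_k}^*$ and $u,v$ are positive words in a Garside-like normal form inherited from the classical companion Etingof-style monoid. Square-freeness of $(X,r)$ is crucial here: it guarantees confluence of the reduction and rules out unexpected collapses between idempotents and non-idempotent elements. On such normal forms, $\alpha(g)$ detects $uv^{-1}$ up to the defining relations (by the classical Etingof argument), while the domain $\mathcal{D}_{\pi(g)}$ detects the idempotent factor $e$. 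Hence $\Psi$ is injective on normal forms, and therefore on $\operatorname{IM}(X,r)$, which completes the proof.
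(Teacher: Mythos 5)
There is a genuine gap, and it sits at the hinge of your argument. You reduce injectivity of $\pi$ to injectivity of $\Psi(g)=(\pi(g),\alpha(g))$, which requires that $\sigma_g=\alpha(g)$ be recoverable from the partial function $\pi(g)$ alone. Your justification — that the action of $\sigma_g$ on $\mathcal{D}_{\pi(g)}$ can be ``reconstructed by unfolding the cocycle down to the generators, in the spirit of Proposition \ref{prop-etingof}(iv)'' — is circular: in the classical setting the map $\phi$ of Proposition \ref{prop-etingof}(vi), which recovers the permutation from the cocycle value, is well defined only \emph{because} the cocycle $\pi$ is already known to be bijective. A finitely supported function $X\to\mathbb{Z}$ does not remember which word produced it, so there is no independent unfolding procedure; what you need at this step is essentially the statement being proved. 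Moreover, injectivity of $\Psi$ is not available as an input: in the paper it is Theorem \ref{theo_psi-hom}, whose proof \emph{derives} injectivity of $\psi$ from injectivity of $\pi$, i.e.\ the implication goes in the opposite direction to yours. Your substitute argument for it — a confluent reduction of every element to a normal form $e\cdot u\cdot v^*$ inherited from a Garside-like companion monoid — is unsupported in the partial setting: the defining relations exist only for pairs in $\mathcal{D}$, and Example \ref{ex-revers-not-works} exhibits exactly the failure of completion (no relation $x_0\cdots=x_1\cdots$) that destroys the confluence you invoke; nor does every element of an inverse monoid admit the form $e\,u\,v^*$ with $e$ a product of idempotents $x_ix_i^*$ and $u,v$ positive (already $x^*x$ in the free inverse monoid resists this).

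By contrast, the paper's proof of Lemma \ref{lem-pi-inj} is direct and avoids both difficulties: it defines an explicit map $\omega\colon A\to\operatorname{IM}(X,r)$ on the generators $\delta_i\mapsto x_i$, $-\delta_i\circ\sigma_i\mapsto x_i^*$ (and their restrictions), extended by $\omega(f+f')=\omega(f)\,\omega\bigl((\omega(f))^*\bullet f'\bigr)$, and verifies $\omega\circ\pi=\operatorname{Id}_{\operatorname{IM}(X,r)}$ by induction on the length of a word $x_{j_1}^{\epsilon_1}\cdots x_{j_k}^{\epsilon_k}$ representing $g$, using the expansion $\pi(g)=\sum_{i}\epsilon_i\,(\delta_{j_i}\circ\sigma_{j_i}^{\mu_i})\circ\sigma_{g_{i-1}}^{-1}$; the existence of a left inverse gives injectivity at once. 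Your preliminary observation that $\mathcal{D}_{\pi(g)}=\mathcal{R}_{\sigma_g}$ is correct and is indeed proved by the induction you describe, but in the paper it appears later, inside the proof of Theorem \ref{theo_psi-hom}, and it only recovers the domain data (hence the idempotent $\sigma_g\sigma_g^{-1}$), not $\sigma_g$ itself — so it cannot close the gap above.
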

\begin{proof}
We define a map $\omega: A \rightarrow \operatorname{IM}(X,r)$ in the following way:\\
\begin{gather*}
\begin{array}{llll}
&\omega(\delta_i)=x_i\ & &\omega(\delta_i\mid_{\mathcal{D}})=x_i, \;\; \mathcal{D}\subseteq \mathcal{D}_{\delta_i}\\
&\omega(-\delta_i \circ \sigma_i)=x_i^* & &\omega((-\delta_i \circ \sigma_i)\mid_{\mathcal{D}})=x_i^*, \;\;\;\mathcal{D}\subseteq \mathcal{D}_{-\delta_i \circ \sigma_i}\\
\end{array}\\
\omega(f+f')=\omega(f)\;\;\omega(\,(\omega(f))^{*}\,\bullet f') \;\;,\;\;f,f' \in A
\end{gather*}\\
 We show that $\omega\circ \pi=Id_{ \operatorname{IM}(X,r)}$. Let $g=x_{j_1}^{\epsilon_1}x_{j_2}^{\epsilon_2}...x_{j_k} ^{\epsilon_k}\,\in \operatorname{IM}(X,r)$, where for every $1 \leq i \leq k$, $x_{j_i}\in X$,  $\epsilon_i =\pm1$ (we replace here the $*$ by $-1$).  Let $g_i=x_{j_1}^{\epsilon_1}x_{j_2}^{\epsilon_2}...x_{j_i} ^{\epsilon_i}$,  $1 \leq i \leq k$, and $g_0=1$. So,  $\sigma_{g_{i-1}}= \sigma_{j_{1}}^{\epsilon_1}\sigma_{j_{2}}^{\epsilon_2}...\sigma_{j_{i-1}}^{\epsilon_{i-1}}$ and $\sigma_{g_0}=Id_X$. Let $\mu_i=\epsilon_i-1 \,(\equiv mod3)$. We show by induction on $k$ that $\omega\circ \pi(g)=g$, where  $\pi(g)=\sum\limits_{i=1}^{i=k}\epsilon_i\,  (\delta_{j_i}\circ \sigma_{j_i}^{\mu_i})\circ \sigma_{g_{i-1}}^{-1}$. \\
	For $k=1$, it holds from the definition of $\pi$ and $\omega$. 
 Assume  $\omega\circ \pi(g')=g'$, for every $g'\in \operatorname{IM}(X,r)$ of  the form  $x_{j_1}^{\epsilon_1}x_{j_2}^{\epsilon_2}...x_{j_{k-1}} ^{\epsilon_{k-1}}$,  $\epsilon_i =\pm1$, and $g=g'x_{j_k}^{\epsilon_k}$. Let $\delta=\pi(x_{j_k}^{\epsilon_k})$. So, $\omega\circ \pi(g)=\omega(\pi(g')+g'\bullet \delta)=\omega(\pi(g'))\,\omega((\omega(\pi(g')))^* \, \bullet g' \bullet \delta)$. From the induction assumption, we have $\omega\circ \pi(g)=g'\,\omega(g'^*\bullet g'\bullet \delta)=g'\,\omega(\delta\circ \sigma_{g'}^{-1}\circ \sigma_{g'})=g'\,\omega(\delta\circ \operatorname{Id}_{\mathcal{D}_{\sigma_{g'}}})=g'x_{j_k}^{\epsilon_k}=g$, since $\delta\circ \operatorname{Id}_{\mathcal{D}_{\sigma_{g'}}}$ is equal $\delta\mid_{\mathcal{D}}$, with $D=\mathcal{D}_{\sigma_{g'}}\cap \mathcal{D}_{\delta}$.
\end{proof}

Note that the map $\pi$ is injective, but certainly not surjective. Indeed, the function $\omega$, as defined, is not injective. Finally, we prove Theorem \ref{theo-pi-injective}.
\begin{proof}[Proof of Theorem \ref{theo-pi-injective}]
From Lemmas \ref{lem-pi-well-def1}, \ref{lem-pi-well-def2}, \ref{lem-pi-well-def3},  $\pi$ is well-defined and from Lemma \ref{lem-pi-inj},  $\pi$ is injective.
\end{proof} 
We show that $A  \Join \operatorname{I}_X$,  the restricted product of $A$ and  $\operatorname{I}_X$,  can be defined, and is an inverse monoid. Furthermore, we prove that, if $(X,r) $ is square-free, $\operatorname{IM}(X,r)$  embeds in the inverse monoid  $A  \Join \operatorname{I}_X$.
\begin{lem}\label{lem-epsilon}
Let $E(I_X)$ denote the set of idempotents of $I_X$. Let $\epsilon$ be defined by:
\begin{gather*}
\epsilon: A \rightarrow E(I_X)\\
\epsilon(f)=\;\operatorname{Id}_{\mathcal{D}_f}
\end{gather*}
 Then $\epsilon$ is  a surjective homomorphism of monoids  and it satisfies the following condition: for each $f\in A$, there exists $\epsilon(f) \in E(I_X)$ such that  $\epsilon(f)\leq \operatorname{Id}_{\chi} \Longleftrightarrow \operatorname{Id}_{\chi}\bullet f=f $,  for all  $\chi \subseteq X$.
\end{lem}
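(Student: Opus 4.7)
The proof is essentially an unwinding of the definitions, so the plan is to verify each of the three claims in turn, being careful about domains of partial functions.

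\emph{Well-definedness and homomorphism property.} First I would note that $\epsilon$ takes values in $E(\operatorname{I}_X)$ because the idempotents of $\operatorname{I}_X$ are precisely the partial identities $\operatorname{Id}_Y$ with $Y\subseteq X$, as recalled in Section~2. To see that $\epsilon$ is a monoid homomorphism, compute for $f,f'\in A$:
\[
\epsilon(f+f')=\operatorname{Id}_{\mathcal{D}_{f+f'}}=\operatorname{Id}_{\mathcal{D}_f\cap\mathcal{D}_{f'}},
\]
since by definition the domain of $f+f'$ is $\mathcal{D}_f\cap\mathcal{D}_{f'}$. On the other side, using the composition rule in $\operatorname{I}_X$,
\[
\operatorname{Id}_{\mathcal{D}_f}\circ\operatorname{Id}_{\mathcal{D}_{f'}}=\operatorname{Id}_{\mathcal{D}_f\cap\mathcal{D}_{f'}},
\]
so $\epsilon(f+f')=\epsilon(f)\epsilon(f')$. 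Moreover $\epsilon(0_X)=\operatorname{Id}_X$, the identity of $\operatorname{I}_X$.

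\emph{Surjectivity.} Every idempotent of $\operatorname{I}_X$ is of the form $\operatorname{Id}_Y$ for some $Y\subseteq X$. Taking $f=0_Y$, the partial function with domain $Y$ and constant value $0$, one has $0_Y\in A$ and $\epsilon(0_Y)=\operatorname{Id}_Y$, so $\epsilon$ hits every element of $E(\operatorname{I}_X)$.

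\emph{The order-theoretic condition.} For $\chi\subseteq X$, the natural partial order on idempotents gives
\[
\epsilon(f)\leq\operatorname{Id}_\chi\;\Longleftrightarrow\;\operatorname{Id}_{\mathcal{D}_f}\circ\operatorname{Id}_\chi=\operatorname{Id}_{\mathcal{D}_f}\;\Longleftrightarrow\;\operatorname{Id}_{\mathcal{D}_f\cap\chi}=\operatorname{Id}_{\mathcal{D}_f}\;\Longleftrightarrow\;\mathcal{D}_f\subseteq\chi.
\]
On the other hand, using the action from Lemma~\ref{lem-IX-acts-A} together with the fact that $\operatorname{Id}_\chi^{-1}=\operatorname{Id}_\chi$, the partial function $\operatorname{Id}_\chi\bullet f=f\circ\operatorname{Id}_\chi$ has domain $\chi\cap\mathcal{D}_f$ and agrees with $f$ pointwise on that set. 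Hence $\operatorname{Id}_\chi\bullet f=f$ as partial functions if and only if $\chi\cap\mathcal{D}_f=\mathcal{D}_f$, i.e.\ $\mathcal{D}_f\subseteq\chi$. Both conditions reduce to the same containment, which proves the equivalence.

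The only subtle point is the usual care required when comparing partial maps: \emph{equal as partial functions} means \emph{equal domains and equal values}, which is why the equivalence really does turn on the domain inclusion $\mathcal{D}_f\subseteq\chi$ and not just on pointwise agreement where both sides are defined. Once this is kept in mind, the three claims each follow in one or two lines from the definitions of $+$ in $A$, of composition in $\operatorname{I}_X$, and of the action $\tau\bullet f=f\circ\tau^{-1}$.
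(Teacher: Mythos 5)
Your proof is correct and follows essentially the same route as the paper's: the same domain computation $\mathcal{D}_{f+f'}=\mathcal{D}_f\cap\mathcal{D}_{f'}$ for the homomorphism property, surjectivity via exhibiting for each $\chi\subseteq X$ some $f\in A$ with $\mathcal{D}_f=\chi$ (your explicit choice $0_\chi$), and the reduction of both sides of the order-theoretic equivalence to the containment $\mathcal{D}_f\subseteq\chi$. Your closing remark on equality of partial functions requiring equality of domains is exactly the point the paper's argument also relies on.
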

\begin{proof}
Clearly, the map $\epsilon$ is surjective, since the idempotents of $I_X$ are precisely the partial identities of $X$, and for each subset $\chi $ of $X$, there is $f\in A$ such that $\chi=\mathcal{D}_f$.\\
We show that $\epsilon$ is  a homomorphism of monoids. Let $f,f' \in A$. So, 
\[\epsilon(f+f')=
\operatorname{Id}_{\mathcal{D}_{f+f'}}=\operatorname{Id}_{\mathcal{D}_{f}\cap \mathcal{D}_{f'}}=\operatorname{Id}_{\mathcal{D}_{f}}\circ \operatorname{Id}_{\mathcal{D}_{f'}}=\epsilon(f)\circ \epsilon(f')\]
Let  $\chi \subseteq X$. We show  that  $\epsilon(f)\leq \operatorname{Id}_{\chi} \Longleftrightarrow \operatorname{Id}_{\chi}\bullet f=f $. From the definition of the action  of $\operatorname{I}_X$ on $A$, $\operatorname{Id}_{\chi}\bullet f=f \circ \operatorname{Id}_{\chi}^{-1}$ , so $\operatorname{Id}_{\chi}\bullet f=f $  $\Longleftrightarrow$ $f \circ \operatorname{Id}_{\chi}^{-1}=f$ $\Longleftrightarrow $  $f \circ \operatorname{Id}_{\chi}=f$, which occurs if  and only if 
$\mathcal{D}_f \subseteq \chi$. On the other hand, 
$\epsilon(f)\leq \operatorname{Id}_{\chi}$ $\Longleftrightarrow $ $\operatorname{Id}_{\mathcal{D}_f}\leq \operatorname{Id}_{\chi}$ $\Longleftrightarrow$ $\operatorname{Id}_{\mathcal{D}_f}=\operatorname{Id}_{\mathcal{D}_f}\circ  \operatorname{Id}_{\chi}= \operatorname{Id}_{\chi}\circ \operatorname{Id}_{\mathcal{D}_f}$, and this  occurs if  and only if 
$\mathcal{D}_f \subseteq \chi$.
\end{proof}

\begin{thm}	\label{theo-join-inverse-monoid}
	Let  $\mathscr{I}$ be the following set and with the following operation
	\begin{gather*}
\mathscr{I}=\{(f,\tau) \in A \times \operatorname{I}_X \mid  \mathcal{R}_{\tau}= \mathcal{D}_f \}\\
(f,\tau)(f',\nu)=(f+(\tau \bullet f')\,,\,\tau\nu)
	\end{gather*}
Then $\mathscr{I}$ is the restricted product $A  \Join \operatorname{I}_X$. Furthermore,  $\mathscr{I}$  is  an inverse monoid.
\end{thm}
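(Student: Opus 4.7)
The plan is to invoke Definition \ref{defn-restricted} directly, since the preceding two lemmas have assembled precisely the ingredients needed to declare the restricted product well-defined. First I would verify the three hypotheses of that definition with $M = \operatorname{I}_X$ and $S = A$. Lemma \ref{lem-IX-acts-A} supplies the action of $\operatorname{I}_X$ on $A$ by endomorphisms, namely $\tau \bullet f = f \circ \tau^{-1}$. Lemma \ref{lem-epsilon} supplies the surjective homomorphism $\epsilon: A \to E(\operatorname{I}_X)$, $\epsilon(f) = \operatorname{Id}_{\mathcal{D}_f}$, together with the compatibility $\epsilon(f) \leq \operatorname{Id}_\chi \Longleftrightarrow \operatorname{Id}_\chi \bullet f = f$. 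Hence the abstract restricted product $A \Join \operatorname{I}_X$ is defined, and by the statement of Definition \ref{defn-restricted} it is automatically an inverse semigroup.

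Next I would check that the concrete set $\mathscr{I}$ and the binary operation displayed in the statement coincide with the abstract ones. For $\tau \in \operatorname{I}_X$, the range idempotent is $r(\tau) = \tau\tau^{-1} = \operatorname{Id}_{\mathcal{R}_\tau}$, so the defining condition $r(\tau) = \epsilon(f)$ translates to $\operatorname{Id}_{\mathcal{R}_\tau} = \operatorname{Id}_{\mathcal{D}_f}$, i.e.\ $\mathcal{R}_\tau = \mathcal{D}_f$. Writing the operation in $A$ additively, the generic formula $(s,m)(s',m') = (s(m \bullet s'), mm')$ becomes $(f,\tau)(f',\nu) = (f + (\tau \bullet f'), \tau\nu)$, matching the displayed rule. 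Thus $\mathscr{I}$ is $A \Join \operatorname{I}_X$ as inverse semigroups. In the course of this step I would also record the closure check: for $(f,\tau),(f',\nu)\in\mathscr{I}$ one has $\mathcal{R}_{\tau\nu} = \tau(\mathcal{D}_\tau\cap\mathcal{R}_\nu) = \tau(\mathcal{D}_\tau\cap\mathcal{D}_{f'})$, while $\mathcal{D}_{f + (\tau\bullet f')} = \mathcal{D}_f \cap \tau(\mathcal{D}_\tau\cap\mathcal{D}_{f'}) = \mathcal{R}_\tau \cap \tau(\mathcal{D}_\tau\cap\mathcal{D}_{f'})$, and these agree since the latter set is automatically contained in $\mathcal{R}_\tau$.

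Finally I would upgrade from inverse semigroup to inverse monoid by exhibiting a two-sided identity. The natural candidate is $e := (0_X, \operatorname{Id}_X)$, where $0_X$ is the zero function on $X$; it lies in $\mathscr{I}$ because $\mathcal{R}_{\operatorname{Id}_X} = X = \mathcal{D}_{0_X}$. A direct computation gives $e(f,\tau) = (0_X + (\operatorname{Id}_X \bullet f), \tau) = (f,\tau)$, while $(f,\tau) e = (f + (\tau \bullet 0_X), \tau)$, and $\tau \bullet 0_X = 0_X \circ \tau^{-1}$ is the zero function on $\mathcal{R}_\tau = \mathcal{D}_f$, so $f + (\tau \bullet 0_X) = f$ by the pointwise addition of partial functions in $A$. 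I expect the only subtle bookkeeping to be the domain-tracking in the closure argument above; no substantive obstacle arises beyond that, since all the algebraic content has already been packaged into Lemmas \ref{lem-IX-acts-A} and \ref{lem-epsilon}.
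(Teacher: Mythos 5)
Your proposal is correct and follows essentially the same route as the paper: it verifies the hypotheses of Definition \ref{defn-restricted} via Lemmas \ref{lem-IX-acts-A} and \ref{lem-epsilon}, then translates $r(\tau)=\epsilon(f)$ into $\mathcal{R}_\tau=\mathcal{D}_f$ to identify $\mathscr{I}$ with $A\Join\operatorname{I}_X$. The only difference is that you additionally spell out the closure bookkeeping and exhibit the identity $(0_X,\operatorname{Id}_X)$, details the paper leaves implicit.
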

\begin{proof}
	From Lemma \ref{lem-IX-acts-A}, $\operatorname{I}_X$ acts on $A$ by endomorphisms and from  Lemma \ref{lem-epsilon}, this action satisfies additional conditions that ensure the existence of the restricted product $A  \Join \operatorname{I}_X$, so  $A  \Join \operatorname{I}_X$ exists and is by definition 
	$A  \Join \operatorname{I}_X=\{(f,\tau) \in A \times \operatorname{I}_X \mid  r(\tau)=\epsilon(f)\}$ (see Defn. \ref{defn-restricted}).  Since,  $r(\tau)=\tau\tau^*=\operatorname{Id}_{\mathcal{R}_{\tau}}$	 and
	$\epsilon(f)=\operatorname{Id}_{\mathcal{D}_f} $,  the condition 
	$r(\tau)=\epsilon(f)$ is equivalent to $\mathcal{R}_{\tau}= \mathcal{D}_f$. That is, 
$\mathscr{I}=$ $\{(f,\tau) \in A \times \operatorname{I}_X \mid \mathcal{R}_{\tau}= \mathcal{D}_f\}=A  \Join \operatorname{I}_X$,  an inverse monoid.
\end{proof}

\begin{thm}\label{theo_psi-hom}
	Let  $(X,r)$ be a square-free, non-degenerate involutive partial  set-theoretic solution,  defined by  $r(x,y)=(\sigma_{x}(y),\gamma_{y}(x))$,   $(x,y) \in \mathcal{D}$,  with structure  inverse monoid $\operatorname{IM}(X,r)$. Then the map 
	\begin{align*}
	\psi: \operatorname{IM}(X,r) \rightarrow A  \Join \operatorname{I}_X\\
	\psi(x)=(\delta_x,\sigma_x)\\
	\psi(g)=(\pi(g), \sigma_g)
		\end{align*}
	is an injective  homomorphism of monoids. Furthermore, $\operatorname{Im}(\psi)$, the image of  $\operatorname{IM}(X,r)$ in $A  \Join \operatorname{I}_X$,  is an inverse monoid. 
\end{thm}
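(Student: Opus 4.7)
The proof amounts to assembling pieces already constructed: the cocycle map $\pi \colon \operatorname{IM}(X,r) \to A$ from Theorem \ref{theo-pi-injective}, the monoid homomorphism $\alpha \colon \operatorname{IM}(X,r) \to \operatorname{I}_X$ from Lemma \ref{lem-existence-epsilon}, and the description of the restricted product $A \Join \operatorname{I}_X$ from Theorem \ref{theo-join-inverse-monoid}. The plan is to verify, in order, that $\psi$ lands in $A \Join \operatorname{I}_X$, that $\psi$ is a monoid homomorphism, that $\psi$ is injective, and finally that $\operatorname{Im}(\psi)$ is an inverse submonoid.

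For the first step, recall that a pair $(f,\tau)$ belongs to $A \Join \operatorname{I}_X$ exactly when $\mathcal{R}_\tau = \mathcal{D}_f$. On a generator $x_i$ this holds by the very definition of $\delta_i$, whose domain is $\mathcal{R}_{\sigma_i}$; on $x_i^{*}$ a direct computation gives $\mathcal{D}_{-\delta_i \circ \sigma_i} = \sigma_i^{-1}(\mathcal{R}_{\sigma_i}) = \mathcal{D}_{\sigma_i} = \mathcal{R}_{\sigma_i^{-1}}$. I would extend this to arbitrary $g$ by induction on word length, using the cocycle rule $\pi(gh)=\pi(g)+g\bullet\pi(h)$, via
\[ \mathcal{D}_{\pi(gh)} \;=\; \mathcal{D}_{\pi(g)} \cap \sigma_g(\mathcal{D}_{\sigma_g} \cap \mathcal{D}_{\pi(h)}) \;=\; \mathcal{R}_{\sigma_g} \cap \sigma_g(\mathcal{D}_{\sigma_g} \cap \mathcal{R}_{\sigma_h}) \;=\; \mathcal{R}_{\sigma_g \sigma_h} \;=\; \mathcal{R}_{\sigma_{gh}}. \]

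For the homomorphism property one compares
\[ \psi(g)\psi(h) \;=\; \bigl(\pi(g) + \sigma_g \bullet \pi(h),\; \sigma_g \sigma_h\bigr) \quad \text{and} \quad \psi(gh) \;=\; (\pi(gh),\; \sigma_{gh}). \]
The second coordinates agree because $\alpha$ is a monoid homomorphism, and the first agree by the cocycle identity used to define $\pi$, together with the observation from Lemma \ref{lem-existence-epsilon}(iv) that the action of $g$ on $A$ coincides with that of $\sigma_g$. The fact that $\pi$ is itself well defined on the quotient $\operatorname{IM}(X,r)$ — established in Lemmas \ref{lem-pi-well-def1}, \ref{lem-pi-well-def2}, \ref{lem-pi-well-def3}, together with Lemma \ref{lem-formules-invol+braided-partial}(iii) — guarantees that the defining relations of $\operatorname{IM}(X,r)$ are respected componentwise.

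Injectivity of $\psi$ is immediate from the injectivity of its first coordinate $\pi$, by Theorem \ref{theo-pi-injective}. Since $\operatorname{IM}(X,r)$ is an inverse monoid and $\psi$ is a monoid homomorphism into the inverse monoid $A \Join \operatorname{I}_X$, the homomorphic image $\operatorname{Im}(\psi)$ is automatically an inverse submonoid — concretely, $\psi(g^{*})$ is the unique inverse of $\psi(g)$ in the restricted product. The main obstacle I anticipate is the bookkeeping in the first step: the equality $\mathcal{R}_{\sigma_g} = \mathcal{D}_{\pi(g)}$ must be maintained under multiplication, but all the necessary set-theoretic identities on $\sigma$-domains are already encoded in the braid and involution relations of Lemma \ref{lem-formules-invol+braided-partial}.
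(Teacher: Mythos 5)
Your proposal is correct and follows essentially the same route as the paper: an induction on word length to verify $\mathcal{D}_{\pi(g)}=\mathcal{R}_{\sigma_g}$ (so $\psi$ lands in $A\Join \operatorname{I}_X$), the cocycle identity plus the homomorphism $\alpha$ for multiplicativity, injectivity of $\pi$ from Theorem \ref{theo-pi-injective}, and the fact that a homomorphic image of an inverse monoid is an inverse monoid. The only cosmetic difference is that the paper carries out the induction one generator (or starred generator) at a time, splitting the cases $\epsilon_k=\pm 1$, whereas you state the domain computation for a general product $gh$; the content is the same.
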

\begin{proof}
Let $g \in \operatorname{IM}(X,r)$, with 	$\psi(g)=(\pi(g), \sigma_g)$.  First, we show that $(\pi(g), \sigma_g)$ belongs indeed to  $A \Join \operatorname{I}_X$, that is  $\mathcal{D}_{\pi(g)}=\mathcal{R}_{\sigma_g}$ is satisfied. 	Let $g=x_{j_1}^{\epsilon_1}x_{j_2}^{\epsilon_2}...x_{j_k} ^{\epsilon_k}\,\in \operatorname{IM}(X,r)$, where for every $1 \leq i \leq k$, $x_{j_i}\in X$,  $\epsilon_i =\pm1$ (here the $*$ is $-1$).  The proof is  by induction on $k$. \\
	For $k=1$, if $g=x_{j_1}$, then 	$\psi(x_{j_1})=(\delta_{j_1},\sigma_{j_1})$ and   $\mathcal{D}_{\delta_{j_1}}=\mathcal{R}_{\sigma_{j_1}}$, from the definition of 
	$\delta_{j_1}$.  If  $g=x_{j_1}^{-1}$,  then 	$\psi(x_{j_1}^{-1})=(-\delta_{j_1}\circ \sigma_{j_1},\sigma_{j_1}^{-1})$ and we have 
	$\mathcal{R}_{	\sigma_{j_1}^{-1}}=\mathcal{D}_{\sigma_{j_1}}$ and $\mathcal{D}_{\delta_{j_1}\circ \sigma_{j_1}}=\mathcal{D}_{ \sigma_{j_1}}$ also,  since $\mathcal{R}_{\sigma_{j_1}}=\mathcal{D}_{\delta_{j_1}}$. For $k>1$, assume $\mathcal{D}_{\pi(g')}=\mathcal{R}_{\sigma_{g'}}$, where  $g'=x_{j_1}^{\epsilon_1}x_{j_2}^{\epsilon_2}...x_{j_{k-1}} ^{\epsilon_{k-1}}$, $g=g'\,x_{j_k} ^{\epsilon_k}$ and  $\delta=\pi(x_{j_k}^{\epsilon_k})$. So, $ \pi(g)=\pi(g')+g'\bullet \delta$   and  from the  induction assumption,  $\mathcal{D}_{\pi(g)}=\mathcal{D}_{\pi(g')}\cap \mathcal{D}_{g'\bullet \delta}= \mathcal{R}_{\sigma_{g'}}\cap \mathcal{D}_{\delta\circ \sigma^{-1}_{g'}}$. \\
			If $\epsilon_k=1$, then $\delta=\delta_k$, and: \begin{gather*}
	\mathcal{D}_{\delta\circ \sigma_{g'}^{-1}}=\sigma_{g'}(\mathcal{D}_{ \sigma_{g'}^{}}\cap\mathcal{D}_{ \delta_{j_k}})
	=\sigma_{g'}(\mathcal{D}_{ \sigma_{g'}^{}}\cap\mathcal{R}_{ \sigma_{j_k}})\\
\Longrightarrow	\mathcal{D}_{\pi(g)}=\mathcal{R}_{\sigma_{g'}}\cap \sigma_{g'}(\mathcal{D}_{ \sigma_{g'}^{}}\cap\mathcal{R}_{ \sigma_{j_k}})=\mathcal{R}_{ \sigma_{g'}^{}\sigma_{j_k}}=\mathcal{R}_{ \sigma_{g}}
		\end{gather*}
\margin_comment{\textcolor{red}{cosmetic changes}}
		If $\epsilon_k=-1$, then $\delta=-\delta_{j_k}\circ \sigma_{j_k}$, and:
		\begin{gather*}
\mathcal{D}_{g'\bullet \delta}=\mathcal{D}_{\delta_{j_k}\circ \sigma_{j_k}\circ \sigma_{g'}^{-1}}=	\sigma_{g'}^{}\circ \sigma_{j_k}^{-1}(	\mathcal{R}_{\sigma_{j_k}}\cap \sigma_{j_k}(	\mathcal{D}_{\sigma_{g'}}))\\
=\sigma_{g'}(	\mathcal{D}_{\sigma_{j_k}}\cap 	\mathcal{D}_{\sigma_{g'}})=
\mathcal{R}_{\sigma_{g'}} \cap \sigma_{g'}(	\mathcal{D}_{\sigma_{j_k}})=
\mathcal{R}_{\sigma_{g'}} \cap \sigma_{g'}(	\mathcal{R}_{\sigma_{j_k}^{-1}})\\
\Longrightarrow \mathcal{D}_{\pi(g)}=	\mathcal{R}_{\sigma_{g'}} \cap \sigma_{g'}(	\mathcal{R}_{\sigma_{j_k}^{-1}})= \mathcal{R}_{\sigma_{g'}\sigma_{j_k}^{-1}} =\mathcal{R}_{\sigma_{g}}
	\end{gather*}

	Next, we show that $\psi$ is an injective homomorphism of monoids. Let $g,h \in \operatorname{IM}(X,r)$. Then $\psi(gh)=(\pi(gh), \sigma_{gh})=(\pi(g)+g \bullet \psi(h), \sigma_g\circ \sigma_h)=\psi(g)\psi(h)$, from the definition of the product in $ A  \Join \operatorname{I}_X$. The injectivity of $\psi$ results from the injectivity of $\pi$  (see Theorem \ref{theo-pi-injective}). As the homomorphic image of an inverse monoid is an inverse monoid (\cite[p.30]{lawson}), $\operatorname{Im}(\psi)$ is an inverse monoid.

\end{proof}
Note that from the definition of the inverse in an inverse semigroup, 	$\psi(g)^*\psi(g)\psi(g)^*=\psi(g)$, so  $\psi(g)^*= (-\pi(g)\circ \sigma_g,\;\sigma_g^{-1})$.

\section{Definition of partial  braces and their properties}
In \cite{partial-s}, the authors define \emph{a partial semigroup} to be a set  $S$ together with an operation $\oplus$ that maps a subset $\mathcal{D}\subset S \times S$ into $S$ and satisfies the associative law $(g \oplus h)\oplus k=g \oplus(h\oplus k)$, in the sense that if either side is defined then so is the other and they are equal. We define a \emph{a partial monoid} to be a partial semigroup, with an identity $1$ such that $g \oplus 1$, $1\oplus g$  are always defined, equal and equal to $g$. We say that a partial monoid is \emph{commutative}, if $g \oplus h=h \oplus g$, whenever both are defined.
\begin{defn}
	A \emph{partial left   brace}  is a set  $\mathcal{B}$ with two operations, $\oplus$ and $\cdot$, such that $(\mathcal{B},\oplus)$ is a commutative partial monoid, $(\mathcal{B},\cdot)$ is an inverse monoid  and for every $a,b,c \in \mathcal{B}$, such that  $b\oplus c$ and  $a \cdot b \oplus a \cdot c $ are defined, the following holds:
	\begin{equation}\label{defn-dist}
a \cdot (b\oplus c) = a \cdot b \oplus a \cdot c 
	\end{equation}
	$(\mathcal{B},\oplus)$  is called \emph{the partial monoid of the brace } and $(\mathcal{B},\cdot)$  is called \emph{the  multiplicative inverse monoid of the brace}.\\
	A \emph{partial  right brace} is defined similarly:  for every $a,b,c \in \mathcal{B}$,    $(**): (a\oplus b) \cdot c  = a \cdot c \oplus b \cdot c$.
\end{defn}
In what follows, we show that a  non-degenerate involutive partial set-theoretic solution, with structure inverse monoid $\operatorname{IM}(X,r)$,  induces  a partial left brace\\ $(\operatorname{IM}(X,r), \oplus,\cdot)$. We define a new partial operation $\oplus$ in $\operatorname{IM}(X,r)$, and in order to make the definition of $\oplus$  easier we use the method
of right reversing. P. Dehornoy introduced this tool in the context of braids and Garside groups, and we refer the reader to \cite{garside1}, \cite{dehornoy},  and in particular to \cite{dehornoy2} for a wider understanding of this topic. 
\subsection{A   very brief presentation of   the method of right-reversing}

 Roughly, reversing can be used as  a  tool for constructing van Kampen diagrams in the context of presented semigroups or monoids.  Let $M=\operatorname{Mon}\langle X\mid R\rangle$, with $R$ a 
 family of pairs of  nonempty words in the alphabet $X$, called relations. As is well-known, two strings  $w$ and $w'$  are equal in $M$ if and only if there exists an $R$-derivation from $w$ to $w'$, defined to be a finite sequence of words $(w_0,..., w_p)$ such that $w_0$ is $w$, $w_p$ is $w'$, and, for each $i$, $w_{i+1}$ is obtained from $w_i$ by substituting some subword that occurs in a relation of $R$  with the other element of that relation. A van Kampen diagram for a pair of words  $(w,w')$ is a planar oriented graph with a unique source vertex and a unique  sink  vertex and edges labeled by letters of  $X$ so that the labels of each face correspond to a relation of $R$ and the labels of the bounding paths form the words $w$ and $w'$,  respectively.
The strings $w$ and $w'$ are equal in $M$ if and only if there exists a  van Kampen diagram for $(w,w')$. It  is  convenient to standardize van Kampen  diagrams,  so that they only contain vertical and horizontal edges, plus dotted arcs connecting vertices that are to be identified. Such standardized diagrams are called \emph{reversing diagrams}.  Here we consider only right-reversing diagrams. We  illustrate in the following example the construction  of a  right-reversing diagram.
\begin{ex}\label{ex-reversing}
	We consider the solution  from Example \ref{exemple:exesolu_et_gars}, with structure monoid 
	$M=\operatorname{Mon} \langle X\mid  x_{1}x_{2}=x^{2}_{3}; x_{1}x_{3}=x_{2}x_{4};
	x_{2}x_{1}=x^{2}_{4}; x_{2}x_{3}=x_{3}x_{1};
	x_{1}x_{4}=x_{4}x_{2};x_{3}x_{2}=x_{4}x_{1} \rangle$. 
We illustrate with the following figure how to construct a reversing diagram, that represents a van kampen diagram for a  pair of words $(w,w')$, such that $x_1x_2$ is the prefix of $w$ and $x_2x_1$ is the prefix of $w'$. We begin with the left-most figure, with source the star, and two paths labelled  $x_1x_2$  and $x_2x_1$ respectively. We  complete the first left square with the defining relation $x_{1}x_{3}=x_{2}x_{4}$. At the next step, we  complete simultaneously two squares, using $x_{2}x_{3}=x_{3}x_{1}$ and $x_{1}x_{4}=x_{4}x_{2}$. At the last step, we close the diagram using $x_{1}x_{3}=x_{2}x_{4}$. In the down right diagram, the labels of all the directed paths from the upper left star to the other star represent the same element in $M$.
	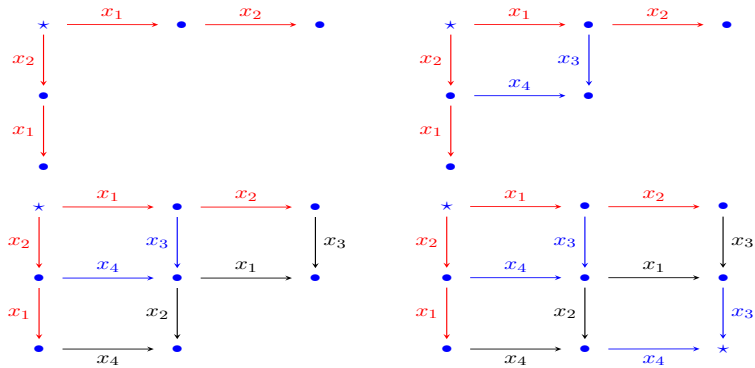
\begin{figure}[H] 
		\scalebox{0.8}[0.6]{
	\begin{tikzpicture}
	\matrix (m) [matrix of math nodes,row sep=3em,column sep=4em,minimum width=2em,color=blue, ampersand replacement=\&]
	{ 	\star \& \bullet \& \bullet \\
			\bullet\& \&  \\
		\bullet\& \& \\};
	\path[-stealth]
	(m-1-1) edge [red] node [above] {$x_1$} (m-1-2)
	(m-1-2) edge [red] node [above] {$x_2$} (m-1-3)

	(m-1-1) edge [red]node [left] {$x_2$} (m-2-1)
		(m-2-1) edge [red]node [left] {$x_1$} (m-3-1)	
	;
	\end{tikzpicture} }
\hspace{10pt}
	\scalebox{0.8}[0.6]{
			\begin{tikzpicture}
		\matrix (m) [matrix of math nodes,row sep=3em,column sep=4em,minimum width=2em,color=blue, ampersand replacement=\&]
		{ 	\star\& \bullet \& \bullet \\
			\bullet\& \bullet \&  \\
			\bullet\& \& \\};
		\path[-stealth]
		(m-1-1) edge [red] node [above] {$x_1$} (m-1-2)
		(m-1-2) edge [red] node [above] {$x_2$} (m-1-3)

		(m-1-1) edge [red]node [left] {$x_2$} (m-2-1)
		(m-2-1) edge [red]node [left] {$x_1$} (m-3-1)
		
			(m-1-2) edge [blue] node [left] {$x_3$} (m-2-2)
				(m-2-1) edge [blue] node [above] {$x_4$} (m-2-2)
		;
		\end{tikzpicture}}
		
			\scalebox{0.8}[0.6]{
			\begin{tikzpicture}
		\matrix (m) [matrix of math nodes,row sep=3em,column sep=4em,minimum width=2em,color=blue, ampersand replacement=\&]
		{ 	\star\& \bullet \& \bullet \\
			\bullet\& \bullet \& \bullet \\
			\bullet\& \bullet \& \\};
		\path[-stealth]
		(m-1-1) edge [red] node [above] {$x_1$} (m-1-2)
		(m-1-2) edge [red] node [above] {$x_2$} (m-1-3)

		(m-1-1) edge [red]node [left] {$x_2$} (m-2-1)
		(m-2-1) edge [red]node [left] {$x_1$} (m-3-1)
		
		(m-1-2) edge [blue] node [left] {$x_3$} (m-2-2)
		(m-2-1) edge [blue] node [above] {$x_4$} (m-2-2)
		
			(m-2-2) edge [] node [above] {$x_1$} (m-2-3)
		(m-1-3) edge [] node [right] {$x_3$} (m-2-3)
			(m-2-2) edge [] node [left] {$x_2$} (m-3-2)
		(m-3-1) edge [] node [below] {$x_4$} (m-3-2)
		;
		\end{tikzpicture}}
		\hspace{10pt}
		\scalebox{0.8}[0.6]{		\begin{tikzpicture}
		\matrix (m) [matrix of math nodes,row sep=3em,column sep=4em,minimum width=2em,color=blue, ampersand replacement=\&]
		{ 	\star\& \bullet \& \bullet \\
			\bullet\& \bullet \& \bullet \\
			\bullet\& \bullet \& \star\\};
		\path[-stealth]
		(m-1-1) edge [red] node [above] {$x_1$} (m-1-2)
		(m-1-2) edge [red] node [above] {$x_2$} (m-1-3)

		(m-1-1) edge [red]node [left] {$x_2$} (m-2-1)
		(m-2-1) edge [red]node [left] {$x_1$} (m-3-1)
		
		(m-1-2) edge [blue] node [left] {$x_3$} (m-2-2)
		(m-2-1) edge [blue] node [above] {$x_4$} (m-2-2)
		
		(m-2-2) edge [] node [above] {$x_1$} (m-2-3)
		(m-1-3) edge [] node [right] {$x_3$} (m-2-3)
		(m-2-2) edge [] node [left] {$x_2$} (m-3-2)
		(m-3-1) edge [] node [below] {$x_4$} (m-3-2)
		
			(m-2-3) edge [blue] node [right] {$x_3$} (m-3-3)
		(m-3-2) edge [blue] node [below] {$x_4$} (m-3-3)		
		;
		\end{tikzpicture}}
	\caption{Right reversing  to build a van kampen diagram for the pair $(x_1x_2x_3^2, x_2x_1x_4^2)$. }\label{fig-revers}
\end{figure}
\end{ex}
The process of reversing is not successful for  every monoid, and for every pair of elements.  Indeed, if a  monoid is a Garside monoid, as in the case of the monoid $M$ in Example \ref{ex-reversing}, then right and left reversing are successful for every pair of elements in the monoid  \cite{dehornoy,dehornoy2}. But in an arbitrary monoid, it is not necessarily true anymore, and it may occur that the process never terminates, and even if it terminates    there may be  some  obstructions. Assume we have a subdiagram with horizontal edge labelled $x$ and vertical edge labelled $y$.  If there is no relation $x...=y... $ in $R$, then the subdiagram  cannot be completed, and so the diagram neither.   On the opposite, if there are more than one relation $x...=y... $ in $R$, then there may be several  different  ways to close the diagram. 
We illustrate with the following example an obstruction of the first kind  in the use of right reversing that occurs  for the structure inverse monoid of a partial solution.

\begin{ex}\label{ex-revers-not-works}
Let $\operatorname{IM}(X,r)=\operatorname{Inv}\langle x_0,x_1,x_2 \mid 	 x_0x_2 =x_2x_1; x_1x_2=x_2x_0\rangle$ be the  structure inverse monoid of the partial solution described in  Example \ref{ex-square-free-partial-sol}. As there is no defining relation $x_0...=x_1...$, the mostright  diagram in Figure \ref{fig-partial} cannot be completed

\begin{figure}

\scalebox{0.9}[0.9]{
\begin{tikzpicture}
\matrix (m) [matrix of math nodes,row sep=3em,column sep=4em,minimum width=2em,color=blue, blue, ampersand replacement=\&]
{ \star  \& \bullet \\
	\bullet \&  \star \\
};
\path[-stealth]
(m-1-1) edge [red] node [above] {$x_0$} (m-1-2)
(m-1-1) edge [red]node [left] {$x_2$} (m-2-1)
(m-2-1) edge [red] node [below] {$x_1$} (m-2-2)
(m-1-2) edge [red]node [right] {$x_2$} (m-2-2)
;
\end{tikzpicture}}
\hspace{15pt}
\scalebox{0.9}[0.9]{
 \begin{tikzpicture}
 \matrix (m) [matrix of math nodes,row sep=3em,column sep=4em,minimum width=2em,color=blue, ampersand replacement=\&]
 { \star  \& \bullet \\
 	\bullet \&  \star \\
 };
 \path[-stealth]
 (m-1-1) edge [red] node [above] {$x_1$} (m-1-2)
 (m-1-1) edge [red]node [left] {$x_2$} (m-2-1)
 (m-2-1) edge [red] node [below] {$x_0$} (m-2-2)
 (m-1-2) edge [red]node [right] {$x_2$} (m-2-2)
 ;
 \end{tikzpicture}}
 \hspace{15pt}
 \scalebox{0.9}[0.9]{
 \begin{tikzpicture}
 \matrix (m) [matrix of math nodes,row sep=3em,column sep=4em,minimum width=2em,color=blue, ampersand replacement=\&]
 { \star  \& \bullet \\
 	\bullet \&  \\
 };
 \path[-stealth]
 (m-1-1) edge [red] node [above] {$x_0$} (m-1-2)
 (m-1-1) edge [red]node [left] {$x_1$} (m-2-1)
 ;
 \end{tikzpicture}}
 \caption{Right reversing in $\operatorname{IM}(X,r)$, $(X,r)$  a partial solution: the mostright cube could not be completed.}\label{fig-partial} 
\end{figure}
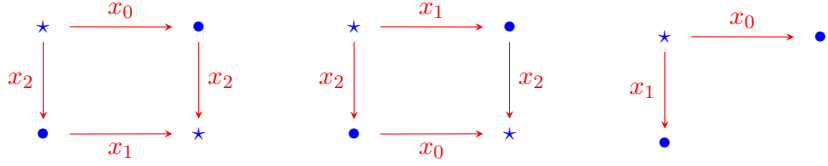

\end{ex}
\subsection{Partial braces and  partial solutions}\margin_comment{\textcolor{red}{MANY CHANGES  !!!}}
Given $(X,r)$  a non-degenerate involutive partial solution with structure inverse monoid  $\operatorname{IM}(X,r)$, the set of defining relations can be described in several forms that are useful for computations.
\begin{lem}\label{lem-defn-R}\margin_comment{\textcolor{red}{added lemma}}
	Let $(X,r)$ be a non-degenerate involutive partial solution with structure inverse monoid  $\operatorname{IM}(X,r)$.  Let $\mathcal{R}$ denote the set of defining relations of   $\operatorname{IM}(X,r)$. Then 
	\begin{gather}
\mathcal{R}=\{x_ix_{\sigma_i^{-1}(j)}\,=\,x_jx_{\sigma_j^{-1}(i)} \;\;\;\mid \;\;\;(x_i,x_{\sigma_i^{-1}(j)}) \in \mathcal{D}\} \label{eqn-R-sigma}\\
\mathcal{R}=	\{x_{\gamma_k^{-1}(l)}x_k\,=\,x_{\gamma_l^{-1}(k)}x_l\;\;\; \mid \;\;\;(x_{\gamma_k^{-1}(l)},x_k)\in \mathcal{D}\}\label{eqn-R-gamma}
	\end{gather}
\end{lem}
\begin{proof}
If $(x_i,x_{\sigma_i^{-1}(j)}) \in \mathcal{D}$, then $r(x_i,x_{\sigma_i^{-1}(j)})=(x_{\sigma_i\sigma_i^{-1}(j)}, x_{\gamma_{\sigma_i^{-1}(j)}(i)})=(x_j, x_{\gamma_{\sigma_i^{-1}(j)}(i)})$. From Lemma \ref{lem-formules-invol+braided-partial}, by replacing $x$ by  $i$ and $y$ by  $\sigma_i^{-1}(j)$  in $(\ref{eq-inv1})$, we have  $\gamma_{\sigma_i^{-1}(j)}(i)=\sigma_j^{-1}(i)$, that is $r(x_i,x_{\sigma_i^{-1}(j)})=(x_j, x_{\sigma_j^{-1}(i)})$ and  $\mathcal{R}$ can be described by  Equation \ref{eqn-R-sigma}.            

\noindent If  $(x_{\gamma_k^{-1}(l)},x_k)\in \mathcal{D}$, then $r(x_{\gamma_k^{-1}(l)},x_k)=(x_{\sigma_{\gamma_k^{-1}(l)}(k)},x_{\gamma_k\gamma_k^{-1}(l)})=(x_{\sigma_{\gamma_k^{-1}(l)}(k)},x_l)$. 
From Lemma \ref{lem-formules-invol+braided-partial}, by replacing $x$ by  $\gamma_k^{-1}(l)$ and $y$ by  $k$  in $(\ref{eq-inv2})$, we have  $\sigma_{\gamma_k^{-1}(l)}(k)=\gamma_l^{-1}(k)$, that is $r(x_{\gamma_k^{-1}(l)},x_k)=(x_{\gamma_l^{-1}(k)}, x_l)$ and the set  $\mathcal{R}$  can be described by Equation \ref{eqn-R-gamma}.
\end{proof}
Note that the set $\mathcal{R}$ as defined from $(X,r)$  is unique and Equations \ref{eqn-R-sigma}-\ref{eqn-R-gamma} are just different ways to describe it.
\margin_comment{\textcolor{red}{added rmk}} 
\begin{rem}\label{rem-unique-rel}
From these presentations of   $\mathcal{R}$ and the properties of  a  partial solution ($\sigma_i, \gamma_i$, $1 \leq i \leq n$,  are partial bijections), for every  pair $x_i,x_j \in X$, there is at most one defining relation of the form $x_it=x_jz$, with $t,z \in X$, and for every  pair  $x_k,x_l \in X$, there is at most one defining relation of the form $t'x_k=z'x_l$, with $t',z' \in X$. 
\end{rem}
Let $\mathcal{R}^*$ denote the following set of  relations, described in two ways (as in Lemma \ref{lem-defn-R}): 
 	\begin{gather}
 \mathcal{R}^*=\{x^*_{\sigma_i^{-1}(j)}x_i^*\,=\,x^*_{\sigma_j^{-1}(i)} x^*_j\;\;\;\mid \;\;\;(x_i,x_{\sigma_i^{-1}(j)}) \in \mathcal{D}\} \label{eqn-R*-sigma}\\
 \mathcal{R}^*=	\{x^*_kx^*_{\gamma_k^{-1}(l)}\,=\,x^*_lx^*_{\gamma_l^{-1}(k)}\;\;\; \mid \;\;\;(x_{\gamma_k^{-1}(l)},x_k)\in \mathcal{D}\}\label{eqn-R*-gamma}
 \end{gather}
Let $\mathcal{R}_{\rho}$ denote the following set of  relations, described in two ways (as in Lemma \ref{lem-defn-R}):
	\begin{gather*}
	\mathcal{R}_\rho=	\mathcal{R} \cup \mathcal{R} ^*\cup \{x^*_ix_j=x_{\sigma_i^{-1}(j)}x^*_{\sigma_j^{-1}(i)}\;\;;\;\; x^*_jx_i=x_{\sigma_j^{-1}(i)}x^*_{\sigma_i^{-1}(j)}\;\;\;\mid \;\;\; (x_ix_{\sigma_i^{-1}(j)}\,=\,x_jx_{\sigma_j^{-1}(i)})\in \mathcal{R}\}\\
		\mathcal{R}_\rho=	\mathcal{R} \cup \mathcal{R} ^*\cup \{x^*_{\gamma_k^{-1}(l)}x_{\gamma_l^{-1}(k)}=x_lx^*_k\;\;;\;\; x^*_{\gamma_l^{-1}(k)}x_{\gamma_k^{-1}(l)}=x_kx^*_l\;\;\;\mid \;\;\; (x_{\gamma_k^{-1}(l)}x_k\,=\,x_{\gamma_l^{-1}(k)}x_l)\in \mathcal{R}\}
	\end{gather*}
	Let  $\rho$ be the  congruence generated by 	$\mathcal{R}_\rho$ on  $\operatorname{IM}(X,r)$ and $\operatorname{IM}(X,r)/\rho$ the corresponding quotient monoid. We can reformulate now Theorem 2 in  a  more precise way:
\begin{thm}	 \label{thm-2-precise} \margin_comment{\textcolor{red}{Thm changed}}
	Let $(X,r)$ be a non-degenerate involutive partial solution with structure inverse monoid  $\operatorname{IM}(X,r)$.  	Let  $\rho$ be the  congruence generated by 	$\mathcal{R}_\rho$ on  $\operatorname{IM}(X,r)$. Then  there exists a partial left brace $(\mathcal{B},\oplus, \cdot)$  such that $\operatorname{IM}(X,r)/\rho$  is isomorphic $(\mathcal{B},\cdot)$.
\end{thm}
\begin{proof}
 Let $\operatorname{IM}_\rho$ denote the quotient monoid $\operatorname{IM}(X,r)/\rho$. 
 We define in $\operatorname{IM}_\rho$    a partial operation,  $\oplus$, and show that  $(\operatorname{IM}_\rho,\,\oplus,\,\cdot)$ is a partial brace.  Clearly, 
 $(\operatorname{IM}_\rho , \,\cdot)$ is an inverse monoid (and we  omit the $\cdot$). It remains to show that  $(\operatorname{IM}_\rho , \,\oplus)$ is a partial commutative monoid. The proof contains two parts: the iterative definition of $\oplus$ in $\operatorname{IM}_\rho$  and the proof $\oplus$ is well-defined and satisfies the relevant properties. 

	\margin_comment{\textcolor{red}{defn on $X \times X$}}
	First, we define $\oplus$  for pairs of elements of $X$. Let $x_i,x_j \in X$ such that  $(x_i,x_{\sigma_i^{-1}(j)}) \in \mathcal{D}$. Then  $x_i \oplus x_j$ and $x_j \oplus x_i$ exist  and are  defined by:
		\begin{gather}
	x_i\oplus x_j=x_ix_{\sigma_i^{-1}(j) }\;\;\;\;\;\;  x_j\oplus x_i=x_jx_{\sigma_j^{-1}(i) }\label{eqn++}
	\end{gather}
Since $x_i x_{\sigma_i^{-1}(j)}=x_jx_{\sigma_j^{-1}(i)}$ in $\operatorname{IM}(X,r)$, from Lemma \ref{lem-defn-R},   $x_i \oplus x_j=x_j \oplus x_i$ in $\operatorname{IM}_\rho$. \\
	\margin_comment{\textcolor{red}{defn on $X^* \times X^*$}}
Second, we define $\oplus$  for pairs of elements of $X^{*}$, where $X^{*}=\{x^*\mid x\in X\}$.  Let $x_k,x_l \in X$,  
	such that  $(x_{\gamma_k^{-1}(l)},x_k)\in \mathcal{D}$. 	Then $x_k^*\oplus x_l^*$ and $x_l^*\oplus x_k^*$ exist  and  are defined by:
	\begin{gather}
x_k^*\oplus x_l^*= x_k^*x^*_{\gamma_k^{-1}(l)}\;\;\;\;\;\; x_l^*\oplus x_k^*= x_l^*x^*_{\gamma_l^{-1}(k)}\label{eqn--}
	\end{gather}
	From Lemma \ref{lem-defn-R},  $x_{\gamma_k^{-1}(l)}x_k\,=\,x_{\gamma_l^{-1}(k)}x_l$,   so  $x_k^*x^*_{\gamma_k^{-1}(l)}= x_l^*x^*_{\gamma_l^{-1}(k)}$ also  in $\operatorname{IM}(X,r)$, that is  $x_k^*\oplus x_l^*=x_l^*\oplus x_k^*$ in $\operatorname{IM}_\rho$. 
	
Third, we define $\oplus$  for pairs of elements of $X\cup X^{*}$.  Let $x_i,x_j \in X$. If  	\margin_comment{\textcolor{red}{defn on $X \times X^*$}}
	$x_i \oplus x_j$ exists and $x_i \oplus x_j=x_it=x_jz$, with $t,z \in X$,  then $t\oplus x^*_i$  and $z\oplus x^*_j$ exist  and, for $x^*_i,x^*_j ,t^*,z^*\in X^*$: 
	\begin{gather}
	t\oplus x_i^*= tz^*   \;\;\;\;\;\;           	x_i^*\oplus t=x_i^*x_j \label{eqn-+1}\\
	z\oplus x_j^*= zt^*\;\;\;\;\;\;  x_j^*\oplus z= x_j^*x_i\label{eqn-+2}
	\end{gather}
From the definition of  $\mathcal{R}_\rho$,  $tz^*=    x_i^*x_j$  and  $zt^*= x_j^*x_i	$ in $\operatorname{IM}_\rho$, that is  $	z\oplus x_j^*=x_j^*\oplus z$  and 
$t\oplus x_i^*=x_i^*\oplus t$  in $\operatorname{IM}_\rho$. Note that whenever defined, $\oplus$ is well-defined for pairs of elements in $X\cup X^*$  (see Remark \ref{rem-unique-rel}).  In Figure \ref{fig-defn-+-}, we illustrate diagrammatically  the motivation for the equations (\ref{eqn++}) - (\ref{eqn-+2}). 
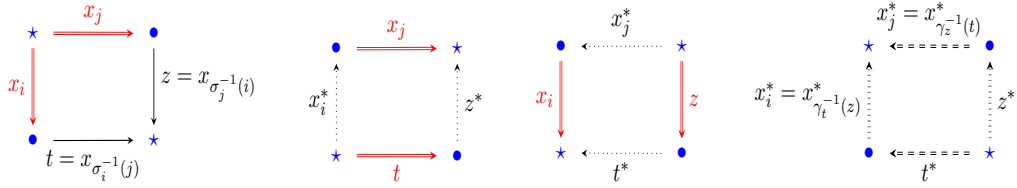
\begin{figure}[H]
	\scalebox{0.7}[0.9]{
		\begin{tikzpicture}
		\matrix (m) [matrix of math nodes,row sep=3em,column sep=4em,minimum width=2em,color=blue,ampersand replacement=\&]
		{ \star \& \bullet\\
			\bullet \& \star\\};
		\path[-stealth]
		(m-1-1) edge [double,red] node [above] {$x_j$} (m-1-2)
		
		(m-1-1) edge [double,red]node [left] {$x_i$} (m-2-1)
		
		(m-1-2) edge node [right] {$z=x_{\sigma^{-1}_j(i)}$} (m-2-2)
		
		(m-2-1) edge node [below] {$t=x_{\sigma^{-1}_i(j)}$} (m-2-2)
		;
		\end{tikzpicture}}
	\hspace{5pt}
	\scalebox{0.7}[0.9]{	\begin{tikzpicture}
		\matrix (m) [matrix of math nodes,row sep=3em,column sep=4em,minimum width=2em,color=blue,ampersand replacement=\&]
		{ \bullet \& \star\\
			\star 	\& \bullet\\};
		\path[-stealth]
		(m-1-1) edge [double,red] node [above] {$x_j$} (m-1-2)
		
		(m-2-1) edge [dotted]node [left] {$x_i^{*}$} (m-1-1)
		
		(m-2-2) edge[dotted] node [right] {$z^{*}$} (m-1-2)
		
		(m-2-1) edge[double,red] node [below] {$t$} (m-2-2)
		;
		\end{tikzpicture}}
	\hspace{5pt}
	\scalebox{0.7}[0.9]{	\begin{tikzpicture}
		\matrix (m) [matrix of math nodes,row sep=3em,column sep=4em,minimum width=2em,color=blue,ampersand replacement=\&]
		{  \bullet \& \star\\
			\star \& \bullet\\};
		\path[-stealth]
		(m-1-2) edge [dotted] node [above] {$x_j^{*}$} (m-1-1)
		
		(m-1-1) edge [double,red]node [left] {$x_i$} (m-2-1)
		
		(m-1-2) edge[double,red] node [right] {$z$} (m-2-2)
		
		(m-2-2) edge[dotted] node [below] {$t^{*}$} (m-2-1)
		;
		\end{tikzpicture}}
	\hspace{5pt}
	\scalebox{0.7}[0.9]{	\begin{tikzpicture}
		\matrix (m) [matrix of math nodes,row sep=3em,column sep=4em,minimum width=2em,color=blue,ampersand replacement=\&]
		{ \star \&  \bullet  \\
			\bullet \&	\star  \\};
		\path[-stealth]
		(m-1-2) edge [double, dashed] node [above] {$x_j^{*}=x^*_{\gamma^{-1}_z(t)}$} (m-1-1)
		
		(m-2-1) edge  [double, dotted]node [left] {$x_i^{*}=x^*_{\gamma^{-1}_t(z)}$} (m-1-1)
		
		(m-2-2) edge[double,dotted] node [right] {$z^{*}$} (m-1-2)
		
		(m-2-2) edge [double, dashed]node [below] {$t^{*}$} (m-2-1)
		;
		\end{tikzpicture}}
	\caption{From $x_it=x_jz$ in  the reversing diagram for $x\oplus y$ (most-left), we define $x_i^{*} \oplus z$,  $x_j^{*} \oplus t$, $t^{*} \oplus z^{*}$ by reversing parallel arrows. After  inversion of the direction of an arrow labelled $x$, the arrow is labelled $x^*$. Directed labelled paths  from  a star to another give equal  elements in $\operatorname{IM}_{\rho}$.}	\label{fig-defn-+-}
\end{figure}
	\margin_comment{\textcolor{red}{extended defn  to $\operatorname{IM}_{\rho}$}}
	Next, we  define $\oplus$  for pairs of elements in $\operatorname{IM}_{\rho}$.  
	For every $g \in \operatorname{IM}_{\rho}$, we define 
	\begin{gather}\label{eq-plus1}
	g \oplus 1=g1=1g\\
g\oplus g=g 1=1g \label{eq-plusss}
	\end{gather}
 For any two elements $g,h \in \operatorname{IM}_{\rho}$, we use 
 an extended version of  a  reversing diagram  in which we allow  arrows labelled with elements from $X^*$, to define iteratively $g\oplus h$.
 
   \begin{figure}[H]
 	\scalebox{0.8}[0.9]{
 		\begin{tikzpicture}
 		\matrix (m) [matrix of math nodes,row sep=3em,column sep=4em,minimum width=2em,color=blue,ampersand replacement=\&]
 		{ \star \& \bullet \& \bullet \& \bullet \& \bullet\\
 			\bullet \&  \bullet\& \bullet\& \bullet\& \bullet\\
 			\bullet \&  \bullet\& \bullet\& \bullet\& \bullet\\
 			\bullet \&  \bullet\& \bullet\& \bullet\&  \star\\};
 		\path[-stealth]
 		(m-1-1) edge [double,blue] node [above] {$x_{i_1}$} (m-1-2)
 		(m-1-2) edge [double,blue] node [above] {$x^*_{i_2}$} (m-1-3)
 		(m-1-3) edge [double,blue] node [above] {$...$} (m-1-4)
 		(m-1-4) edge [double,blue] node [above] {$x_{i_k}$} (m-1-5)
 		
 		(m-1-1) edge [double,red]node [left] {$x_{j_1}^{*}$} (m-2-1)
 		(m-2-1) edge [double,red]node [left] {$...$} (m-3-1)
 		(m-3-1) edge [double,red]node [left] {$x_{j_l}$} (m-4-1)
 		
 		(m-1-2) edge[blue] node [right] {$x_{n_1}^{*}$} (m-2-2)
 		(m-2-2) edge[dotted,blue] node [right] {$...$} (m-3-2)
 		(m-3-2) edge[dotted,blue] node [right] {} (m-4-2)
 		
 		(m-1-3) edge[dotted,blue] node [right] {} (m-2-3)
 		(m-1-4) edge[dotted,blue] node [right] {} (m-2-4)
 		(m-1-5) edge[double,blue] node [right] {} (m-2-5)

 		(m-2-2) edge[dotted,blue] node [right] {} (m-3-2)
 		(m-2-3) edge[dotted,blue] node [right] {} (m-3-3)
 		(m-2-4) edge[dotted,blue] node [right] {} (m-3-4)
 		(m-2-5) edge[double,blue] node [right] {$...$} (m-3-5)
 		
 		(m-3-2) edge[dotted,blue] node [right] {} (m-4-2)
 		(m-3-3) edge[dotted,blue] node [right] {} (m-4-3)
 		(m-3-4) edge[dotted,blue] node [right] {} (m-4-4)
 		(m-3-5) edge[double,blue] node [right] {} (m-4-5)
 		
 		(m-2-1) edge node [below] {$x_{m_1}$} (m-2-2)
 		(m-2-2) edge [] node [below] {$x^*_{m_2}$} (m-2-3)
 		(m-2-3) edge [] node [below] {$...$} (m-2-4)
 		(m-2-4) edge [] node [below] {} (m-2-5)
 		
 		(m-3-1) edge[dotted,red] node [below] {} (m-3-2)
 		(m-3-2) edge [dotted,red] node [below] {} (m-3-3)
 		(m-3-3) edge [dotted,red] node [below] {$...$} (m-3-4)
 		(m-3-4) edge [dotted,red] node [below] {} (m-3-5)
 		
 		(m-4-1) edge [double,red] node [below] {} (m-4-2)
 		(m-4-2) edge [double,red] node [below] {} (m-4-3)
 		(m-4-3) edge [double,red] node [below] {$...$} (m-4-4)
 		(m-4-4) edge [double,red] node [below] {} (m-4-5)
 		;
 		\end{tikzpicture}}
 	\caption{Computation of $g\oplus h$, with   $g=x_{i_1} x^*_{i_2}...x_{i_k}$, $h=x_{j_1}^{*}...x_{j_l}^{}$. }\label{fig-defn-oplus-mixed}
 \end{figure}
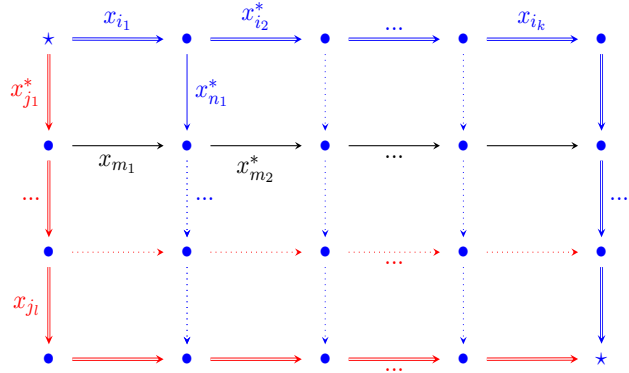	\margin_comment{\textcolor{red}{extension of reversing}}
 We begin at the left-most upper square and whenever possible we complete each square according to the unique definition of $\oplus$ for pairs of elements in $X\cup X^*$.  If each square can be closed, then $g \oplus h$ is defined and can be read from the directed path from the upper star to the lower star, going right and then down, $h \oplus g$ is also defined and it can be read from the directed path from the upper star to the lower star, going down  and then right. Clearly, $g \oplus h$  and $h \oplus g$ are equal in $\operatorname{IM}_{\rho}$.  If  the process of  reversing to define   $g\oplus h$ is not terminating, or if at any step, any square  could not be completed, then $g \oplus h$ is not defined.  
 If $g\oplus h$ exists and $g\oplus h=gu=hv$, then $g^*\oplus u$ and  $h^*\oplus v$ exist and  using the  extended version of   a reversing diagram  in which the inversion of the direction of an arrow labelled $x$ gives an arrow labelled $x^*$  (as in Figure \ref{fig-defn-oplus-mixed})  we have:
	\begin{gather}\label{eqn-welldef}
	g\oplus h=gu=hv\\
g^*\oplus u=g^*h=uv^* \label{eqn-welldef2}\\
h^*\oplus v=h^*g=vu^*
	\end{gather}\margin_comment{\textcolor{red}{proof well-defined}}
	We show  that 	 $\oplus$  is well-defined.  As $\operatorname{IM}_{\rho}$  is an inverse monoid with respect to the product, for every $g \in \operatorname{IM}_{\rho}$, $g$ and $gg^*g$ are equivalent,  so, we need to  show   that whenever $g\oplus h$ and $ gg^*g \oplus h$ exist, then $g\oplus h$ and $gg^*g \oplus h$ are also equivalent (see Figure \ref{fig-well-def}).
		\begin{figure}[h]
			\begin{tikzpicture}
		\matrix (m) [matrix of math nodes,row sep=3em,column sep=4em,minimum width=2em,color=blue, ampersand replacement=\&]
		{ \star\& \bullet\&\bullet\&\bullet	\\
		\bullet\& \bullet\&\bullet\&\star	\\};
		\path[-stealth]
		(m-1-1) edge [double,red] node [above] {$g$} (m-1-2)
		(m-1-2) edge [double,red] node [above] {$g^*$} (m-1-3)
		(m-1-3) edge [double,red] node [above] {$g$} (m-1-4)

		(m-1-1) edge [double,red]node [left] {h} (m-2-1)
		
		(m-1-2) edge[blue] node [right] {$u$} (m-2-2)
		(m-1-3) edge [blue]node [right] {$h$} (m-2-3)
		(m-1-4) edge [blue]node [right] {$u$} (m-2-4)

		(m-2-1) edge node [below] {$v$} (m-2-2)
		(m-2-2) edge   node [below] {$v^*$} (m-2-3)
		(m-2-3) edge    node [below] {$v$} (m-2-4)
		;
		\end{tikzpicture}
		\hspace{10pt}
		\begin{tikzpicture}
		\matrix (m) [matrix of math nodes,row sep=3em,column sep=4em,minimum width=2em,color=yellow]
		{ \star& \bullet\\
		\bullet& \star\\};
		\path[-stealth]
		(m-1-1) edge [double,red] node [above] {$g$} (m-1-2)
		
		(m-1-1) edge [double,red]node [left] {h} (m-2-1)
		
		(m-1-2) edge[blue] node [right] {$u$} (m-2-2)
	
		(m-2-1) edge node [below] {$v$} (m-2-2)
	
		;
		\end{tikzpicture}
		\caption{Right reversing  to compute $gg^*g\oplus h$ at left and  $g\oplus h$ at right.}\label{fig-well-def}
	\end{figure}
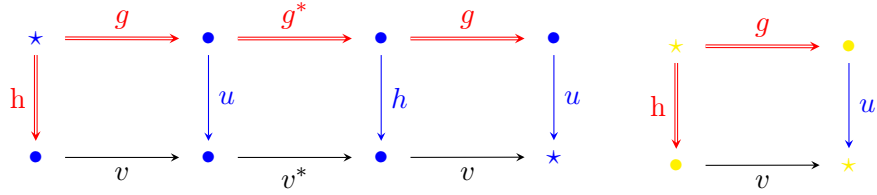\margin_comment{\textcolor{red}{well-defined for equiv elts}}

Note that if  $g\oplus h$ exists and $g\oplus h=gu=hv$, then $g^*\oplus u$ exists and  so $gg^*g \oplus h$ exists also.  From Equations \ref{eqn-welldef}, \ref{eqn-welldef2}, we  have $gg^*g \oplus h=  gg^*gu=hvv^*v$. Since  $hvv^*v$ and $hv$ are equivalent,  $gg^*g \oplus h=gu=hv=g\oplus h$ in $\operatorname{IM}_{\rho}$. In the same way, we can show that for any $g,h,w,z\in \operatorname{IM}_{\rho}$, $gzz^*ww^*\oplus h=gww^*zz^*\oplus h$.\\
 \margin_comment{\textcolor{red}{well-def for  elts $\equiv$mod $\rho$}}
 Let $g,g',h\in \operatorname{IM}_{\rho}$ and assume $g$ and $g'$ are equal in $\operatorname{IM}_{\rho}$. We show that  $g'\oplus h=g\oplus h$ in $\operatorname{IM}_{\rho}$. From the assumption,  $g'$ can be obtained from $g$ by  successive applications of  relations from $\mathcal{R}_{\rho}$,  so it is enough to show that after each single application there is equality. That  is,  we can assume  that  $g'$ is  obtained from $g$ after the application of a single relation $(\mathfrak{l}=\mathfrak{r})$  from $\mathcal{R}_{\rho}$: $g=a\mathfrak{l}b$, $g'=a\mathfrak{r}b$, $a,b,\mathfrak{l},\mathfrak{r} \in \operatorname{IM}_{\rho}$.  Assume  $h=x^{\epsilon}_{l}w$, with  $x^{\epsilon}_{l}\in X\cup X^*$, $w \in \operatorname{IM}_{\rho}$, as in Figure \ref{fig-well-def2}. Then, if we show that $(\mathfrak{l'}=\mathfrak{r'})$  belongs to  $\mathcal{R}_{\rho}$ and $x_{r'}=x_r$, then $x_{s'}=x_s$ and $b'_{2}$ is $b'_{1}$, that is $g'\oplus x^{\epsilon}_{l}=g\oplus x^{\epsilon}_{l}$ and $a'\mathfrak{l'}b'_1= a'\mathfrak{r'}b'_1$  in $\operatorname{IM}_{\rho}$. 
 	\begin{figure}[h]
 		\scalebox{0.8}[0.7]{
 	\begin{tikzpicture}
 	\matrix (m) [matrix of math nodes,row sep=3em,column sep=4em,minimum width=2em,color=blue, ampersand replacement=\&]
 	{ \star \& \bullet \&\bullet \&\bullet	\\
 		\bullet \& \bullet  \&\bullet \&\bullet	\\
 		\bullet \& \bullet \&\bullet \&\star	\\};
 	\path[-stealth]
 	(m-1-1) edge [double,red] node [above] {$a$} (m-1-2)
 	(m-1-2) edge [double,red] node [above] {$\mathfrak{l}$} (m-1-3)
 	(m-1-3) edge [double,red] node [above] {$b$} (m-1-4)

 	(m-1-1) edge [double,red]node [left] {$x^{\epsilon}_{l}$} (m-2-1)
 		(m-2-1) edge [double,red]node [left] {$w$} (m-3-1)
 	
 	(m-1-2) edge[blue] node [right] {$x^{\epsilon}_k$} (m-2-2)
 	(m-1-3) edge [blue]node [right] {$x^{\epsilon}_r$} (m-2-3)
 	(m-1-4) edge [blue]node [right] {$x^{\epsilon}_s$} (m-2-4)

 	(m-2-1) edge node [below] {$a'$} (m-2-2)
 	(m-2-2) edge   node [below] {$\mathfrak{l'}$} (m-2-3)
 	(m-2-3) edge    node [below] {$b'_{1}$} (m-2-4)
 	
 		(m-2-2) edge[dotted,blue] node [right] {} (m-3-2)
 	(m-2-3) edge [dotted,blue]node [right] {} (m-3-3)
 	(m-2-4) edge [dotted,blue]node [right] {} (m-3-4)

 	(m-3-1) edge[dotted,blue] node [below] {} (m-3-2)
 	(m-3-2) edge[dotted,blue]   node [below] {} (m-3-3)
 	(m-3-3) edge[dotted,blue]    node [below] {} (m-3-4)
 	;
 	\end{tikzpicture} }
 	\hspace{2pt}
		\scalebox{0.8}[0.7]{\begin{tikzpicture}
 \matrix (m) [matrix of math nodes,row sep=3em,column sep=4em,minimum width=2em,color=blue,ampersand replacement=\&]
 { \star \& \bullet \&\bullet \&\bullet	\\
 	\bullet \& \bullet \&\bullet \&\bullet	\\
 	\bullet \& \bullet \&\bullet \&\star	\\};
 \path[-stealth]
 (m-1-1) edge [double,red] node [above] {$a$} (m-1-2)
 (m-1-2) edge [double,red] node [above] {$\mathfrak{r}$} (m-1-3)
 (m-1-3) edge [double,red] node [above] {$b$} (m-1-4)

 (m-1-1) edge [double,red]node [left] {$x^{\epsilon}_{l}$} (m-2-1)
 (m-2-1) edge [double,red]node [left] {$w$} (m-3-1)
 
 (m-1-2) edge[blue] node [right] {$x^{\epsilon}_k$} (m-2-2)
 (m-1-3) edge [blue]node [right] {$x^{\epsilon}_{r'}$} (m-2-3)
 (m-1-4) edge [blue]node [right] {$x^{\epsilon}_{s'}$} (m-2-4)

 (m-2-1) edge node [below] {$a'$} (m-2-2)
 (m-2-2) edge   node [below] {$\mathfrak{r'}$} (m-2-3)
 (m-2-3) edge    node [below] {$b'_{2}=b'_{1}$} (m-2-4)
 
 (m-2-2) edge[dotted,blue] node [right] {} (m-3-2)
 (m-2-3) edge [dotted,blue]node [right] {} (m-3-3)
 (m-2-4) edge [dotted,blue]node [right] {} (m-3-4)

 (m-3-1) edge[dotted,blue] node [below] {} (m-3-2)
 (m-3-2) edge[dotted,blue]   node [below] {} (m-3-3)
 (m-3-3) edge[dotted,blue]    node [below] {} (m-3-4)
 ;
 \end{tikzpicture}}
 	\caption{ The iterative computation of  $g\oplus h$ at left and  $g'\oplus h$ at right, with $g=a\mathfrak{l}b$, $g'=a\mathfrak{r}b$, $h=x^{\epsilon}_{l}w$.}\label{fig-well-def2}
 \end{figure}\margin_comment{\textcolor{red}{enough to prove:}} \margin_comment{\textcolor{red}{$\mathfrak{l}\oplus x^{\epsilon}=\mathfrak{r}\oplus x^{\epsilon}$}}

Using the same argument in the completion of the diagram row by row in the same way, we then obtain $g'\oplus h=g\oplus h$ in $\operatorname{IM}_{\rho}$. We prove that  $(\mathfrak{l'}=\mathfrak{r'})$  belongs to  $\mathcal{R}_{\rho}$ and that $x_{r'}=x_r$  in a  case by case proof which relies entirely on the properties of the partial solution from Lemma \ref{lem-formules-invol+braided-partial}. The proof  is in  Appendix  $2$. So,   $\oplus$ is well-defined in $\operatorname{IM}_{\rho}$.\\
\margin_comment{\textcolor{red}{associativity}}
We show now that for every $g,h,k \in \operatorname{IM}_{\rho}$, $(g\oplus h) \oplus k= g \oplus(h \oplus k)$,  whenever they are defined. Assume  $g \oplus h=gu=hv$, $h \oplus k=hw=kz$, $v \oplus w=vk'=wv'$. So, we can compute $(g\oplus h ) \oplus k$ and $g  \oplus(h\oplus k)$ as described in Figure  \ref{fig-associative}. As one can see,  along the thick path from the upper left star 	to the down right star,  we read  in both diagrams a common element, so  $(g\oplus h ) \oplus k=g  \oplus(h\oplus k)$ in $\operatorname{IM}_{\rho}$  (whenever  defined).  So,   $(\operatorname{IM}_{\rho}, \oplus)$ is a  commutative partial  monoid, with identity element equal to $1$, the identity of  $ (\operatorname{IM}_{\rho},\cdot)$.
  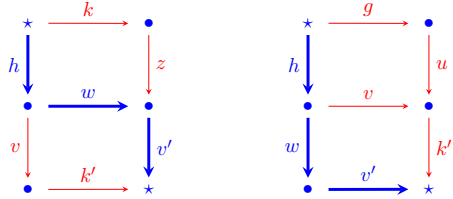
\begin{figure}[h] 	
  	\scalebox{0.7}[0.7]{
  			\begin{tikzpicture}
  			\matrix (m) [matrix of math nodes,row sep=3em,column sep=4em,minimum width=2em,color=blue,ampersand replacement=\&]
  			{ 	\star \& \bullet \\
  				\bullet\& 	\bullet\\		
  				\bullet\&   \star\\};
  			\path[-stealth]
  			(m-1-1) edge [red] node [above] {$k$} (m-1-2)
  			(m-1-2) edge [red] node [right] {$z$} (m-2-2)
  			
  				(m-1-1) edge [ultra thick,blue] node [left] {$h$} (m-2-1)
  			(m-2-1) edge [ultra thick,blue] node [above] {$w$} (m-2-2)
  			(m-2-1) edge [red] node [left] {$v$} (m-3-1)
  			(m-2-2) edge [ultra thick,blue] node [right] {$v'$} (m-3-2)
  				(m-3-1) edge [red] node [above] {$k'$} (m-3-2)
  			;
  			\end{tikzpicture}}	\hspace{30pt} \scalebox{0.7}[0.7]{\begin{tikzpicture}
  		\matrix (m) [matrix of math nodes,row sep=3em,column sep=4em,minimum width=2em,color=blue, ampersand replacement=\&]
  		{ 	\star \& \bullet \\
  		\bullet\& 	\bullet\\		
  		\bullet\&   \star\\};
  	\path[-stealth]
  	(m-1-1) edge [red] node [above] {$g$} (m-1-2)
  	(m-1-2) edge [red] node [right] {$u$} (m-2-2)
  	
  	(m-1-1) edge [ultra thick,blue] node [left] {$h$} (m-2-1)
  	(m-2-1) edge[red]  node [above] {$v$} (m-2-2)
  	(m-2-1) edge [ultra thick,blue]node [left] {$w$} (m-3-1)
  	(m-2-2) edge [red] node [right] {$k'$} (m-3-2)
  	(m-3-1) edge [ultra thick,blue]node [above] {$v'$} (m-3-2)
  	;
  		\end{tikzpicture}}
  	\caption{Computation of  $(g\oplus h ) \oplus k$ at left  and $g  \oplus(h\oplus k)$ at right }	\label{fig-associative}	
  \end{figure}  		
\margin_comment{\textcolor{red}{left-distributive}}
 	It remains to show that Equation \ref{defn-dist} holds, that is $a (g\oplus h)=ag\oplus ah$, for every $a,g,h \in \operatorname{IM}_{\rho}$, such  that this expression is defined. From $g\oplus h=gu=hv$, multiplying at left by $a$, we have $a (g\oplus h)=agu=ahv$, and using the reversing diagram for $ag\oplus ah$, and Equations \ref{eq-plus1}, \ref{eq-plusss}, we obtain $ag\oplus ah=agu=ahv$, that is $a (g\oplus h)=ag\oplus ah$. So, $(\operatorname{IM}_{\rho}, \cdot,\oplus)$ is a partial  left brace.	
\end{proof}
\begin{rem}\margin_comment{\textcolor{red}{added rmk+diagram}}
	Note that in the proof of Theorem \ref{thm-2-precise},  the operation $\oplus$ defined satisfies  a monoidal version of  the left-distributivity-like axiom in a left brace as described in Equation  \ref{eqn-brace},  that is  $a \oplus \,a\cdot(g\oplus h)  =a\cdot g \oplus a\cdot h$, for every $a,g,h \in \operatorname{IM}_\rho$ (see Figure \ref{fig-last}).
	\begin{figure}[H] 
		\scalebox{0.7}[0.7]{
		\begin{tikzpicture}
		\matrix (m) [matrix of math nodes,row sep=3em,column sep=4em,minimum width=2em,color=blue, ampersand replacement=\&]
		{ \star\& \bullet\\
			\bullet\& \star\\};
		\path[-stealth]
		(m-1-1) edge [double,red] node [above] {$g$} (m-1-2)
		
		(m-1-1) edge [double,red]node [left] {$h$} (m-2-1)
		
		(m-1-2) edge[blue] node [right] {$u$} (m-2-2)
		
		(m-2-1) edge node [below] {$v$} (m-2-2)
		;
		\end{tikzpicture}}
		\hspace{10pt}
		\scalebox{0.7}[0.7]{	\begin{tikzpicture}
		\matrix (m) [matrix of math nodes,row sep=3em,column sep=4em,minimum width=2em,color=blue, ampersand replacement=\&]
		{ 	\star\& \bullet \& \bullet \\
			\bullet\& \bullet \& \bullet \\
			\bullet\& \bullet \& \star\\};
		\path[-stealth]
		(m-1-1) edge [double,red] node [above] {$a$} (m-1-2)
		(m-1-2) edge [double,red] node [above] {$g$} (m-1-3)

		(m-1-1) edge [double, red]node [left] {$a$} (m-2-1)
		(m-2-1) edge [double, red]node [left] {$h$} (m-3-1)
		
		(m-1-2) edge [blue] node [left] {$1$} (m-2-2)
		(m-2-1) edge [] node [above] {$1$} (m-2-2)
		
		(m-2-2) edge [] node [above] {$g$} (m-2-3)
		(m-1-3) edge [blue] node [right] {$1$} (m-2-3)
		(m-2-2) edge [blue] node [left] {$h$} (m-3-2)
		(m-3-1) edge [] node [below] {$1$} (m-3-2)
		
		(m-2-3) edge [blue] node [right] {$u$} (m-3-3)
		(m-3-2) edge [] node [below] {$v$} (m-3-3)
		;
		\end{tikzpicture} }
	\hspace{10pt}
	\scalebox{0.7}[0.7]{	\begin{tikzpicture}
		\matrix (m) [matrix of math nodes,row sep=3em,column sep=4em,minimum width=2em,color=blue, ampersand replacement=\&]
		{ 	\star\& \bullet \& \bullet  \& \bullet \\
			\bullet\& \bullet \& \bullet \&\star\\};
		\path[-stealth]
		(m-1-1) edge [double,red] node [above] {$a$} (m-1-2)
		(m-1-2) edge [double,red] node [above] {$g$} (m-1-3)
			(m-1-3) edge [double,red] node [above] {$u$} (m-1-4)
		
		(m-1-1) edge [double, red]node [left] {$a$} (m-2-1)
	
		(m-1-2) edge [blue] node [left] {$1$} (m-2-2)
		(m-2-1) edge [] node [below] {$1$} (m-2-2)
		
		(m-2-2) edge [] node [below] {$g$} (m-2-3)
		(m-1-3) edge [blue] node [right] {$1$} (m-2-3)
		(m-1-4) edge [blue] node [right] {$1$} (m-2-4)

		(m-2-3) edge [blue] node [below] {$u$} (m-2-4)
	
		;
		\end{tikzpicture} }
		\caption{Computation of  $g \oplus h$,  $ ag\oplus ah$, and $a \oplus a(g \oplus h)$ from left to right}
		\label{fig-last}
	\end{figure}
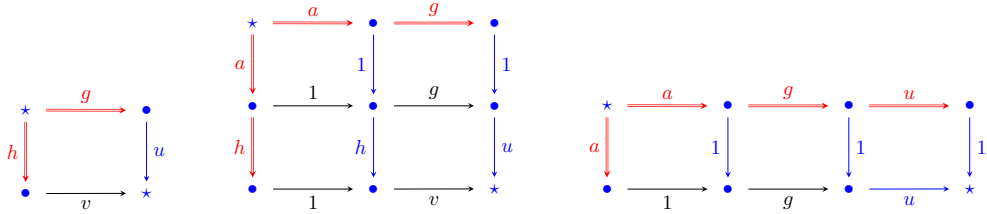 
\end{rem}
Note that the same process could be done  with left-reversing instead of right-reversing, and in this case  $(\operatorname{IM}_{\rho}, \cdot,\oplus)$ would  have been  a  partial  right  brace.
Although the structure of  $(\operatorname{IM}_{\rho}, \oplus)$  is reminiscent of that of  a category, it is not one. Indeed, in a category,  a strong associativity axiom is satisfied: if $a \oplus b$ and $b \oplus c$ are defined, then $a \oplus c$  is also defined, which  is not  satisfied in $(\operatorname{IM}_{\rho}, \oplus)$. 

Given a partial left brace $\mathcal{B}$, it is not clear how to define  a  non-degenerate involutive  partial solution  associated to  $\mathcal{B}$. If we try, like in the case of the classical brace,   to define a homomorphism
 $\lambda:  (\mathcal{B},\cdot)\rightarrow \operatorname{Aut}(\mathcal{B},\oplus) $ such that  $r(x,y)=(\lambda_x(y), \,\lambda^{-1}_{\lambda_x(y)}(x))$ is a   non-degenerate involutive  partial set-theoretic solution, we encounter many difficulties. Indeed, as $(\mathcal{B},\cdot)$ is an inverse monoid and not a group, there is no left or right cancellation rules, and the maps $\lambda_x$  are not necessarily injective.
 \section{The Thompson group $F$ as the structure group of a  partial set-theoretical solution of the  QYBE}
 \subsection{Proof of Theorem $3$}
 We consider the infinite presentation of the group $F$ from Theorem \ref{theo-F-infinite-pres} and show that $F$ is the structure group of a square-free, non-degenerate, involutive  partial set-theoretic solution.
 We recall that $F= \operatorname{Gp}\langle x_0,x_1,x_2,... \mid x_nx_k=x_kx_{n+1},\, 0 \leq k<n\rangle$.  Let $X=\{x_0,x_1,x_2, ...\}$. For brevity of notation, we often denote  these elements by $\{0,1,2, ...\}$. Let $\sigma_n:X\rightarrow X$ and $\gamma_n:X\rightarrow X$ be the following partial functions.

 \begin{equation}\label{defn-functions-for F}
 \sigma_n(k) = \left\{
 \begin{array}{lll}
 
 k              && k \leq n\\
 \text{not defined}     && k=n+1\\
 k-1&&  k \geq n+2\\
 \end{array} 
 \right.  \hspace{3cm}
 \gamma_n(k) = \left\{
 \begin{array}{lll}
 k              && k \leq n-2\\
 \text{not defined}             && k=n-1\\
 n && k=n\\
 k+1&&  k \geq n+1\\
 \end{array} 
 \right. 
 \end{equation}
 Let  define the following subsets of $X$:  $\mathcal{D}_{\sigma_n}=X \setminus\{x_{n+1}\}$,  $\mathcal{R}_{\sigma_n}=X $, $\mathcal{D}_{\gamma_n}=X \setminus\{x_{n-1}\}$  and 
 $\mathcal{R}_{\gamma_n}=X \setminus\{x_{n-1},x_{n+1}\}$. So, $\sigma_n: \mathcal{D}_{\sigma_n} \rightarrow\mathcal{R}_{\sigma_n}$, and $\gamma_n:\mathcal{D}_{\gamma_n} \rightarrow \mathcal{R}_{\gamma_n}$ are functions.

  Let define the following partial function:
   \begin{equation}\label{defn-S-for F}
\begin{array}{ll}
& r: X \times X \rightarrow X \times X\\
 & r(x_i,x_j)=(x_{\sigma_i(j)}\,,\,x_{\gamma_j(i)})
  \end{array}
\end{equation}
We denote by $\mathcal{D} \subset X \times X$ and $\mathcal{R}\subset X \times X$  the domain 
and  range of $r$ respectively. We show that the pair $(X,r)$ just defined is a square-free, non-degenerate, involutive  partial set-theoretic solution.

 \begin{lem}\label{lem-thompson-partial-sol}
Let  $r: \mathcal{D} \rightarrow \mathcal{R}$  as defined in Eq. \ref{defn-S-for F}. Then $(X,r)$  satisfies the following properties:
\begin{enumerate}[(i)]
	\item $ \mathcal{D}=\mathcal{R}=X^2\setminus\{(x_n,x_{n+1})\mid n\geq 0\}$.
	\item $r: \mathcal{D} \rightarrow \mathcal{R}$  is  bijective.
	\item $r(x_i,x_i)=(x_i,x_i)$, for all pairs  $(x_i,x_i)\in X^2$. 
	\item for every  $x \in X$, $\sigma_{x}:\mathcal{D}_{\sigma_{x}}\rightarrow \mathcal{R}_{\sigma_{x}}$ and $\gamma_{y}:\mathcal{D}_{\gamma_{y}}\rightarrow \mathcal{R}_{\gamma_{y}}$ are bijective.
	\item $r^{2}(x_i,x_j)=(x_i,x_j)$, for all pairs  $(x_i,x_j)\in X^2$ such that $r^{2}(x_i,x_j)$  is defined. 
	\item $r^{12}r^{23}r^{12}(x_i,x_j,x_k)=r^{23}r^{12}r^{23}(x_i,x_j,x_k)$,  for all triples $(x_i,x_j,x_k) \in X^3$ such that  both $r^{12}r^{23}r^{12}(x_i,x_j,x_k)$ and $r^{23}r^{12}r^{23}(x_i,x_j,x_k)$ are defined.

\end{enumerate}
 That is, $(X,r)$  is a 
 a non-degenerate,  square-free, involutive  partial set-theoretic solution. 
\end{lem}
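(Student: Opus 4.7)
The plan is to verify the six properties in order, starting from the purely combinatorial ones and building up to the braided condition.

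For parts (i) and (iv), I would read the domains directly off the definitions in \eqref{defn-functions-for F}: a pair $(x_i,x_j)$ lies in $\mathcal{D}$ precisely when $j\in\mathcal{D}_{\sigma_i}$ and $i\in\mathcal{D}_{\gamma_j}$, which both amount to the single exclusion $j\neq i+1$. Bijectivity of $\sigma_n\colon X\setminus\{n{+}1\}\to X$ and $\gamma_n\colon X\setminus\{n{-}1\}\to X\setminus\{n{-}1,n{+}1\}$ is immediate from the piecewise formulas: each map is the identity on a lower initial segment and an integer shift on an upper tail, and these two pieces have disjoint images. For square-freeness (iii), $\sigma_i(i)=i$ and $\gamma_i(i)=i$ hold by direct inspection.

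For (v) I would first collapse $r$ to the compact form
\[
r(x_i,x_j)\;=\;\begin{cases}(x_{j-1},x_i)& i<j\\ (x_i,x_i)& i=j\\ (x_j,x_{i+1})& i>j\end{cases}
\]
obtained by splitting into the three possible orderings (the case $i<j$ forces $j\ge i+2$ since $(x_i,x_{i+1})\notin\mathcal{D}$). A second application of $r$ is then a one-line check in each case: in the first case $(x_{j-1},x_i)$ has first coordinate strictly larger than the second, so the third branch returns $(x_i,x_{(j-1)+1})=(x_i,x_j)$, and the case $i>j$ is symmetric. Parts (ii) and the identity $\mathcal{R}=\mathcal{D}$ then drop out of involutivity.

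The real work is (vi). My plan is to invoke Lemma \ref{lem-formules-invol+braided-partial}(iii) and verify the three equations $\sigma_i\sigma_j=\sigma_{\sigma_i(j)}\sigma_{\gamma_j(i)}$, $\gamma_k\gamma_j=\gamma_{\gamma_k(j)}\gamma_{\sigma_j(k)}$, and the mixed identity, wherever both sides are defined. Using the compact form of $r$ above, each equation splits according to the relative order of the three indices (six total orderings, together with degenerate coincidences). In each sub-case the ``identity on the small indices, shift by $\pm 1$ on the large indices'' behavior of $\sigma_n$ and $\gamma_n$ reduces both sides of the equation to the same ``shift the maximum (or minimum) index by $\pm 1$'' expression. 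As a sanity check, the relations $x_ix_j=x_{\sigma_i(j)}x_{\gamma_j(i)}$ coincide with (both orientations of) the defining relations $x_nx_k=x_kx_{n+1}$, $k<n$, so the structure monoid will be consistent; but this does not itself give the braided condition and the case analysis must be carried out.

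The main obstacle is the sub-case explosion in (vi), where one has to verify not only the six strict orderings of $i,j,k$ but also the boundary configurations coming from the excluded pairs $(x_n,x_{n+1})$ that ensure both sides of the YBE are simultaneously defined. The cleanest route is to handle the two extreme cases $i<j<k$ and $i>j>k$ first, where both sides produce the ``uniform shift'' triples $(x_{k-1},x_{j-1},x_i)$ and $(x_j,x_k,x_{i+2})$ respectively, and then to dispatch the four remaining orderings by the same bookkeeping.
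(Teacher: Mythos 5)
Your route is essentially the paper's: parts (i)--(iv) are read off the definitions, $r$ is collapsed to the three-branch form $(x_{j-1},x_i)$ / $(x_i,x_i)$ / $(x_j,x_{i+1})$, involutivity (v) is a one-line check in each branch, and (vi) is reduced to the three identities of Lemma \ref{lem-formules-invol+braided-partial} verified case by case according to the relative order of the indices -- this is exactly what the paper does (it tabulates the $\sigma$-identity and the mixed identity over the orderings of $m,n,k$, with the boundary exclusions $k\neq n+1$, $k\neq m+1,m+2$, and leaves the remaining identity as analogous). Your caveat that agreement of the relations $x_ix_j=x_{\sigma_i(j)}x_{\gamma_j(i)}$ with the Thompson presentation does not by itself give braidedness is also correct.

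The one concrete computation you offer for (vi), however, is miscomputed. For $i<j<k$ (with the gaps needed for all applications to be defined) both sides of the Yang--Baxter identity equal $(x_{k-2},x_{j-1},x_i)$, not $(x_{k-1},x_{j-1},x_i)$: the largest index is hit by two $\sigma$'s and drops by $2$, consistent with $x_ix_jx_k=x_{j-1}x_ix_k=x_{j-1}x_{k-1}x_i=x_{k-2}x_{j-1}x_i$ in the monoid. Similarly, for $i>j>k$ both sides equal $(x_k,x_{j+1},x_{i+2})$, not $(x_j,x_k,x_{i+2})$; your triple is what one has after only two of the three applications of $r^{12},r^{23}$. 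These are slips rather than a flaw in the method -- carried out correctly, the two sides do agree in every case, as the paper's tables show -- but as written your ``sanity values'' would make the two extreme cases appear to fail, so they must be corrected before the full case analysis (including the boundary configurations you rightly flag) is written out.
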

\begin{proof}
$(i)$, $(ii)$, $(iii)$  hold from the definition of $r$, $r(x_i,x_j) = \left\{
\begin{array}{lll}
	
(x_{j-1},x_i)              && i \leq j-2\\
	\text{not defined}     && i=j-1\\
	(x_j,x_i) &&  i=j\\
		(x_j,x_{i+1}) &&  i\geq j+1\
\end{array} 
\right. $\\
$(iv)$  Clearly, from their definition, $\sigma_i$ and $\gamma_i$ are partial bijections of $X$, for every $i \geq 0$.
$(v)$  $r^2(x_i,x_j) = r(\left\{
\begin{array}{lll}

(x_{j-1},x_i)              && i \leq j-2\\
\text{not defined}     && i=j-1\\
(x_j,x_i) &&  i=j\\
(x_j,x_{i+1}) &&  i\geq j+1\
\end{array} 
\right.=
 \left\{
 \begin{array}{lll}
 
 (x_{i},x_j)              && i \leq j-2\\
 \text{not defined}     && i=j-1\\
 (x_i,x_j) &&  i=j\\
 (x_i,x_{j}) &&  i\geq j+1\
\end{array} 
\right.$\\

$(vi)$ is a case by case proof  and it appears in the appendix.
\end{proof} 

 We denote by  $\mathcal{F}$ the non-degenerate involutive partial  set-theoretic solution defined above.  We prove some properties of  $\mathcal{F}$.
 
 \begin{thm}\label{theo-F-structure-gp}
 	Let  $r: \mathcal{D} \rightarrow \mathcal{R}$  as defined in Eq. \ref{defn-S-for F}. Then
 	\begin{enumerate}[(i)]
 		\item $G(X,r)$, the structure group of $\mathcal{F}$,  is isomorphic to the Thompson group $F$.
 		\item $\operatorname{IM}(X,r)$, the structure inverse monoid  of $\mathcal{F}$,   embeds into the inverse  monoid $A \Join \operatorname{I}_X$, where $A$ is the  commutative  inverse monoid $\{f: \mathcal{D}_f\rightarrow \mathbb{Z} \mid \mathcal{D}_f \subseteq X\}$, with pointwise operation, and $ \operatorname{I}_X$ is the inverse symmetric monoid.
 	\end{enumerate}   
 \end{thm}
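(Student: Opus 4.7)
The plan is to prove parts (i) and (ii) separately, with (i) reducing to a direct comparison of presentations and (ii) being a straight application of Theorem \ref{theo_psi-hom}.

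For part (i), I would start from the defining relations of $G(X,r)=\operatorname{Gp}\langle X\mid x_ix_j=x_{\sigma_i(j)}x_{\gamma_j(i)},\ (x_i,x_j)\in\mathcal{D}\rangle$ and unpack $r$ case-by-case using the explicit formula recorded in the proof of Lemma \ref{lem-thompson-partial-sol}$(i)$: namely $r(x_i,x_j)=(x_{j-1},x_i)$ when $i\leq j-2$, $r(x_i,x_j)=(x_i,x_i)$ when $i=j$ (a trivial relation), and $r(x_i,x_j)=(x_j,x_{i+1})$ when $i\geq j+1$; the pair $(x_{j-1},x_j)$ is excluded from $\mathcal{D}$ and produces no relation. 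The case $i\geq j+1$ yields $x_ix_j=x_jx_{i+1}$, which upon setting $n=i$ and $k=j$ (so $k<n$) is exactly Thompson's relation $x_nx_k=x_kx_{n+1}$. The case $i\leq j-2$ yields $x_ix_j=x_{j-1}x_i$, which upon setting $n=j-1$ and $k=i$ (so $k<n$) is the same Thompson relation rewritten as $x_kx_{n+1}=x_nx_k$. Hence the two presentations coincide, and by Theorem \ref{theo-F-infinite-pres}$(i)$, $G(X,r)\cong F$.

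For part (ii), I would invoke Lemma \ref{lem-thompson-partial-sol}, which already certifies that $\mathcal{F}$ is a square-free, non-degenerate, involutive partial set-theoretic solution. In addition, the hypothesis used throughout Section~4 that $x\in\mathcal{R}_{\sigma_x}$ for every $x\in X$ is automatic from square-freeness (since $\sigma_x(x)=x$). All preconditions of Theorem \ref{theo_psi-hom} are therefore satisfied, and the theorem provides the required injective homomorphism of monoids $\psi:\operatorname{IM}(X,r)\hookrightarrow A\Join\operatorname{I}_X$, where $A$ is the commutative inverse monoid of partial functions $X\to\mathbb{Z}$ with finite support (pointwise addition) and $\operatorname{I}_X$ is the symmetric inverse monoid on $X$.

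The only step requiring any care is the case analysis in part (i): one must check that the excluded pairs $(x_i,x_{i-1})\notin\mathcal{D}$ (namely $i=j-1$) do not contribute any missing Thompson relation, but this is immediate since for each pair $(k,n)$ with $k<n$ the relation $x_nx_k=x_kx_{n+1}$ is already produced by the case $i=n,\ j=k$. Everything else is essentially bookkeeping, and there is no genuine obstacle: once the explicit form of $r$ is in hand, part (i) is a presentation match and part (ii) is a citation.
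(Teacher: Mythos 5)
Your proposal is correct and follows essentially the same route as the paper: part (i) is the same case-by-case unpacking of $r$ (using the explicit description from Lemma \ref{lem-thompson-partial-sol}) to match the defining relations with the infinite Thompson presentation of Theorem \ref{theo-F-infinite-pres}, and part (ii) is, as in the paper, a direct application of Theorem \ref{theo_psi-hom}. Your extra remarks — that the excluded pairs $(x_i,x_{i+1})$ contribute no missing relation and that square-freeness guarantees $x\in\mathcal{R}_{\sigma_x}$ — are accurate and only make explicit what the paper leaves implicit.
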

 \begin{proof}
 $(i)$ Fom its definition, $G(X,r)=\operatorname{Gp}\langle x_0,x_1,x_2...\mid x_ix_j=x_{\sigma_i(j)}\,x_{\gamma_j(i)}; \; i,j\geq 0\rangle $.  That is, using the description of $r$ in the proof of Lemma \ref{lem-thompson-partial-sol}, the defining relations of  $G(X,r)$ have the following form:
$\left\{	\begin{array}{cc}
 	x_ix_j= x_{j-1}x_i  &   i \leq j-1\\
 	x_ix_j=x_jx_{i+1}   &  i\geq j+2\\
 	\end{array}\right.$\\
 	There are trivial relations of the form $x_ix_i=x_ix_i$, for every $i \geq 0$,   and there are no relations of the form $x_ix_{i+1}=...$. Rewriting the above relations in a unified form,  we have	$G(X,r)=\operatorname{Gp}\langle x_0,x_1,x_2...\mid x_nx_k=x_{k}\,x_{n+1}; \;0 \leq  k<n \rangle $, the infinite presentation of the Thompson group $F$ (from  Theorem \ref{theo-F-infinite-pres}), that is $G(X,r)$ is isomorphic to   $F$.\\
 	$(ii)$ results directly from Theorem \ref{theo_psi-hom}.
 	\end{proof}

 \begin{lem}\label{lem-f-decomposable}
 	Let $\mathcal{F}$ as defined above, with  $X=\{x_0,x_1,..\}$. Then  
 	\begin{enumerate}[(i)]
 		\item 	$\mathcal{F}$  is irretractable.
 			\item 	$\mathcal{F}$  is decomposable.
 	\end{enumerate}
 
 \end{lem}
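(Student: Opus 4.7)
The plan is to verify both properties directly from the explicit formulas for $\sigma_n$ and $\gamma_n$ given in \eqref{defn-functions-for F} and from the case analysis of $r$ carried out in the proof of Lemma \ref{lem-thompson-partial-sol}. The key observation that makes the argument essentially routine is that the domains $\mathcal{D}_{\sigma_n}$ already distinguish the generators $x_n$, and that removing $x_0$ from $X$ yields a subset closed under $r$.

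For part (i), I will show that the retract relation $\sim$ is the identity on $X$. By definition, $x_m \sim x_n$ requires in particular $\mathcal{D}_{\sigma_m}=\mathcal{D}_{\sigma_n}$; but $\mathcal{D}_{\sigma_n}=X\setminus\{x_{n+1}\}$, so this equality forces $m=n$. Hence $Ret(\mathcal{F})\simeq\mathcal{F}$, the iteration $Ret^k(\mathcal{F})$ never collapses to a single point, and $\mathcal{F}$ is irretractable.

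For part (ii), I will exhibit the decomposition $X=Y_1\sqcup Y_2$ with $Y_1=\{x_0\}$ and $Y_2=\{x_n:n\geq 1\}$, and check that each block is a non-degenerate invariant subset. Invariance of $Y_1$ is immediate from the square-free identity $r(x_0,x_0)=(x_0,x_0)$. Invariance of $Y_2$ is a short case check using the case-by-case formula for $r$ recorded in the proof of Lemma \ref{lem-thompson-partial-sol}: in the three subcases $i\leq j-2$, $i=j$, and $i\geq j+1$ the indices $j-1$, $i$, and $i+1$ appearing in $r(x_i,x_j)$ all remain $\geq 1$ as soon as $i,j\geq 1$, so $r(Y_2\times Y_2)\subseteq Y_2\times Y_2$ whenever defined.

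The only (mild) obstacle is to confirm that $(Y_2,r|_{Y_2\times Y_2})$ is itself non-degenerate, i.e.\ that for every $i\geq 1$ the restrictions $\sigma_i|_{Y_2}$ and $\gamma_i|_{Y_2}$ are partial bijections of $Y_2$. A direct reading of \eqref{defn-functions-for F} shows that $\sigma_i|_{Y_2}$ has domain $Y_2\setminus\{x_{i+1}\}$ and image all of $Y_2$, and that $\gamma_i|_{Y_2}$ has domain $Y_2\setminus\{x_{i-1}\}$ (interpreted as $Y_2$ when $i=1$) and image $Y_2\setminus\{x_{i+1}\}$ (with $x_{i-1}$ also removed when $i\geq 2$), both bijectively onto their restricted ranges. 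Involutivity, the braid identity, and square-freeness then descend from $\mathcal{F}$ to $(Y_2,r|_{Y_2\times Y_2})$ by restriction, yielding the desired non-degenerate decomposition and completing the plan.
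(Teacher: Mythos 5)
Your proposal is correct and follows essentially the same route as the paper: irretractability from the fact that the maps $\sigma_n$ (in particular their domains $X\setminus\{x_{n+1}\}$) are pairwise distinct, and decomposability via the invariant blocks $\{x_0\}$ and $\{x_n : n\geq 1\}$, with the same verification that the restricted $\sigma_i,\gamma_i$ remain partial bijections of the second block. The paper only adds the (unneeded for the statement) remark that this decomposition can be iterated, peeling off $x_1$, $x_2$, etc.
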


 \begin{proof}
 	$(i)$ results directly from the definition of the functions $\{\sigma_x\mid x\in X\}$.\\
 $(ii)$	Let  $X=\{x_0\}\cup \,Y_1$, where $Y_1=\{x_1,x_2,...\}$. 	We show that $\{x_0\}$ and $ Y_1=\{x_1,x_2,...\}$ are non-degenerate invariant subsets of $X$. Clearly, $r(x_0,x_0)=(x_0,x_0)$, and  furthermore,  $r(Y_1,Y_1)\subseteq (Y_1,Y_1)$, since for $0<k<n$,  $r(x_n,x_k) = (x_k,x_{n+1})$. For $n\geq 1$,  the partial bijections of $X$, written in the form of (infinite) permutations,    $\sigma_n=(0)(1)...(n)(...,n+3,n+2)$ and  $\gamma_n=(0)(1)...(n-2)(n)(n+1,n+2,...)$ restricted to $Y_1$ are  partial bijections of $Y_1$. That is, the restriction of $r$  to $Y_1^2$ is a non-degenerate partial solution. This solution itself is  decomposable. Indeed, $Y_1=\{x_1\}\cup \, Y_2$, where $Y_2=\{x_2,x_3,...\}$,   and  $\{x_1\}$ and $ Y_2=\{x_2,...\}$ are non-degenerate invariant subsets of $Y_1$. This process can be iterated infinitely.
 \end{proof}
\begin{rem}
	In the proof of Lemma \ref{lem-f-decomposable}, we have  that $\{x_0\}$ and $ Y_1=\{x_1,x_2,...\}$ are non-degenerate invariant subsets of $X$. Let  $G_0=\operatorname{Gp}\langle x_0\mid\; \rangle$ and  $G_1=G(Y_1,r\mid_{Y_1^2})$,  the structure group of  $(Y_1,r\mid_{Y_1^2})$. For a classical set-theoretic  solution, if there is an action of  $G_0$ on  $G_1$ by conjugation,   we have $G(X,r)\simeq G_1 \rtimes G_0$, and iteratively  $G(X,r)\simeq (\,(\,(G_n \rtimes \langle x_{n-1}\rangle)\rtimes \langle x_{n-2}\rangle\,)\rtimes ...\rtimes \langle x_1\rangle)\rtimes\langle x_0\rangle$.
	In the case of  the partial solution $\mathcal{F}$, as  we have $x_0x_nx_0^{-1}=x_{n-1}$, only for every $n\geq 2$ (i.e no relation $x_0x_1x_0^{-1}=...$), the  action of $G_0$ on  $G_1$ by conjugation is not total. It is possible to consider the inverse monoids, $M_0=\operatorname{Inv}\langle x_0\mid -\rangle$ and  $M_1=\operatorname{IM}(Y_1,r\mid_{Y_1^2})$, and  the   action of $M_0$ on $M_1$ by partial permutations, but nevertheless it is not clear which kind of decomposition is obtained.
\end{rem} 
 We now turn to the study of the existence of a cycle set corresponding to $\mathcal{F}$. 
 \begin{thm}\label{theo-cycle-F}
Let $\mathcal{F}$ as defined above, with  $X=\{x_0,x_1,..\}$. 
Then  $\mathcal{F}$  induces  a non-degenerate and square-free cycle set $(X,\star)$.
\end{thm}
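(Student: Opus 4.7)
The plan is to take the standard cycle-set operation associated to a non-degenerate involutive solution, namely $x_i \star x_j := \sigma_{x_i}^{-1}(x_j)$, and verify it works in the partial setting for $\mathcal{F}$. Because $\mathcal{R}_{\sigma_n} = X$ for every $n$ by Equation \eqref{defn-functions-for F}, each $\sigma_{x_i}^{-1}$ is defined on all of $X$, so $\star$ is a total binary operation. A direct inversion of Equation \eqref{defn-functions-for F} gives the explicit formula
\[ x_i \star x_j \;=\; \begin{cases} x_j & \text{if } j \leq i, \\ x_{j+1} & \text{if } j \geq i+1. \end{cases} \]
From this, square-freeness $x_i \star x_i = x_i$ is immediate, and consequently $x \mapsto x \star x$ is the identity on $X$ and so bijective, giving non-degeneracy.

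The map $\tau(x_i) = \sigma_{x_i}^{-1}$ is injective $X \to X$ with image $X \setminus \{x_{i+1}\}$, and is therefore invertible as a partial bijection of $X$ with inverse $\sigma_{x_i}$ — the appropriate notion of invertibility in the partial setting, mirroring the fact that the $\sigma_x$'s themselves are only partial bijections of $X$ in Definition \ref{defn-partial-solution}.

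The remaining task is the cycle-set identity
\[ (x \star y) \star (x \star z) \;=\; (y \star x) \star (y \star z) \]
for all $x, y, z \in X$. Conceptually this is the inverted/relabelled form of the first braidedness equation $\sigma_x \sigma_y = \sigma_{\sigma_x(y)} \sigma_{\gamma_y(x)}$ from Lemma \ref{lem-formules-invol+braided-partial}(iii), and Lemma \ref{lem-thompson-partial-sol}(vi) already establishes braidedness of $\mathcal{F}$; so the identity is expected, but in the partial setting one also needs both sides to be defined, which here is automatic because every $\sigma_n^{-1}$ is total.

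The main obstacle is the bookkeeping of the case analysis on the relative ordering of the indices $i,j,k$ of $x,y,z$. Using the piecewise shape of $\sigma_n^{-1}$ (the identity on $\{0,\ldots,n\}$ and the shift by $+1$ on $\{n+1,n+2,\ldots\}$), a direct computation shows that in every case both sides reduce to $x_{k+c}$, where $c \in \{0,1,2\}$ is the number of indices in $\{i,j\}$ that lie strictly below $k$. This count is manifestly symmetric in $i$ and $j$, so the identity follows, and each individual case reduces to a one-line check. Once the (five or so) cases produced by the relative orderings of $i,j,k$ are organised, the verification is routine.
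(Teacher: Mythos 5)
Your proposal is correct and follows essentially the same route as the paper: define $x_i \star x_j = x_{\sigma_i^{-1}(j)}$, note that each $\sigma_i^{-1}$ is totally defined with image $X\setminus\{x_{i+1}\}$ (so $\tau(x_i)$ is an invertible partial bijection of $X$), obtain square-freeness and non-degeneracy from $x_i\star x_i=x_i$, and verify the cycle-set identity by a case analysis on the relative order of the indices. Your unified observation that both sides equal $x_{k+c}$ with $c=[i<k]+[j<k]$ (counting $i$ and $j$ with multiplicity, which is harmless since the identity is trivial when $i=j$) is just a cleaner bookkeeping of the same case-by-case computation that the paper writes out in full.
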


\subsection{Comparison between solutions, partial solutions  and  $\mathcal{F}$}
A question  that arises naturally is which properties do the structure groups of  set-theoretic solutions and of partial set-theoretic solutions  share in common. In particular, as set-theoretic  solutions and their structure groups have been intensively studied, it would be interesting to investigate which properties  satisfied by  the  structure group of a set-theoretic solution is also satisfied by the structure group of  a partial solution. The structure inverse monoid has not been defined for  set-theoretic solutions, so  it would be interesting to understand whether its definition would provide some interesting information on the set-theoretic  solutions, and on the comparison between properties of set-theoretic   solutions and partial solutions.  As  Thompson's group $F$ is the structure group of  a square-free partial set-theoretic  solution,  we can examine  some of its  properties in comparison with those of structure groups of set-theoretic  solutions.  It is not clear if  we can generalize these observations, since on one hand   $F$ is the structure group of  $(X,r)$, with $(X,r)$  partial and also $X$ infinite, and on the other hand  structure groups of solutions form an infinite family of groups. Nevertheless, it is interesting to see the differences and we present some points below.
 
 \begin{itemize}
 	\item  The structure monoid of a solution  $(X,r)$, with $X$ finite, is  Garside, and its group of fractions is a Garside group \cite{chou_art}.  Garside groups are torsion-free and biautomatic \cite{deh_torsion,dehornoy,garside1}.  Thompson's group $F$  is also torsion-free \cite{thompson}, but it is not known wether it is automatic.

 \item    The structure group of a solution  $(X,r)$, with $X$ finite, is solvable \cite{etingof}. Thompson's group $F$   commutator subgroup $F'$ is simple, so $F''=F'$, and $F$ is not nilpotent, nor solvable \cite{canon}, \cite{brown-geo}, \cite{belk}.
 
 \item The centre  of the structure group of an indecomposable solution $(X,r)$, with $X$ finite,  is cyclic \cite{chou_art}. The center of $F$ is trivial \cite{canon}.
 
 \item The structure group of a solution  $(X,r)$, with $X=n$,  is a Bieberbach group,  as it  acts on the Euclidean $n$-dimensional space \cite{gateva_van}. As far as we know, there is no result of  this kind  for $F$.
 
 \item The quotient group $F/F'$ is isomorphic to $\mathbb{Z}^2$, and so any proper quotient of $F$ is abelian \cite{brown-geo}, \cite{belk}. This is not necessarily the case for the structure group of a solution.  Indeed, we describe the structure group $G(X,r)$  of   a  square-free  solution  $(X,r)$, with $X$ infinite, and with a  quotient group   not abelian. Let $G=\operatorname{Gp}\langle x_0,x_1,x_2\mid
x_0x_1=x_1x_0\,;\, x_2x_1=x_0x_2\,;\,x_2x_0=x_1x_2\rangle $, the structure group of a square-free solution with set $\{x_0,x_1,x_2\}$,  and $G \simeq  \mathbb{Z}^2 \rtimes \mathbb{Z}$.
The subgroup $N=\langle x_0^2,x_1^2,x_2^2 \rangle $  is normal and free abelian of rank $3$, and the quotient group of $G$ by $N$  is a finite group of order $8$  \cite{chou_godel2}. We do not get into the details of the construction of this kind of quotient group and refer the reader to \cite{chou_godel2}, and \cite{deh_coxeterlike}  for a generalisation. Let  $G(X,r)$  and $N(X,r)$ 
be the direct product of an infinite number of copies of $G$  and $N$,  respectively. So, 
 $G(X,r)$   is the structure group of a square-free  infinite solution, and  quotienting it by $N(X,r)$,    it has an infinite quotient group $W$  which is not abelian, since $W$ isomorphic to  the direct product of an infinite number of copies of  $(\mathbb{Z}_2 \times\mathbb{Z}_2)\rtimes \mathbb{Z}_2$.
  \end{itemize}
 To conclude, there are three Thompson's groups $F,T,V$,  with  $F\subset T \subset V$. There have been also  several generalisations of these groups. It would be interesting to know if there is some kind of solution 
such  that these groups are derived from them.
\section{Appendix}
\subsection{Appendix 1: Proofs of Lemma  \ref{lem-thompson-partial-sol}	$(vi)$ and Theorem \ref{theo-cycle-F}}
\begin{proof}[Proof of Lemma  \ref{lem-thompson-partial-sol}	$(vi)$  ]
	
We show by a case by case proof that the conditions from Lemma \ref{lem-formules-invol+braided-partial}$(ii)$ are satisfied. For brevity of notation, instead of $x_m$, $x_n$, $x_k$,  we write $m,n,k$
	
	$\sigma_m\sigma_n(k)=\left\{
	\begin{array}{lll}
	
	k           && k<n<m\\
	
	k           && k<m<n, \,n \neq m+1\\
	
	k-2    &&  m<n<k, \,k\neq n+1,m+2\\
	
	k-1    &&  m<k<n, \,n,k\neq m+1\\
	k-1    &&  n<k<m, \,k\neq n+1\\

	k-2    &&  n<m<k, \,k> m+2\\
	
	\text{not defined}    &&  n<m<k, \,k= m+1\\
\end{array} 
\right.$

$\sigma_{\sigma_m(n)}\sigma_{\gamma_n(m)}(k)=\left\{
\begin{array}{lll}

\sigma_n\sigma_{m+1}(k)=k           && k<n<m\\

\sigma_{n-1}\sigma_{m}(k)=k           && k<m<n, \,n \neq m+1\\

\sigma_{n-1}\sigma_{m}(k)=k-2    &&  m<n<k, \,k\neq n+1,m+2\\

\sigma_{n-1}\sigma_{m}(k)=k-1    &&  m<k<n, \,n,k\neq m+1\\

\sigma_{n}\sigma_{m+1}(k)=k-1    &&  n<k<m, \,k\neq n+1\\

\sigma_{n}\sigma_{m+1}(k)=k-2    &&  n<m<k, \,k> m+2\\

\sigma_{m}(k)=\text{not defined}    &&  n<m<k, \,k= m+1\\
\end{array} 
\right.$

$\sigma_{\gamma_{\sigma_n(k)}(m)}(\gamma_k(n))=\left\{
\begin{array}{lll}

\sigma_{m+1}\gamma_{k}(n)=n+1           && k<n<m\\

\sigma_{m+1}\gamma_{k}(n)=n           && k<m<n, \,n \neq m+1\\

\sigma_{m}\gamma_{k}(n)=n-1   &&  m<n<k, \,k\neq n+1,m+2\\

\sigma_{m}\gamma_{k}(n)=n   &&  m<k<n, \,n,k\neq m+1\\

\sigma_{m+1}\gamma_{k}(n)=n    &&  n<k<m, \,k\neq n+1\\

\sigma_{m}\gamma_{k}(n)=n   &&  n<m<k, \,k> m+2\\

\sigma_{m}\gamma_{k}(n)=n  &&  n<m<k, \,k= m+1\\
\end{array} 
\right.$\\

$\gamma_{\sigma_{\gamma_n(m)}(k)}(\sigma_m(n))=\left\{
\begin{array}{lll}

\gamma_{k}\sigma_{m}(n)=n+1         && k<n<m\\

\gamma_{k}\sigma_{m}(n)=n       && k<m<n, \,n \neq m+1\\

\gamma_{k-1}\sigma_{m}(n)=n-1    &&  m<n<k, \,k\neq n+1,m+2\\

\gamma_{k-1}\sigma_{m}(n)=n    &&  m<k<n, \,n,k\neq m+1\\

\gamma_{k}\sigma_{m}(n)=n   &&  n<k<m, \,k\neq n+1\\

\gamma_{k-1}\sigma_{m}(n)=n  &&  n<m<k, \,k> m+2\\

\gamma_{k-1}\sigma_{m}(n)=n  &&  n<m<k, \,k= m+1\\
\end{array} 
\right.$

The third relation is proved in the same way.
\end{proof}
\begin{proof}[Proof of Theorem \ref{theo-cycle-F}]
	For every $x_n,x_k \in X$, let 
	\[x_n \star x_k=x_{\sigma_n^{-1}(k)}\] with   $\sigma^{-1}_n: X \rightarrow X \setminus\{x_{n+1}\}$ the map defined by:
	
	\begin{equation}\label{defn-fcycle-set-F}
	\sigma^{-1}_n(k) = \left\{
	\begin{array}{lll}
	
	k              &&  k \leq n\\
	k+1    &&  k\geq n+1\\
	\end{array} \right. 
	\end{equation}

	Le $ x_m, x_n,x_k\in X$.  We show $(x_n \star x_k)\star (x_n \star x_m)=(x_k\star x_n)\star (x_k\star x_m)$. Instead of $x_m$, $x_n$, $x_k$,  we write $m,n,k$. So, we need to show 
	$\sigma^{-1}_{\sigma^{-1}_n(k)}  \sigma^{-1}_n(m) = \sigma^{-1}_{\sigma^{-1}_k(n)}  \sigma^{-1}_k(m)$. \\
	
	$\sigma^{-1}_{\sigma^{-1}_n(k)}  \sigma^{-1}_n(m)=\left\{
	\begin{array}{lll}
	
	\sigma^{-1}_{k}(m)=m       && k\leq n, \,m\leq n,\,m\leq k\\
	
	\sigma^{-1}_{k}(m)=m+1       && k\leq n, \,m\leq n,\,m> k\\

	\sigma^{-1}_{k}(m+1)=m+2       && k\leq n, \,m> n\\
	
	\sigma^{-1}_{k+1}(m)=m      && k>n, \,m\leq n\\

	\sigma^{-1}_{k+1}(m+1)=m+1      &&k>n, \,m> n, \,m\leq k\\

	\sigma^{-1}_{k+1}(m+1)=m+2      && k>n, \,m> n, \,m>k\\

\end{array} 
\right.$

$\sigma^{-1}_{\sigma^{-1}_k(n)}  \sigma^{-1}_k(m)=\left\{
\begin{array}{lll}

\left\{ \begin{array}{ccccccccccc}
	\sigma^{-1}_{n}(m)=m       &&&&&& k= n, \,m\leq n,\,m\leq k\\
	\sigma^{-1}_{n+1}(m)=m  &&&&&& k<n, \,m\leq n,\,m\leq k \\
\end{array}\right.\\

\sigma^{-1}_{n}(m+1)=m+1       && k\leq n, \,m\leq n,\,m> k\\

\left\{\begin{array}{llllll}
	\sigma^{-1}_{n}(m+1)=m+2       &&&& k= n, \,m> n\\
	\sigma^{-1}_{n+1}(m+1)=m+2      &&&& k<n, \,m> n \\
\end{array}\right.\\

\sigma^{-1}_{n}(m)=m                && k>n, \,m\leq n\\

\sigma^{-1}_{n}(m)=m+1      &&k>n, \,m> n, \,m\leq k\\

\sigma^{-1}_{n}(m+1)=m+2      && k>n, \,m> n, \,m>k\\
\end{array} 
\right.$

From its definition, the map $\sigma^{-1}_n: X \rightarrow X \setminus\{x_{n+1}\}$ is bijective for every $x_n \in X$  (a partial bijection of $X$). So, $(X,\star)$ is a cycle set.
The cycle set   $(X,\star)$ is square-free, since  $x_n \star x_n=x_n$, for all $x_n \in X$, indeed $x_n \star x_n=x_{\sigma^{-1}_n(n)}=x_n$,  and it is non-degenerate since  the map $x_n \mapsto x_n \star x_n=x_n$ is bijective,  for all $x_n \in X$.
\end{proof}
\subsection{Appendix 2: Case by case proof in  Theorem \ref{thm-2-precise} (for Theorem $2$)}

Let 
 $(\mathfrak{l}=\mathfrak{r})$  in $\mathcal{R}_{\rho}$.  Let   $x^{\epsilon}_{k}\in X\cup X^*$.  We show that $(\mathfrak{l'}=\mathfrak{r'})$  belongs to  $\mathcal{R}_{\rho}$ and $x_{r'}=x_r$. 
	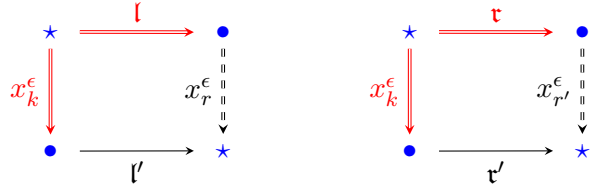
\begin{figure}[H]\label{fig-appendix}

		\begin{tikzpicture}
		\matrix (m) [matrix of math nodes,row sep=3em,column sep=4em,minimum width=2em,color=blue, ampersand replacement=\&]
		{ \star \& \bullet 	\\
		\bullet \&\star	\\};
		\path[-stealth]
		(m-1-1) edge [double,red] node [above] {$\mathfrak{l}$} (m-1-2)
		(m-1-1) edge [double,red]node [left] {$x^{\epsilon}_{k}$} (m-2-1)
			(m-1-2) edge [double, dashed] node [left] {$x^{\epsilon}_{r}$} (m-2-2)
		(m-2-1) edge   node [below] {$\mathfrak{l'}$} (m-2-2)
		;
		\end{tikzpicture} 
		\hspace{30pt}
			\begin{tikzpicture}
		\matrix (m) [matrix of math nodes,row sep=3em,column sep=4em,minimum width=2em,color=blue, ampersand replacement=\&]
		{ \star \& \bullet 	\\
			\bullet \&\star	\\};
		\path[-stealth]
		(m-1-1) edge [double,red] node [above] {$\mathfrak{r}$} (m-1-2)
		(m-1-1) edge [double,red]node [left] {$x^{\epsilon}_{k}$} (m-2-1)
		(m-1-2) edge [double, dashed] node [left] {$x^{\epsilon}_{r'}$} (m-2-2)
		(m-2-1) edge   node [below] {$\mathfrak{r'}$} (m-2-2)
		;
		\end{tikzpicture} 
		\caption{Computation of  $\mathfrak{l} \oplus x^{\epsilon}_{k}$ at left and $\mathfrak{r} \oplus x^{\epsilon}_{k}$ at right.}
	\end{figure}
There are six cases to check, from the form of the defining relations in $\mathcal{R}_{\rho}$ and $x^{\epsilon}_{k} \in \{x_{k}, x^{*}_{k}\}$. For the proof, we assume all  the diagrams can be completed, which  means that the relations needed to close squares  belong to $\mathcal{R}_\rho$ and that the  partial bijections  $\sigma_i, \gamma_i$ are   computed in their domain of definition. 

\textbf{Case 1:} \boldm{$\{\mathfrak{l}=x_ix_{\sigma_i^{-1}(j)}\;\;; \;\;\;  \mathfrak{r}=x_jx_{\sigma_j^{-1}(i)}\;\;;\;\;\; x_k\}$}:\\ 
\begin{figure}[h]\label{fig-case1}
	\scalebox{0.8}[0.9]{
	\begin{tikzpicture}
	\matrix (m) [matrix of math nodes,row sep=3em,column sep=4em,minimum width=2em,color=blue, ampersand replacement=\&]
	{ \star \& \bullet  \& \bullet	\\
		\bullet \& \bullet \& \star	\\};
	\path[-stealth]
	(m-1-1) edge [double,red] node [above] {$x_i$} (m-1-2)
		(m-1-2) edge [double,red] node [above] {$x_{\sigma_i^{-1}(j)}$} (m-1-3)
	(m-1-1) edge [double,red]node [left] {$x_{k}$} (m-2-1)
	(m-1-2) edge[dotted] node [left] {$x_{q}$} (m-2-2)
	(m-2-1) edge[] node[below]{$x_{p}$} (m-2-2)
	
		(m-1-3) edge [double, dashed]node [right] {$x_{\sigma^{-1}_{\sigma_i^{-1}(j)}\sigma_i^{-1}(k)}$} (m-2-3)
			(m-2-2) edge node [below] {$x_{s}$} (m-2-3)
	;
	\end{tikzpicture} }
	\hspace{30pt}
	\scalebox{0.8}[0.9]{	\begin{tikzpicture}
	\matrix (m) [matrix of math nodes,row sep=3em,column sep=4em,minimum width=2em,color=blue, ampersand replacement=\&]
	{ \star \& \bullet  \& \bullet	\\
		\bullet \& \bullet \& \star	\\};
	\path[-stealth]
(m-1-1) edge [double,red] node [above] {$x_j$} (m-1-2)
(m-1-2) edge [double,red] node [above] {$x_{\sigma_j^{-1}(i)}$} (m-1-3)
	(m-1-1) edge [double,red]node [left] {$x_{k}$} (m-2-1)
	(m-1-2) edge[dotted] node [left] {$x_{n}$} (m-2-2)
	(m-2-1) edge  node [below] {$x_{m}$} (m-2-2)
	
		(m-1-3) edge [double, dashed]node [right] {$x_{\sigma^{-1}_{\sigma_j^{-1}(i)}\sigma_j^{-1}(k)}$} (m-2-3)
		(m-2-2) edge node [below] {$x_{l}$} (m-2-3)
	;
	\end{tikzpicture} }
	\caption{Case 1:  $p=\sigma^{-1}_k(i)$, $q=\sigma^{-1}_i(k)$,   $m=\sigma^{-1}_k(j)$, $n=\sigma^{-1}_j(k)$,   {\boldm $l=\sigma^{-1}_{\sigma_j^{-1}(k)}\sigma_j^{-1}(i)$},  {\boldm$s=\sigma^{-1}_{\sigma_i^{-1}(k)}\sigma_i^{-1}(j)$}}
\end{figure}
From Lemma \ref{lem-defn-R}, the relations have the form $x_ix_{\sigma_i^{-1}(j)}=x_jx_{\sigma_j^{-1}(i)}$, so 
from Equation \ref{eqn-sigma-par}, $\sigma_i\sigma_{\sigma_i^{-1}(j)}(k)=\sigma_j\sigma_{\sigma_j^{-1}(i)}(k)$, for every $k$ in their domain of definition. This implies that  $r=\sigma^{-1}_{\sigma_i^{-1}(j)}\sigma_i^{-1}(k)$ and $r'=\sigma^{-1}_{\sigma_j^{-1}(i)}\sigma_j^{-1}(k)$ are equal.\\
 We show  that $\mathfrak{l'}=x_{p}\,x_{s}$ and $\mathfrak{r'}=x_{m}\,x_{l}$ are equal and belong to  $\mathcal{R}$. For that, we  show that  
 	$s=\sigma^{-1}_p(m)=\sigma^{-1}_{\sigma^{-1}_k(i)}\sigma^{-1}_k(j)$ and 	$l=\sigma^{-1}_m(p)=\sigma^{-1}_{\sigma^{-1}_k(j)}\sigma^{-1}_k(i)$.  This is true from  Equation \ref{eqn-sigma-par}, since 
$(x_ix_{\sigma^{-1}_i(k)}=x_kx_{\sigma^{-1}_k(i)})$ and 
$(x_jx_{\sigma^{-1}_j(k)}=x_kx_{\sigma^{-1}_k(j)})$  belong to $\mathcal{R}$.\\
\textbf{Case 2:} \boldm{$\{\mathfrak{l}=x_{\sigma_i^{-1}(j)}x^*_{\sigma_j^{-1}(i)}\;\;; \;\;\;  \mathfrak{r}=x_i^*x_j\;\;;\;\;\; x_k\}$}:\\ 
\begin{figure}[h]\label{fig-case2}
	\scalebox{0.8}[0.9]{
		\begin{tikzpicture}
		\matrix (m) [matrix of math nodes,row sep=3em,column sep=4em,minimum width=2em,color=blue, ampersand replacement=\&]
		{ \star \& \bullet  \& \bullet	\\
			\bullet \& \bullet \& \star	\\};
		\path[-stealth]
		(m-1-1) edge [double,red] node [above] {$x_{\sigma_i^{-1}(j)}$} (m-1-2)
		(m-1-2) edge [double,red] node [above] {$x^*_{\sigma_j^{-1}(i)}$} (m-1-3)
		(m-1-1) edge [double,red]node [left] {$x_{k}$} (m-2-1)
		(m-1-2) edge[dotted] node [left] {$x_{q}$} (m-2-2)
		(m-2-1) edge[] node[below]{$x_{p}$} (m-2-2)
		
		(m-1-3) edge [double, dashed]node [right] {$x_{\sigma_{\sigma_j^{-1}(i)}\sigma^{-1}_{\sigma_i^{-1}(j)}(k)}$} (m-2-3)
		(m-2-2) edge node [below] {$x^*_{s}$} (m-2-3)
		;
		\end{tikzpicture} }
	\hspace{30pt}
	\scalebox{0.8}[0.9]{	\begin{tikzpicture}
		\matrix (m) [matrix of math nodes,row sep=3em,column sep=4em,minimum width=2em,color=blue, ampersand replacement=\&]
		{ \star \& \bullet  \& \bullet	\\
			\bullet \& \bullet \& \star	\\};
		\path[-stealth]
		(m-1-1) edge [double,red] node [above] {$x_i^*$} (m-1-2)
		(m-1-2) edge [double,red] node [above] {$x_j$} (m-1-3)
		(m-1-1) edge [double,red]node [left] {$x_{k}$} (m-2-1)
		(m-1-2) edge[dotted] node [left] {$x_{n}$} (m-2-2)
		(m-2-1) edge  node [below] {$x^*_{m}$} (m-2-2)
		
		(m-1-3) edge [double, dashed]node [right] {$x_{\sigma^{-1}_j\sigma_i(k)}$} (m-2-3)
		(m-2-2) edge node [below] {$x_{l}$} (m-2-3)
		;
		\end{tikzpicture} }
	\caption{Case 2:  $q=\sigma^{-1}_{\sigma^{-1}_i(j)}(k)$, $n=\sigma_i(k)$, $m=\sigma^{-1}_n(i)$, 
	\boldm{$p=\sigma^{-1}_k\sigma^{-1}_i(j)$}, 	\boldm{$s=\sigma^{-1}_r\sigma^{-1}_j(i)$}
	}
\end{figure}
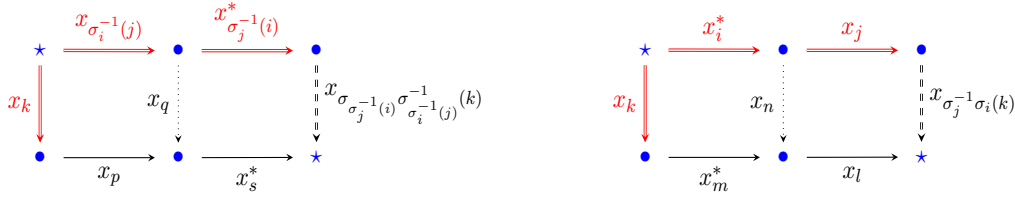
From Equation \ref{eqn-sigma-par},  $r=\sigma_{\sigma_j^{-1}(i)}\sigma^{-1}_{\sigma_i^{-1}(j)}(k)$ and $r'=\sigma^{-1}_j\sigma_i(k)$ are equal. We show that 
$\mathfrak{l'}\,=\,x_{p}x_s^*$ and $\mathfrak{r'}\,=\,x^*_{m}x_{l}$ are equal and belong to  $\mathcal{R}_{\rho}$, which is equivalent to 
$x_{m}\,x_{p}\,=\, x_{l}x_s$ belongs to $\mathcal{R}$. For that, we show  $p=\sigma^{-1}_{m}(l)=\sigma^{-1}_m\sigma^{-1}_n(j)$ and $s=\sigma^{-1}_{l}(m)=\sigma^{-1}_l\sigma^{-1}_n(i)$.  This is true, from Equation \ref{eqn-sigma-par}, since $(x_ix_k=x_nx_m)$ and $(x_jx_r=x_nx_l)$ in
$\mathcal{R}$.\\

\textbf{Case 3:} \boldm{$\{\mathfrak{l}=x^*_{\sigma_i^{-1}(j)}x_i^*\;\;; \;\;\;  \mathfrak{r}=x^*_{\sigma_j^{-1}(i)}x_j^*\;\;;\;\;\; x_k\}$}:\\ 
\begin{figure}[h]\label{fig-case3}
	\scalebox{0.8}[0.9]{
		\begin{tikzpicture}
		\matrix (m) [matrix of math nodes,row sep=3em,column sep=4em,minimum width=2em,color=blue, ampersand replacement=\&]
		{ \star \& \bullet  \& \bullet	\\
			\bullet \& \bullet \& \star	\\};
		\path[-stealth]
		(m-1-1) edge [double,red] node [above] {$x^*_{\sigma_i^{-1}(j)}$} (m-1-2)
		(m-1-2) edge [double,red] node [above] {$x_i^*$} (m-1-3)
		(m-1-1) edge [double,red]node [left] {$x_{k}$} (m-2-1)
		(m-1-2) edge[dotted] node [left] {$x_q$} (m-2-2)
		(m-2-1) edge[] node[below]{$x^*_{p}$} (m-2-2)
		
		(m-1-3) edge [double, dashed]node [right] {$x_{\sigma_i\sigma_{\sigma_i^{-1}(j)}(k)}$} (m-2-3)
		(m-2-2) edge node [below] {$x^*_s=x^*_{\sigma^{-1}_r(i)}$} (m-2-3)
		;
		\end{tikzpicture} }
	\hspace{30pt}
	\scalebox{0.8}[0.9]{	\begin{tikzpicture}
		\matrix (m) [matrix of math nodes,row sep=3em,column sep=4em,minimum width=2em,color=blue, ampersand replacement=\&]
		{ \star \& \bullet  \& \bullet	\\
			\bullet \& \bullet \& \star	\\};
		\path[-stealth]
		(m-1-1) edge [double,red] node [above] {$x^*_{\sigma_j^{-1}(i)}$} (m-1-2)
		(m-1-2) edge [double,red] node [above] {$x^*_j$} (m-1-3)
		(m-1-1) edge [double,red]node [left] {$x_{k}$} (m-2-1)
		(m-1-2) edge[dotted] node [left] {$x_n$} (m-2-2)
		(m-2-1) edge  node [below] {$x^*_{m}$} (m-2-2)
		
		(m-1-3) edge [double, dashed]node [right] {$x_{\sigma_j\sigma_{\sigma^{-1}_j(i)}(k)}$} (m-2-3)
		(m-2-2) edge node [below] {$x^*_l=x^*_{\sigma^{-1}_{r'}(j)}$} (m-2-3)
		;
		\end{tikzpicture} }
	\caption{Case 3:  $s=\sigma^{-1}_r(i)$, $l=\sigma^{-1}_r(j)$,
	\boldm{$p=\sigma^{-1}_q\sigma^{-1}_i(j)$}, 
		\boldm{$m=\sigma^{-1}_n\sigma^{-1}_j(i)$} 
	}
\end{figure}
From 
Equation \ref{eqn-sigma-par},  $r=\sigma_i\sigma_{\sigma_i^{-1}(j)}(k)$ and $r'=\sigma_j\sigma_{\sigma_j^{-1}(i)}(k)$ are equal.
We show that 
$\mathfrak{l'}\,=\,x_p^*x^*_{s}$ and $\mathfrak{r'}\,=\,x^*_m x^*_{l}\,=\,x^*_m x^*_{\sigma^{-1}_{r}(j)}$ are equal and belong to  $\mathcal{R}_{\rho}$, which is equivalent to 
$(x_{s}x_p\,=\, x_{l}x_m)$ in  $\mathcal{R}$.
We show that  $p=\sigma^{-1}_s\sigma^{-1}_r(j)$ and 
$m=\sigma^{-1}_{l}\sigma^{-1}_{r}(i)$. Since $(x_ix_q=x_rx_s)$ and $(x_jx_n=x_rx_l)$ in  $\mathcal{R}$, $p=\sigma^{-1}_q\sigma^{-1}_i(j)=\sigma^{-1}_s\sigma^{-1}_r(j)$ and $m=\sigma^{-1}_n\sigma^{-1}_j(i)=\sigma^{-1}_{l}\sigma^{-1}_{r}(i)$, from Equation \ref{eqn-sigma-par}.\\

\textbf{Case 4:} \boldm{$\{\mathfrak{l}=x_ix_{\sigma_i^{-1}(j)}\;\;; \;\;\;  \mathfrak{r}=x_jx_{\sigma_j^{-1}(i)}\;\;;\;\;\; x_k^*\}$}:\\ 
\begin{figure}[h]\label{fig-case4}
	\scalebox{0.8}[0.9]{
		\begin{tikzpicture}
		\matrix (m) [matrix of math nodes,row sep=3em,column sep=4em,minimum width=2em,color=blue, ampersand replacement=\&]
		{ \star \& \bullet  \& \bullet	\\
			\bullet \& \bullet \& \star	\\};
		\path[-stealth]
		(m-1-1) edge [double,red] node [above] {$x_i$} (m-1-2)
		(m-1-2) edge [double,red] node [above] {$x_{\sigma_i^{-1}(j)}$} (m-1-3)
		(m-1-1) edge [double,red]node [left] {$x^*_{k}$} (m-2-1)
		(m-1-2) edge[dotted] node [left] {$x^*_{q}$} (m-2-2)
		(m-2-1) edge[] node[below]{$x_{p}$} (m-2-2)
		
		(m-1-3) edge [double, dashed]node [right] {$x^*_{r}$} (m-2-3)
		(m-2-2) edge node [below] {$x_{s}$} (m-2-3)
		;
		\end{tikzpicture} }
	\hspace{30pt}
	\scalebox{0.8}[0.9]{	\begin{tikzpicture}
		\matrix (m) [matrix of math nodes,row sep=3em,column sep=4em,minimum width=2em,color=blue, ampersand replacement=\&]
		{ \star \& \bullet  \& \bullet	\\
			\bullet \& \bullet \& \star	\\};
		\path[-stealth]
		(m-1-1) edge [double,red] node [above] {$x_j$} (m-1-2)
		(m-1-2) edge [double,red] node [above] {$x_{\sigma_j^{-1}(i)}$} (m-1-3)
		(m-1-1) edge [double,red]node [left] {$x^*_{k}$} (m-2-1)
		(m-1-2) edge[dotted] node [left] {$x^*_{n}$} (m-2-2)
		(m-2-1) edge  node [below] {$x_{m}$} (m-2-2)
		
		(m-1-3) edge [double, dashed]node [right] {$x^*_{r'}$} (m-2-3)
		(m-2-2) edge node [below] {$x_{l}$} (m-2-3)
		;
		\end{tikzpicture} }
	\caption{Case 4:  $p=\sigma_k(i)$, $q=\sigma^{-1}_p(k)$,   $n=\sigma^{-1}_m(k)$, \boldm{$l=\sigma_n\sigma^{-1}_j(i)$}, \boldm{$s=\sigma_q\sigma^{-1}_i(j)$},  \boldm{$r=\sigma^{-1}_s\sigma^{-1}_p(k)\;,\; r'=\sigma^{-1}_l\sigma^{-1}_m(k)$ }}
\end{figure}
We show  that $\mathfrak{l'}=x_{p}\,x_{s}$ and $\mathfrak{r'}=x_{m}\,x_{l}$ are equal and belong to  $\mathcal{R}$. For that, we  show that  
$s=\sigma^{-1}_p(m)=\sigma^{-1}_{p}\sigma_k(j)$ and 	$l=\sigma^{-1}_m(p)=\sigma^{-1}_{m}\sigma_k(i)$.  This is true from  Equation \ref{eqn-sigma-par}, since 
$(x_kx_i=x_px_q)$ and $(x_kx_j=x_mx_n)$ in  $\mathcal{R}$. So, $x_{p}\,x_{s}=x_{m}\,x_{l}$ and this implies 
$r=\sigma^{-1}_s\sigma^{-1}_p(k)$, $r'=\sigma^{-1}_l\sigma^{-1}_m(k)$  are equal.\\

\textbf{Case 5:} \boldm{$\{\mathfrak{l}=x_{i}x^*_j\;\;; \;\;\;  \mathfrak{r}=x^*_{\gamma^{-1}_i(j)}x_{\gamma^{-1}_j(i)}\;\;;\;\;\; x_k^*\}$}:\\ 
\begin{figure}[h]\label{fig-case5}
	\scalebox{0.8}[0.9]{
		\begin{tikzpicture}
		\matrix (m) [matrix of math nodes,row sep=3em,column sep=4em,minimum width=2em,color=blue, ampersand replacement=\&]
		{ \star \& \bullet  \& \bullet	\\
			\bullet \& \bullet \& \star	\\};
		\path[-stealth]
		(m-1-1) edge [double,red] node [above] {$x_{i}$} (m-1-2)
		(m-1-2) edge [double,red] node [above] {$x^*_{j}$} (m-1-3)
		(m-1-1) edge [double,red]node [left] {$x^*_{k}$} (m-2-1)
		(m-1-2) edge[dotted] node [left] {$x^*_{q}$} (m-2-2)
		(m-2-1) edge[] node[below]{$x_{p}$} (m-2-2)
		
		(m-1-3) edge [double, dashed]node [right] {$x_r^*$} (m-2-3)
		(m-2-2) edge node [below] {$x^*_{s}$} (m-2-3)
		;
		\end{tikzpicture} }
	\hspace{30pt}
	\scalebox{0.8}[0.9]{	\begin{tikzpicture}
		\matrix (m) [matrix of math nodes,row sep=3em,column sep=4em,minimum width=2em,color=blue, ampersand replacement=\&]
		{ \star \& \bullet  \& \bullet	\\
			\bullet \& \bullet \& \star	\\};
		\path[-stealth]
		(m-1-1) edge [double,red] node [above] {$x^*_{\gamma^{-1}_i(j)}$} (m-1-2)
		(m-1-2) edge [double,red] node [above] {$x_{\gamma^{-1}_j(i)}$} (m-1-3)
		(m-1-1) edge [double,red]node [left] {$x^*_{k}$} (m-2-1)
		(m-1-2) edge[dotted] node [left] {$x^*_{n}$} (m-2-2)
		(m-2-1) edge  node [below] {$x^*_{m}$} (m-2-2)
		
		(m-1-3) edge [double, dashed]node [right] {$x_{r'}^*$} (m-2-3)
		(m-2-2) edge node [below] {$x_{l}$} (m-2-3)
		;
		\end{tikzpicture} }
	\caption{Case 5: 
		$q=\gamma_i(k)$, $p=. \gamma^{-1}_q(i)$, 
		 $n=\gamma^{-1}_{\gamma^{-1}_i(j)}(k)$,
		 $s=\gamma_q^{-1}(j)$,
			\boldm{ $l=\gamma^{-1}_{r'}\gamma^{-1}_j(i)$},
			\boldm{$m=\gamma^{-1}_k\gamma^{-1}_i(j)$},
			\boldm{$r=\gamma^{-1}_j(q) \;,\;
	r'=\gamma_{\gamma^{-1}_j(i)}(n)$}}
\end{figure}
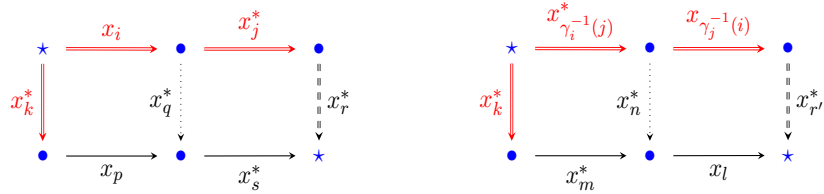
Since  $r=\gamma^{-1}_j(q)=\gamma^{-1}_j\gamma_i(k)$ and 
$r'=\gamma_{\gamma^{-1}_j(i)}(n)=\gamma_{\gamma^{-1}_j(i)}\gamma^{-1}_{\gamma^{-1}_i(j)}(k)$, we have $r=r'$ from Equation \ref{eqn-gamma-par}. We show that 
$\mathfrak{l'}\,=\,x_{p}x_s^*$ and $\mathfrak{r'}\,=\,x^*_{m}x_{l}$ are equal and belong to  $\mathcal{R}_{\rho}$, which is equivalent to 
$(x_{m}\,x_{p}\,=\, x_{l}x_s)$ in  $\mathcal{R}$.
For that, we  show that $l=\gamma^{-1}_{s}(p)=\gamma^{-1}_{s}\gamma^{-1}_{q}(i)$ and $m=\gamma^{-1}_{p}(s)=\gamma^{-1}_{p}\gamma^{-1}_{q}(j)$.  This is true, from Equation \ref{eqn-sigma-par}, since $(x_kx_i=x_px_q)$ and $(x_{r}x_j=x_sx_q)$ in 
$\mathcal{R}$.\\

\textbf{Case 6:} \boldm{$\{\mathfrak{l}=x^*_ix^*_{\gamma_i^{-1}(j)}\;\;; \;\;\;  \mathfrak{r}=x_j^*x^*_{\gamma_j^{-1}(i)}\;\;;\;\;\; x_k^*\}$}:\\ 
\begin{figure}[h]\label{fig-case3}
	\scalebox{0.8}[0.9]{
		\begin{tikzpicture}
		\matrix (m) [matrix of math nodes,row sep=3em,column sep=4em,minimum width=2em,color=blue, ampersand replacement=\&]
		{ \star \& \bullet  \& \bullet	\\
			\bullet \& \bullet \& \star	\\};
		\path[-stealth]
		(m-1-1) edge [double,red] node [above] {$x^*_i$} (m-1-2)
		(m-1-2) edge [double,red] node [above] {$x^*_{\gamma_i^{-1}(j)}$} (m-1-3)
		(m-1-1) edge [double,red]node [left] {$x^*_{k}$} (m-2-1)
		(m-1-2) edge[dotted] node [left] {$x_q^*$} (m-2-2)
		(m-2-1) edge[] node[below]{$x^*_{p}$} (m-2-2)
		
		(m-1-3) edge [double, dashed]node [right] {$x^*_{\gamma^{-1}_{\gamma_i^{-1}(j)}\gamma^{-1}_{i}(k)}$} (m-2-3)
		(m-2-2) edge node [below] {$x^*_s$} (m-2-3)
		;
		\end{tikzpicture} }
	\hspace{30pt}
	\scalebox{0.8}[0.9]{	\begin{tikzpicture}
		\matrix (m) [matrix of math nodes,row sep=3em,column sep=4em,minimum width=2em,color=blue, ampersand replacement=\&]
		{ \star \& \bullet  \& \bullet	\\
			\bullet \& \bullet \& \star	\\};
		\path[-stealth]
		(m-1-1) edge [double,red] node [above] {$x_j^*$} (m-1-2)
		(m-1-2) edge [double,red] node [above] {$x^*_{\gamma_j^{-1}(i)}$} (m-1-3)
		(m-1-1) edge [double,red]node [left] {$x^*_{k}$} (m-2-1)
		(m-1-2) edge[dotted] node [left] {$x^*_n$} (m-2-2)
		(m-2-1) edge  node [below] {$x^*_{m}$} (m-2-2)
		
		(m-1-3) edge [double, dashed]node [right] {$x^*_{\gamma^{-1}_{\gamma_j^{-1}(i)}\gamma_j^{-1}(k)}$} (m-2-3)
		(m-2-2) edge node [below] {$x^*_{l}$} (m-2-3)
		;
		\end{tikzpicture} }
	\caption{Case 6:  $p=\gamma^{-1}_{k}(i)$, $q=\gamma^{-1}_{i}(k)$,  $m=\gamma_k^{-1}(j)$, 
	$n=\gamma_j^{-1}(k)$, \boldm{$s=\gamma^{-1}_q\gamma_i^{-1}(j)$},
\boldm{$l=\gamma^{-1}_{n}\gamma_j^{-1}(i)$}}
\end{figure}
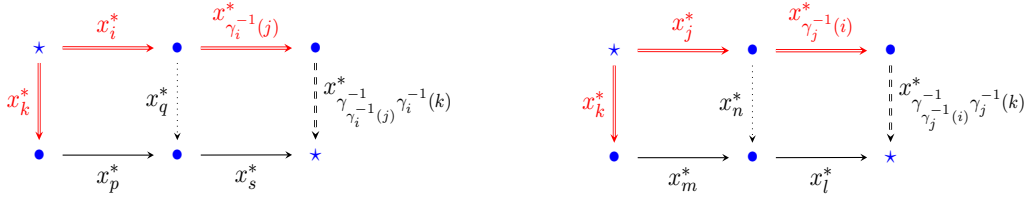
From Lemma \ref{lem-defn-R}, the relations have the form $x_{\gamma_i^{-1}(j)}x_i=x_{\gamma_j^{-1}(i)}x_j$, so 
from Equation \ref{eqn-gamma-par},
$\gamma_i\gamma_{\gamma_i^{-1}(j)}(k)=\gamma_j\gamma_{\gamma_j^{-1}(i)}(k)$, for every $k$ in their domain of definition. This implies $r=\gamma^{-1}_{\gamma_i^{-1}(j)}\gamma^{-1}_{i}(k)$ and  $r'=\gamma^{-1}_{\gamma_j^{-1}(i)}\gamma_j^{-1}(k)$ are equal. We show  $\mathfrak{l'}=x^*_p \,x^*_s$
and $\mathfrak{r'}=x^*_mx^*_l$  are equal  and belong to $\mathcal{R}_\rho$, that is  $(x_sx_p=x_lx_m)$ in $\mathcal{R}$. For that, we show $s=\gamma^{-1}_{p}(m)=\gamma^{-1}_p\gamma^{-1}_{k}(j)$ and 
$l=\gamma^{-1}_m(p)=\gamma^{-1}_m\gamma^{-1}_k(i)$. This  is true from Equation \ref{eqn-gamma-par}, since 
$(x_qx_i=x_px_k)$ and $(x_nx_j=x_mx_k)$ in  $\mathcal{R}$.


\bigskip\bigskip\noindent
{ Fabienne Chouraqui}

\smallskip\noindent
University of Haifa at Oranim, Israel.

\smallskip\noindent
E-mail: {\tt fabienne.chouraqui@gmail.com} \\

                {\tt fchoura@sci.haifa.ac.il}

\begin{thebibliography}{100}
\bibitem{bachiller0}D. Bachiller, \emph{Study of the algebraic structure of left braces and the Yang-Baxter equation}, Ph.D. thesis 2016, Universitat Autonoma de Barcelona.
\bibitem{bachiller1} D. Bachiller, \emph{Counterexample to a conjecture about braces}, J. Algebra {\bf 453} (2016), 160-176.
\bibitem{bachiller2} D. Bachiller, \emph{Extensions, matched products and simple braces}, J. Pure Appl. Algebra {\bf 222} (2018) 1670-1691.

\bibitem{belk}J. Belk, \emph{Thompson's group $F$}, Ph.D. Thesis (Cornell University), 
ArXiv 0708.3609.
\bibitem{partial-s}V. Bergelson, N. Hindman, A. Blass, \emph{Partition Theorems for Spaces of Variable Words}, Proc. London Math. Soc. {\bf 68} (1994), n.3, 449-476.
\bibitem{brin}M.G. Brin, C.C.  Squier, \emph{Groups of piecewise linear homeomorphisms of the real line}, Invent. Math. {\bf 79}(1985), n. 3, 485-498.
\bibitem{brown-geo}K.S. Brown, R. Geoghegan, \emph{An infinite-dimesional torsion-free $FP_\infty$  group}, Invent. Math. {\bf 77} (1984), 367-381.

\bibitem{canon} J.W. Canon, W.J. Floyd, W.R. Parry,  \emph{Introductory notes to Richard Thompson's groups}, L'enseignement Mathematique {\bf 42}(1996), 215-256.
\bibitem{catino-cycle2} M. Castelli, F. Catino, G. Pinto, \emph{A new family of set-theoretic solutions of the
Yang-Baxter equation}, Comm. Algebra {\bf 46} (2018), 1622-1629.
\bibitem{catino1}F. Catino, I. Colazzo, P. Stefanelli, \emph{Skew left braces with non-trivial annihilator}, to appear J.Algebra Appl. 
\bibitem{catino3} F. Catino, I. Colazzo, P. Stefanelli, \emph{Semi-braces and the Yang-Baxter equation}, J.
Algebra {\bf 483} (2017), 163-187.
\bibitem{catino4} F. Catino, M. Mazzotta, P. Stefanelli, \emph{Inverse Semi-braces and the Yang-Baxter equation}, ArXiv 2007.05730.
\bibitem{catino-cycle} F. Catino, M.M. Miccoli, \emph{Construction of quasi-linear left cycle sets}, J. Algebra Appl. {\bf 14}, no.1, (2015).

\bibitem{cedo} F. Cedo, E. Jespers,  A. del Rio, \emph{Involutive Yang-Baxter Groups}, Trans.
Amer.  Math.  Soc.  {\bf 362} (2010), 2541-2558.
\bibitem{jespers-adv} F. Cedo, E. Jespers, J. Okninski, \emph{Retractability of set theoretic solutions
	of the Yang-Baxter equation},  Advances in Mathematics  {\bf 224} (2010), 2472-2484.
\bibitem{brace} F. Cedo, E. Jespers, J. Okninski, \emph{Braces and the Yang-Baxter equation}, Comm. Math. Phys. {\bf 327} (2014), 101-116.
\bibitem{brace-ag}F. Cedo, T. Gateva-Ivanova, A. Smoktunowicz, \emph{On the Yang-Baxter equation and left nilpotent left braces}, J. Pure App. Algebra {\bf 221}(2017), n.4, 751-756.
\bibitem{chou_art} F.~Chouraqui, \emph{Garside groups and the Yang-Baxter equation}, Comm. in Algebra {\bf 38} (2010) 4441-4460.
\bibitem{chou_godel1} F.~Chouraqui and E.~Godelle, \emph{Folding of set theoretical solutions of the Yang-Baxter Equation}, Algebra and Representation Theory {\bf 15 } (2012) 1277-1290.
\bibitem{chou_godel2} F.~Chouraqui and E.~Godelle, \emph{Finite quotients of $I$-type groups}, Adv. Math. { \bf 258} (2014), 46-68.
\bibitem{chou_aut} F. Chouraqui, \emph{An algorithmic construction  of group automorphisms and the quantum Yang-Baxter equation}, Comm. in Algebra {\bf 46} (2018), n.11,  4710-4723.
\bibitem{chou_left_garside} F. Chouraqui, \emph{Left orders in Garside groups},  Int. J. of    
                  Alg.  and Comp. (2016), vol.26, n.7,  1349-1359.
                  
                  
                  \bibitem{clifford} A.H. Clifford, G.B. Preston, \emph{The Algebraic Theory of Semigroups},  Math. Surveys of the American Math. Soc. {\bf 7}, 1967.
                  
 \bibitem{deh_torsion}P. Dehornoy, \emph{Gaussian groups are torsion free}, J. of Algebra {\bf 210} (1998) 291-297.           
 \bibitem{DePa}  P.~Dehornoy and L.~Paris, \emph{Gaussian groups and Garside groups, two generalisations of Artin groups}, Proc. London Math Soc.  {\bf 79} (1999) 569-604.      
\bibitem{dehornoy}P. Dehornoy, \emph{Groupes de Garside}, Ann. Scient. Ec.. Norm. Sup. {\bf 35} (2002), 
267-306.
\bibitem{dehornoy2}P. Dehornoy, \emph{The subword reversing method}, Int. J. of Alg. and Comp.{\bf 21 } (2011),  n.1,    71-118.
\bibitem{deh-digne}P. Dehornoy, F. Digne and J. Michel, \emph{Garside families and Garside germs}, J. Algebra {\bf 380} (2013), 109-145. 
\bibitem{deh_coxeterlike} P.~Dehornoy, \emph{Coxeter-like groups for groups of set-theoretic solutions of the Yang-Baxter equation}, Comptes Rendus Mathematiques {\bf 351} (2013) 419-424.

\bibitem{garside1} P. Dehornoy, F. Digne, E. Godelle, D. Krammer, J. Michel, \emph{Foundations of Garside theory}, EMS Tracts in Mathematics (2015), volume 24.









\bibitem{drinf} V.G. Drinfeld, \emph{On some unsolved problems in quantum group theory}, Lec. Notes Math. {\bf 1510} (1992) 1-8.


\bibitem{etingof} P.~Etingof, T.~Schedler, A.~Soloviev, \emph{Set-theoretical solutions to the Quantum Yang-Baxter equation}, Duke Math. J. {\bf 100} (1999) 169-209.
\bibitem{etingof2} P. Etingof, R. Guralnick, A. Soloviev, \emph{Indecomposable set-theoretical solutions to the quantum Yang-Baxter equation on a set with a prime number of elements}, J. Algebra {\bf 249} (2001), 709-719.

\bibitem{gateva_conj}   T. Gateva-Ivanova, \emph{Regularity of skew-polynomials rings with binomial relations}, Talk at the International Algebra Conference, Miskolc, Hungary, 1996.
\bibitem{gateva_van} T.~Gateva-Ivanova and M.~Van~den~Bergh, \emph{Semigroups of $I$-type}, J.~Algebra { \bf 206 } (1998) 97-112.
\bibitem{gat_je-o}T. Gateva-Ivanova,  E. Jespers, J. Okninski,  \emph{Quadratic  algebras  of  skew
	type  and  the  underlying  semigroups},  J.  Algebra  {\bf 270}  (2003),  no. 2,  635-659.
\bibitem{gateva-phys1}  T. Gateva-Ivanova, \emph{A combinatorial approach to the set-theoretic solutions of the Yang–Baxter equation}, 
J. Math. Phys. {\bf 45} (2004),  3828-3858.

\bibitem{gateva}   T. Gateva-Ivanova, \emph{Garside Structures on Monoids with Quadratic Square-Free Relations}, Algebra and Representation Theory { \bf 14 } (2011) 779-802.

\bibitem{gateva_new}   T. Gateva-Ivanova. \emph{Set-theoretic solutions of the Yang–Baxter equation, braces and symmetric groups},  Adv. Math. {\bf 388 } (2018), n.7, 649-701.
\bibitem{gateva-phys2} T. Gateva-Ivanova and  P.J. Cameron, \emph{Multipermutation solutions of the Yang–Baxter equation},  Comm. in Math. Phys. {\bf 309}(3) (2012) 583-621. 


\bibitem{g-vendra1} L. Guarnieri, L. Vendramin, \emph{Skew braces and the Yang-Baxter equation}, Math. Comp. {\bf 86} (2017), 2519-2534.


\bibitem{jespers_i-type} E.~Jespers, J.~Okninski, \emph{Monoids and groups of I-type}, Algebra Rep. Theory {\bf 8}(2005), 709-729.

\bibitem{jespers_book} E.~Jespers, J.~Okninski, \emph{Noetherian Semigroup Algebras}, Algebra and applications, vol.7 (2007).
\bibitem{jespers-seul}E. Jespers, \emph{Groups and set-theoretic solutions of the Yang Baxter equation}, Note Mat. {\bf 30} (2010)  n. 1, 9-20.


 \bibitem{lawson} M.V. Lawson, \emph{Inverse semigroups:  The Theory of Partial Symmetries}, World Scientific, 1998.
 
 \bibitem{lebed2} V. Lebed, L. Vendramin, \emph{Homology of left non-degenerate set-theoretic solutions to the Yang-Baxter equation}, Advances Math.  {\bf 304} (2017), 1219-1261.
 
 \bibitem{lebed1} V. Lebed, L. Vendramin, \emph{ On Structure Groups of Set-Theoretic Solutions to the Yang-Baxter Equation}, Proc. Edinb. Math. Soc. {\bf 62 }(2019), n. 3, 683-717.
 
 
 
 \bibitem{mckenzie}R. MacKenzie, R.J. Thompson, \emph{An elementary construction of unsolvable word problems in group theory}, Word Problems:  (W.W. Boone, F.B. Canonito,  and R.C. Lyndon eds.), Studies in Logic and the foundations of Mathematics, vol.71, North-Holland, Amsterdam, 1973, p. 457-478.
 
 \bibitem{moore} G. Moore, N. Seiberg, \emph{Polynomial equations for rational conformal field theories}, Physics Letters {\bf 212 } (1988), n.4, 451-460.
\bibitem{preston}G.B. Preston, \emph{Inverse semi-groups}, J. of the London Math. Soc. {\bf 29}  (1954), n. 4,  396-403. 
 

\bibitem{rump}W. Rump, \emph{A decomposition theorem for square-free unitary solutions of the quantum Yang-Baxter equation}, Adv. Math. {\bf 193}(2005), 40-55.
\bibitem{rump-braces3} W. Rump, \emph{Modules over braces}, Algebra Dicrete Math. (2006), 127-137.
\bibitem{rump_braces} W. Rump,  \emph{Braces, radical rings, and the quantum Yang-Baxter equation}, J. Algebra {\bf 307}(2007), 153-170.
\bibitem{rump_braces2} W. Rump,  \emph{Classification of cyclic braces}, J. Pure Appl.Algebra {\bf 209}(2007), 671-685.



\bibitem{smot}A. Smoktunowicz, \emph{On Engel groups, nilpotent groups, rings, braces and the Yang-Baxter equation}, Trans. Amer. Math. Soc. {\bf 370} (2018), 6535-6564. 
\bibitem{s-vendra2}A. Smoktunowicz, L. Vendramin, \emph{On skew-braces},  J. Comb. Algebra {\bf 2 }(2018), no. 1, 47-86

\bibitem{stein} M. Stein, \emph{Groups of piecewise linear homeomorphisms}, Trans. Amer. Math. Soc {\bf 332} (1992), 477-514.
\bibitem{thompson}R.J. Thompson, \emph{Embeddings into finitely-generated simple groups which preserve the word problem}, Word Problems  II: The Oxford book (S.I. Adian, W.W. Boone and G. Higman eds.), Studies in Logic and the foundations of Mathematics, vol.95, North-Holland, Amsterdam, 1980, p. 401-441.
\bibitem{vendra-cycle} L. Vendramin, \emph{Extensions of set-theoretic solutions of the Yang-Baxter equation and a conjecture of Gateva-Ivanova},  
J. of Pure and Applied Alg. {\bf 220} (2016), n.5, 2064-2076.

\bibitem{wagner}V.V. Wagner, \emph{Generalised groups},  Proc. of the USSR Academy of Sciences (in Russian) {\bf 84}  (1952), 1119–1122 (there exists an English translation).


 \end{thebibliography}
\end{document}